\newtheorem{theorem}{Theorem}[section]
\newtheorem{corollary}[theorem]{Corollary}
\newtheorem{lemma}[theorem]{Lemma}
\newtheorem{proposition}[theorem]{Proposition}
\newtheorem{fact}[theorem]{Fact}
\theoremstyle{definition}
\newtheorem{definition}[theorem]{Definition}
\newtheorem{example}[theorem]{Example}
\newtheorem{remark}[theorem]{Remark}
\newcommand{\zz}{\mathbb{Z}}
\begin{document}
\title[Shifts of finite type with nearly full entropy]{Shifts of finite type with nearly full entropy}
\begin{abstract}
For any fixed alphabet $A$, the maximum topological entropy of a $\mathbb{Z}^d$ subshift with alphabet $A$ is obviously $\log |A|$. We study the class of nearest neighbor $\mathbb{Z}^d$ shifts of finite type which have topological entropy very close to this maximum, and show that they have many useful properties. Specifically, we prove that for any $d$, there exists $\beta_d$ such that for any nearest neighbor $\mathbb{Z}^d$ shift of finite type $X$ with alphabet $A$ for which $(\log |A|) - h(X) < \beta_d$, $X$ has a unique measure of maximal entropy $\mu$. Our values of $\beta_d$ decay polynomially (like $O(d^{-17})$), and we prove that the sequence must decay at least polynomially (like $d^{-0.25+o(1)}$). We also show some other desirable properties for such $X$, for instance that the topological entropy of $X$ is computable and that $\mu$ is isomorphic to a Bernoulli measure. Though there are other sufficient conditions in the literature (see \cite{BS1}, \cite{H}, \cite{MP}) which guarantee a unique measure of maximal entropy for $\mathbb{Z}^d$ shifts of finite type, this is (to our knowledge) the first such condition which makes no reference to the specific adjacency rules of individual letters of the alphabet. 
\end{abstract}
\date{}
\author{Ronnie Pavlov}
\address{Ronnie Pavlov\\
Department of Mathematics\\
University of Denver\\
2360 S. Gaylord St.\\
Denver, CO 80208}
\email{rpavlov@du.edu}
\urladdr{www.math.du.edu/$\sim$rpavlov/}
\thanks{}
\keywords{$\mathbb{Z}^d$; shift of finite type; sofic; multidimensional}
\renewcommand{\subjclassname}{MSC 2000}
\subjclass[2000]{Primary: 37B50; Secondary: 37B10, 37A15}
\maketitle

\section{Introduction}
\label{intro}

A dynamical system consists of a space $X$ endowed with some sort of structure, along with a $G$-action $(T_g)$ on the space for some group $G$ which preserves that structure. (For our purposes, $G$ will always be $\mathbb{Z}^d$ for some $d$.) Two examples are measurable dynamics, where $X$ is a probability space and $T_g$ is a measurable family of measure-preserving maps, and topological dynamics, where $X$ is a compact space and the $T_g$ are a continuous family of homeomorphisms. In each setup, when $G$ is an amenable group, (a class of groups which includes $G = \mathbb{Z}^d$) there is an invaluable notion of entropy; measure-theoretic, or metric, entropy in the setup of measurable dynamics, and topological entropy in the setup of topological dynamics. (We postpone rigorous definitions of these and other terms until Section~\ref{defns}.) These two notions are related by the famous Variational Principle, which says that the topological entropy of a topological dynamical system is the supremum of the measure-theoretic entropy over all $(T_g)$-invariant Borel probability measures supported in it. If in addition the system is taken to be expansive, then this supremum is achieved for at least one measure, and any such measures are called measures of maximal entropy. A question of particular interest is when a system supports a unique measure of maximal entropy. This is closely related to the concept of a phase transition in statistical physics, which occurs when a system supports multiple Gibbs measures. (See \cite{H2} for a discussion of this relationship.)

One particular class of topological dynamical systems for which measures of maximal entropy are well-understood are the one-dimensional shifts of finite type, or SFTs. A one-dimensional shift of finite type is defined by a finite set $A$, called the alphabet, and a finite set $\mathcal{F}$ of forbidden words, or finite strings of letters from $A$. The shift of finite type $X$ induced by $\mathcal{F}$ then consists of all $x \in A^{\mathbb{Z}}$ (biinfinite strings of letters from $A$) which do not contain any of the forbidden words from $\mathcal{F}$. The space $A^{\mathbb{Z}}$ is endowed with the (discrete) product topology, and $X$ inherits the induced topology, under which it is a compact metrizable space. The dynamics of a shift of finite type are always given by the $\mathbb{Z}$-action of integer shifts on sequences in $X$. Any one-dimensional SFT which satisfies a mild mixing condition called irreducibility has a unique measure of maximal entropy called the Parry measure, which is just a Markov chain with transition probabilities which can be algorithmically computed. For more on one-dimensional shifts of finite type and their measures of maximal entropy, see \cite{LM}.

Even for these relatively simple models, things become more complicated when one moves to multiple dimensions. A $d$-dimensional SFT is defined analogously to the one-dimensional case: specify the alphabet $A$ and finite set of forbidden ($d$-dimensional) finite configurations $\mathcal{F}$, and define a shift of finite type $X$ induced by $\mathcal{F}$ to be the set of all $x \in A^{\mathbb{Z}^d}$ (infinite $d$-dimensional arrays of letters from $A$) which do not contain any of the configurations from $\mathcal{F}$. The dynamics are now given by the $\mathbb{Z}^d$-action of all shifts by vectors in $\mathbb{Z}^d$. The easiest class of $d$-dimensional SFTs to work with are the nearest neighbor SFTs; a $d$-dimensional SFT $X$ is called nearest neighbor if $\mathcal{F}$ consists entirely of adjacent pairs of letters, meaning that a point's membership in $X$ is based solely on rules about which pairs of adjacent letters are legal in each cardinal direction. A useful illustrative example of a nearest neighbor SFT is the $d$-dimensional hard-core shift $\mathcal{H}_d$, defined by $A = \{0,1\}$, and $\mathcal{F}$ consisting of all configurations made of adjacent pairs of $1$s (in each of the $d$ cardinal directions). Then $X$ consists of all ways of assigning $0$ and $1$ to each site in $\mathbb{Z}^d$ which do not contain two adjacent $1$s.

It turns out that many questions regarding $d$-dimensional SFTs are extremely difficult or intractable. For instance, given only the alphabet $A$ and forbidden list $\mathcal{F}$, the question of whether or not $X$ is even nonempty is algorithmically undecidable! (\cite{Be}, \cite{Wan}) The structure of the set of measures of maximal entropy for multidimensional SFTs is similarly murky; it has been shown, for instance, that not even the strongest topological mixing properties, which are often enough to preclude some of the difficulties found in $d$-dimensional SFTs, imply uniqueness of the measure of maximal entropy. (\cite{BS0}) Even when the measure of maximal entropy is unique, its structure is not necessarily as simple as in the one-dimensional case: it may be a Bernoulli measure (for instance in the case when the SFT is all of $A^{\mathbb{Z}^d}$), but there also exist examples where it is not even measure-theoretically weak mixing. (\cite{BS3})

There are existing conditions in the literature which guarantee uniqueness of the measure of maximal entropy, but many of these require quite strong restrictions on the adjacency rules defining $X$. For instance, it was first shown in \cite{MP} (using the Dobrushin uniqueness criterion) that if the alphabet of a nearest neighbor $d$-dimensional SFT $X$ has a large enough proportion of letters called safe symbols, meaning that they may legally sit next to any letter of the alphabet in any direction, then $X$ has a unique measure of maximal entropy. It was later shown in \cite{H} that if all letters of the alphabet of a nearest neighbor $d$-dimensional SFT $X$ are only ``nearly safe,'' meaning that they can legally sit next to a large enough proportion of the letters in $A$ in any direction, then again $X$ has a unique measure of maximal entropy. Both of these conditions, though useful, have two problems. Firstly, they make reference to combinatorial information about the adjacency rules themselves, rather than more coarse topological information about the system itself. Secondly, they are not very robust conditions; if one takes an SFT satisfying one of these conditions, and then adds a single letter to $A$ with new adjacency rules which do not allow it to sit next to a large portion of $A$, then the conditions are no longer satisfied.

The main focus of this paper is to define a more robust, less combinatorial, condition on multidimensional SFTs which guarantees existence of a unique measure of maximal entropy. Our condition is similar in spirit to the previously mentioned one from \cite{H}, but rather than requiring every single letter of the alphabet to be ``nearly safe,'' i.e. allowed to sit next to a large proportion of the letters in $A$ in any direction, we require only that a large proportion of the letters of the alphabet are ``nearly safe'' in this sense. More specifically, call a nearest neighbor $d$-dimensional SFT $\epsilon$-full if there exists a subset of the alphabet of size at least $(1 - \epsilon)|A|$ consisting of letters which each have at least $(1 - \epsilon)|A|$ legal neighbors in each cardinal direction. Our main result is that for small enough $\epsilon$ (dependent on $d$), every $\epsilon$-full nearest neighbor $d$-dimensional SFT $X$ has a unique measure of maximal entropy $\mu$. We also prove several other desirable properties for such $X$, such as showing that the topological entropy of $X$ is a computable number and that $\mu$ is measure-theoretically isomorphic to a Bernoulli measure.

Somewhat surprisingly, it is easily shown that the $\epsilon$-fullness condition is implied by a condition which makes no mention of adjacency rules whatsoever, namely having topological entropy very close to the log of the alphabet size. Specifically, for any $\epsilon$, there exists $\beta$ for which any nearest neighbor $d$-dimensional SFT with entropy at least $(\log |A|) - \beta$ is $\epsilon$-full. This shows that all of the properties we prove for $\epsilon$-full nearest neighbor $d$-dimensional SFTs are shared by nearest neighbor $d$-dimensional SFTs with entropy close enough to the log of the alphabet size.

We now briefly summarize the layout of the rest of the paper. In Section~\ref{defns}, we give definitions and basic preliminary results required for our arguments. 
In Section~\ref{props}, we state and prove our main result. In Section~\ref{unrelated}, we show that that $\epsilon$-fullness is unrelated to any existing topological mixing conditions in the literature, i.e. it does not imply and is not implied by any of these conditions. 
Section~\ref{comparison} compares our condition with some other sufficient conditions for uniqueness of measure of maximal entropy from the literature, and in Section~\ref{critical}, we discuss the maximal values of $\beta_d$ which still guarantee uniqueness of measure of maximal entropy for all nearest neighbor $d$-dimensional SFTs with entropy at least $(\log |A|) - \beta_d$, in particular proving polynomially decaying upper and lower bounds on these values.


\section{Definitions and preliminaries}
\label{defns}

We begin with some geometric definitions for $\mathbb{Z}^d$. Throughout, $(\vec{e_i})$ represents the standard orthonormal basis of $\mathbb{Z}^d$. We use $d$ to denote the $\ell_{\infty}$ metric on points in $\mathbb{Z}^d$: $d(s,t) := \|s-t\|_{\infty} = \sum_i |s_i - t_i|$. For any sets $S,T \subseteq \mathbb{Z}^d$, we define $d(S,T) := \min_{s \in S, t \in T} d(s,t)$. We say that two sites $s,t \in \mathbb{Z}^d$ are \textbf{adjacent} if $d(s,t) = 1$. We also refer to adjacent sites as {\bf neighbors}, and correspondingly define the neighbor set $N_t$ of any $t \in \mathbb{Z}^d$ as the set of sites in $\mathbb{Z}^d$ adjacent to $t$.

This notion of adjacency gives $\mathbb{Z}^d$ a graph structure, and the notions of \textbf{paths} and \textbf{connected subsets} of $\mathbb{Z}^d$ are defined with this graph structure in mind. The {\bf outer boundary} of a set $S \subseteq \mathbb{Z}^d$, written $\partial S$, is the set of all $t \in \mathbb{Z}^d \setminus S$ adjacent to some $s \in S$. The {\bf inner boundary} of $S$, written $\underline{\partial} S$, is the set of all $s \in S$ adjacent to some $t \in \mathbb{Z}^d \setminus S$. A {\bf closed contour surrounding $S$} is any set of the form $\partial T$ for a connected set $T \subseteq \mathbb{Z}^d$ containing $S$.

\begin{definition}
For any finite alphabet $A$, the $\zz^d$ \textbf{full shift} over $A$ is the set $A^{\zz^d}$, which is viewed as a compact topological space with the (discrete) product topology.
\end{definition}

\begin{definition}
A \textbf{configuration} over $A$ is a member of $A^S$ for some finite $S \subset \zz^d$, which is said to have \textbf{shape} $S$. The set $\bigcup_{S \subset \zz^d, |S|<\infty} A^S$ of all configurations over $A$ is denoted by $A^*$. When $d=1$, a configuration whose shape is an interval of integers is sometimes referred to as a \textbf{word}.
\end{definition}

\begin{definition}
For two configurations $v \in A^S$ and $w \in A^T$ with $S \cap T = \varnothing$, the \textbf{concatenation} of $v$ and $w$, written $vw$, is the configuration on $S \cup T$ defined by $(vw)|_S = v$ and $(vw)|_T = w$. 
\end{definition}

\begin{definition}
The \textbf{$\zz^d$-shift action}, denoted by $\{\sigma_t\}_{t \in \zz^d}$, is the $\zz^d$-action on a full shift $A^{\zz^d}$ defined by $(\sigma_t x)(s) = x(s+t)$ for $s,t \in \zz^d$. 
\end{definition}

\begin{definition}
A \textbf{$\zz^d$ subshift} is a closed subset of a full shift $A^{\zz^d}$ which is invariant under the shift action.
\end{definition}

Each $\sigma_t$ is a homeomorphism on any $\zz^d$ subshift, and so any $\zz^d$ subshift, when paired with the $\zz^d$-shift action, is a topological dynamical system. An alternate definition for a $\zz^d$ subshift is in terms of disallowed configurations; for any set $\mathcal{F} \subset A^*$, one can define the set $X(\mathcal{F}) := \{x \in A^{\zz^d} \ : \ x|_S \notin \mathcal{F} \ \forall \textrm{ finite } S \subset \zz^d\}$. It is well known that any $X(\mathcal{F})$ is a $\zz^d$ subshift, and all $\zz^d$ subshifts are representable in this way. All $\zz^d$ subshifts are assumed to be nonempty in this paper.

\begin{definition}
A \textbf{$\zz^d$ shift of finite type (SFT)} is a $\zz^d$ subshift equal to $X(\mathcal{F})$ for some finite $\mathcal{F}$. If $\mathcal{F}$ is made up of pairs of adjacent letters, i.e. if $\mathcal{F} \subseteq \bigcup_{i=1}^d A^{\{0,\vec{e_i}\}}$, then $X$ is called a \textbf{nearest neighbor} $\zz^d$ SFT. 
\end{definition}

\begin{definition} 
The \textbf{language} of a $\zz^d$ subshift $X$, denoted by $L(X)$, is the set of all configurations which appear in points of $X$. For any finite $S \subset \zz^d$, $L_S(X) := L(X) \cap A^S$, the set of configurations in the language of $X$ with shape $S$. Configurations in $L(X)$ are said to be {\bf globally admissible}.
\end{definition}

\begin{definition}
A configuration $u \in A^S$ is \textbf{locally admissible} for a $\zz^d$ SFT $X = X(\mathcal{F})$ if $x|_T \notin \mathcal{F}$ for all $T \subseteq S$. In other words, $U$ is locally admissible if it does not contain any of the forbidden configurations for $X$. We denote by $LA(X)$ the set of all locally admissible configurations for $X$, and by $LA_S(X)$ the set $LA(X) \cap A^S$ for any finite $S \subset \mathbb{Z}^d$.
\end{definition}

We note that any globally admissible configuration is obviously locally admissible, but the converse is not necessarily true. (In general, a configuration could be locally admissible, but attempting to complete it to all of $\mathbb{Z}^d$ always leads to a forbidden configuration.)

\begin{definition}
For any $\mathbb{Z}^d$ subshift and configuration $w \in L_S(X)$, the \textbf{cylinder set} $[w]$ is the set of all $x \in X$ with $x|_S = w$. We define the \textbf{configuration set} $\langle w \rangle$ to be the set of all configurations $u$ in $L(X)$ with shape containing $S$ for which $u|_S = w$. For any set $C$ of configurations, we use the shorthand notations $[C]$ and $\langle C \rangle$ to refer to $\bigcup_{w \in C} [w]$ and $\bigcup_{w \in C} \langle w \rangle$ respectively. 
\end{definition}

In the following definition and hereafter, for any integers $m < n$, $[m,n]$ denotes the set of integers $\{m, m+1, \ldots, n\}$.

\begin{definition}\label{topent}
The \textbf{topological entropy} of a $\zz^d$ subshift $X$ is
\[
h(X) := \lim_{n_1, \ldots, n_d \rightarrow \infty} \frac{1}{\prod_{i=1}^d n_i} \log |L_{\prod_{i=1}^d [1,n_i]}(X)|.
\]
\end{definition}

We will also need several measure-theoretic definitions.

\begin{definition}\label{totvar}
For any measures $\mu, \nu$ on the same finite probability space $X$, the \textbf{total variational distance} between $\mu$ and $\nu$ is
\[
d(\mu,\nu) := \frac{1}{2} \sum_{x \in X} |\mu(\{x\}) - \nu(\{x\})| = \max_{A \subseteq X} |\mu(A) - \nu(A)|.
\]
\end{definition}

\begin{definition}\label{coupling}
For any $\mu$, $\nu$ measures on probability spaces $X$ and $Y$ respectively, a \textbf{coupling} of $\mu$ and $\nu$ is a measure $\lambda$ on $X \times Y$ whose marginals are $\mu$ and $\nu$; i.e. $\lambda(A \times Y) = \mu(A)$ for all measurable $A \subseteq X$ and $\lambda(X \times B) = \nu(B)$ for all measurable $B \subseteq Y$. If $X = Y$, then an \textbf{optimal coupling} of $\mu$ and $\nu$ is a coupling $\lambda$ which minimizes the probability $\lambda(\{(x,y) \ : \ x \neq y\})$ of disagreement.
\end{definition}

The connection between Definitions~\ref{totvar} and \ref{coupling} is the well-known fact that for any $\mu$ and $\nu$ on the same finite probability space, optimal couplings exist, and the probability of disagreement for an optimal coupling is equal to the total variational distance $d(\mu,\nu)$. 

From now on, any measure $\mu$ on a full shift $A^{\mathbb{Z}^d}$ is assumed to be a Borel probability measure which is shift-invariant, i.e. $\mu(\sigma_t C) = \mu(C)$ for any measurable $C$ and $t \in \mathbb{Z}^d$.

\begin{definition}\label{measent}
For any measure $\mu$ on a full shift $A^{\zz^d}$, the \textbf{measure-theoretic entropy} of $\mu$ is
\[
h(\mu) := \lim_{n_1, \ldots, n_d \rightarrow \infty} \frac{-1}{\prod_{i=1}^d n_i}  \sum_{w \in A^{\prod_{i=1}^d [1,n_i]}} \mu([w]) \log \mu([w]),
\]
where terms with $\mu([w]) = 0$ are omitted from the sum.
\end{definition}

In Definitions~\ref{topent} and \ref{measent}, a subadditivity argument shows that the limits can be replaced by infimums; i.e. for any $n_1, \ldots, n_d$, $\displaystyle h(X) \leq \frac{1}{\prod_{i=1}^d n_i} \log |L_{\prod_{i=1}^d [1,n_i]}(X)|$ and $\displaystyle h(\mu) \leq \frac{-1}{\prod_{i=1}^d n_i}  \sum_{w \in A^{\prod_{i=1}^d [1,n_i]}} \mu([w]) \log \mu([w])$.

\begin{definition}
For any $\zz^d$ subshift $X$, a \textbf{measure of maximal entropy} on $X$ is a measure $\mu$ with support contained in $X$ for which $h(\mu) = h(X)$.
\end{definition}

The classical variational principle (see \cite{Mi} for a proof) says that for any $\zz^d$ subshift $X$, $\sup_{\mu} h(\mu) = h(X)$, where the supremum, taken over all shift-invariant Borel probability measures whose support is contained in $X$, is achieved. Therefore, any $\zz^d$ subshift has at least one measure of maximal entropy. In the specific case when $X$ is a nearest neighbor $\mathbb{Z}^d$ SFT, much is known about the conditional distributions of a measure of maximal entropy.

\begin{definition}\label{MRF}
A measure $\mu$ on $A^{\mathbb{Z}^d}$ is called a {\bf Markov random field} (or {\bf MRF}) if, for any finite $S \subset \mathbb{Z}^d$, any $w \in A^S$, any finite $T \subset \mathbb{Z}^d \setminus S$ s.t. $\partial S \subseteq T$, and any $\delta \in A^T$ with $\mu([\delta]) \neq 0$,
\[
\mu([w] \ | \ [\delta|_{\partial S}]) = \mu([w] \ | \ [\delta]).
\]
\end{definition}
Informally, $\mu$ is an MRF if, for any finite $S \subset \zz^d$, the sites in $S$ and the sites in $\zz^d \setminus (S \cup \partial S)$ are $\mu$-conditionally independent given the sites on $\partial S$. The following characterization of measures of maximal entropy of nearest neighbor $\mathbb{Z}^d$ SFTs is a corollary of the classical Lanford-Ruelle theorem, but the self-contained version proved in \cite{BS1} is useful for our purposes.

\begin{proposition}\label{P1}
{\rm (\cite{BS1}, Proposition 1.20)}
For any nearest neighbor $\mathbb{Z}^d$ SFT $X$, all measures of maximal entropy for $X$ are MRFs, and for any such measure $\mu$ and any finite shape $S \subseteq \mathbb{Z}^d$, the conditional distribution of $\mu$ on $S$ given any $\delta \in L_{\partial S}(X)$ is uniform over all configurations $x \in L_S(X)$ for which $x \delta \in LA(X)$. 
\end{proposition}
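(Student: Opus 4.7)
The plan is to deduce both conclusions simultaneously by an entropy-maximization argument, in the spirit of the Lanford--Ruelle theorem. The guiding observation is that for a nearest neighbor SFT, Shannon's inequality applied to the conditional distribution of $x|_S$ given a boundary pattern yields
\[
H_\mu\bigl(x|_S \,\big|\, x|_{\partial S} = \delta\bigr) \le \log |\mathcal{C}(\delta)|, \quad \mathcal{C}(\delta) := \{w \in L_S(X) : w\delta \in LA(X)\},
\]
with equality iff that conditional distribution is uniform on $\mathcal{C}(\delta)$. Since $\mathcal{C}(\delta)$ depends on the outside of $S$ only through $\delta \in A^{\partial S}$ (the nearest neighbor hypothesis is crucial here), the equality case automatically yields both the MRF property (conditioning on any larger $T \supseteq \partial S$ collapses to conditioning on $\partial S$) and the asserted uniformity.

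To force equality, I would squeeze $h(\mu)$ between matching bounds involving $\mathbb{E}_\mu \log |\mathcal{C}(x|_{\partial S})|$ and then invoke $h(\mu)=h(X)$. For the upper bound, fix $S$ and let $B_N = \prod_{i=1}^d [1,N]$; pack $B_N$ with $k_N$ disjoint translates $S + v_j$, well-separated so that their $\partial$-neighborhoods are disjoint and $k_N |S|/N^d \to 1$. Applying the chain rule for Shannon entropy along an ordering that reveals each translate only after its surrounding sites have been exposed, and invoking shift-invariance of $\mu$, yields
\[
\frac{1}{N^d} H_\mu\bigl(x|_{B_N}\bigr) \;\le\; \frac{1}{|S|} \, \mathbb{E}_\mu \log |\mathcal{C}(x|_{\partial S})| + o(1),
\]
which after $N \to \infty$ (and Definition~\ref{measent}) gives $h(\mu) \le |S|^{-1} \mathbb{E}_\mu \log |\mathcal{C}(x|_{\partial S})|$. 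For the matching lower bound on $h(X)$, tile $B_N$ by translates of $S \cup \partial S$ and build globally admissible configurations on $B_N$ by independently selecting, inside each tile, any element of $\mathcal{C}(\delta)$ compatible with the boundary pattern $\delta$ already placed on surrounding tile boundaries; the nearest neighbor hypothesis guarantees that such local choices combine into a globally admissible configuration on all of $B_N$. A Jensen and shift-averaging argument against the $\mu$-distribution of boundary patterns then produces
\[
h(X) \;\ge\; \frac{1}{|S|} \, \mathbb{E}_\mu \log |\mathcal{C}(x|_{\partial S})|.
\]
Combining the two displays with $h(\mu)=h(X)$ forces Shannon's inequality to be an equality $\mu$-almost surely, which is exactly the proposition.

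The main obstacle will be executing the two tiling arguments cleanly enough that boundary losses are genuinely of order $N^{d-1}$ and disappear after dividing by $N^d$; this requires the translates of $S$ to simultaneously cover asymptotic density $1$ of $B_N$, to be mutually separated so their boundary neighborhoods are disjoint, and to be separated from $\partial B_N$. A secondary delicate point is the Jensen step in the lower bound, where one passes from explicit globally admissible configurations built on worst-case boundary patterns to the $\mu$-averaged boundary distribution; this relies on shift-invariance of $\mu$ together with the subadditive inf-characterization of $h(X)$ noted after Definition~\ref{measent}.
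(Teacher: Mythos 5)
The paper offers no proof of Proposition~\ref{P1} at all --- it is imported verbatim from \cite{BS1} (Proposition 1.20), where it is proved by a perturbation argument in the spirit of the Lanford--Ruelle theorem. So the only comparison available is with that standard argument, and your two-sided counting squeeze is a genuinely different route; unfortunately it contains a fatal gap.

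The gap is the central claimed inequality $h(\mu)\le |S|^{-1}\,\mathbb{E}_\mu \log|\mathcal{C}(x|_{\partial S})|$, which is simply false for fixed finite $S$. Take $d=1$, $X$ the golden mean shift (alphabet $\{0,1\}$, forbidden word $11$), $\mu$ its Parry measure, and $S=\{0\}$. Then $|\mathcal{C}(\delta)|=2$ exactly when $\delta$ places $0$ at both of $\pm 1$, an event of $\mu$-probability $2/(1+\phi^2)\approx 0.553$ ($\phi$ the golden ratio), so $\mathbb{E}_\mu\log|\mathcal{C}(x|_{\partial S})|\approx 0.383$, while $h(\mu)=\log\phi\approx 0.481$. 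Note that the conclusion of the proposition \emph{does} hold here (the Parry measure is uniform on $\{0,1\}$ at the origin given two $0$-neighbors), so what fails is not the proposition but your inequality: equality in Shannon's bound is perfectly compatible with $h(\mu)$ strictly exceeding $|S|^{-1}\mathbb{E}_\mu\log|\mathcal{C}|$. The reason your packing argument cannot produce the bound is geometric: translates of a fixed finite $S$ with pairwise disjoint outer boundaries occupy density at most $|S|/(|S|+2d)<1$ of $B_N$, since each translate forces at least $|\partial S|\ge 2d$ leftover sites of its own; the requirement $k_N|S|/N^d\to 1$ that you flag as ``the main obstacle'' is not an obstacle but an impossibility. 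The leftover sites carry entropy of positive density, and that is precisely the deficit visible in the golden mean computation. The lower bound suffers the same density loss (at best $h(X)\ge (k_N/N^d)\,\mathbb{E}_\mu\log|\mathcal{C}|$ with $k_N/N^d\le 1/|S\cup\partial S|$), so the two bounds never meet for fixed $S$. Letting $S$ grow repairs the densities but then only shows that the Shannon deficit \emph{per site} vanishes along large boxes, which does not localize to the exact equality needed for every fixed $S$ and $\delta$ (nor, separately, would equality conditioned on $\partial S$ alone yield the MRF property; for that you must run the argument conditioning on the full previously-revealed exterior). The workable route is the one in \cite{BS1}: assuming non-uniformity of the conditional law on a positive-measure set of boundaries, resample $x|_{S+v}$ independently and uniformly from $\mathcal{C}(x|_{\partial(S+v)})$ for $v$ in a sparse sublattice (the nearest neighbor property keeps the result in $X$, as you correctly observe), average over translates to restore shift-invariance, and verify a strict entropy increase of order (density of resampled blocks) $\times$ (non-uniformity), contradicting maximality; this needs only a one-sided strict improvement rather than matching bounds.
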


In other words, given any nearest neighbor $\zz^d$ SFT $X$, there is a unique set of conditional distributions that any measure of maximal entropy $\mu$ must match up with. However, this does not uniquely determine $\mu$, as there could be several different measures with the same conditional distributions. For any $\delta \in L_{\partial S}(X)$ as in Proposition~\ref{P1}, we denote by $\Lambda^{\delta}$ the common uniform conditional distribution on $S$ given $\delta$ that every measure of maximal entropy $\mu$ must have.

Next, we define some useful conditions for SFTs and measures supported on SFTs from the literature, many of which we will be able to prove for nearest neighbor $\mathbb{Z}^d$ SFTs which are $\epsilon$-full for small enough $\epsilon$.

\begin{definition}
A {\bf measure-theoretic factor map} between two measures $\mu$ on $A^{\mathbb{Z}^d}$ and $\mu'$ on $B^{\mathbb{Z}^d}$ is a measurable function $F : A^{\mathbb{Z}^d} \rightarrow B^{\mathbb{Z}^d}$ which commutes with the shift action (i.e. $F(\sigma_t x) = \sigma_t F(x)$ for all $x \in A^{\mathbb{Z}^d}$) and for which $\mu'(C) = \mu(F^{-1} C)$ for all measurable $C \subseteq B^{\mathbb{Z}^d}$.
\end{definition}

\begin{definition}
A {\bf measure-theoretic isomorphism} is a measure-theoretic factor map which is bijective between sets of full measure in the domain and range.
\end{definition}

\begin{definition}
A measure $\mu$ on $A^{\mathbb{Z}^d}$ is {\bf ergodic} if any measurable set $C$ which is shift-invariant, meaning $\mu(C \triangle \sigma_t C) = 0$ for all $t \in \mathbb{Z}^d$, has measure $0$ or $1$. Equivalently, $\mu$ is ergodic iff for any configurations $u, u'$ over $A$, 
\[
\lim_{n \rightarrow \infty} \frac{1}{(2n+1)^d} \sum_{t \in [-n,n]^d} \mu([u] \cap \sigma_t [u']) = \mu([u]) \mu([u']).
\]
\end{definition}

\begin{definition}
A measure $\mu$ on $A^{\mathbb{Z}^d}$ is {\bf measure-theoretically strong mixing} if for any configurations $u, u'$ over $A$ and any sequence $t_n \in \mathbb{Z}^d$ for which $\|t_n\|_{\infty} \rightarrow \infty$,
\[
\lim_{n \rightarrow \infty} \mu([u] \cap \sigma_{t_n} [u']) = \mu([u]) \mu([u']).
\]
\end{definition}

\begin{definition}
A measure $\mu$ on $A^{\mathbb{Z}^d}$ is {\bf Bernoulli} if it is 
independent and identically distributed over the sites of $\mathbb{Z}^d$. 
\end{definition}

In dynamics, traditionally a measure is also called Bernoulli if it is measure-theoretically isomorphic to a Bernoulli measure. There is an entire hierarchy of measure-theoretic mixing conditions, all of which are useful isomorphism invariants of measures. (See, for instance, \cite{W}.) We will not spend much space here discussing this hierarchy, because Bernoullicity is the strongest of all of them, and we will verify that the unique measure of maximal entropy of $\epsilon$-full nearest neighbor $\mathbb{Z}^d$ SFTs is isomorphic to a Bernoulli measure for sufficiently small $\epsilon$. 

\begin{definition}
A {\bf topological factor map} between two $\mathbb{Z}^d$ subshifts $X$ and $X'$ is a surjective continuous function $F : X \rightarrow X'$ which commutes with the shift action (i.e. $F(\sigma_t x) = \sigma_t F(x)$ for all $x \in A^{\mathbb{Z}^d}$).
\end{definition}




\begin{definition}
A {\bf topological conjugacy} is a bijective topological factor map.
\end{definition}

The next three definitions are examples of topological mixing conditions, which all involve exhibiting multiple globally admissible configurations in a single point, when separated by a large enough distance.

\begin{definition}\label{mixing}
A $\mathbb{Z}^d$ SFT $X$ is {\bf topologically mixing} if for any configurations $u,u' \in L(X)$, there exists $n$ so that $[u] \cap \sigma_t [u'] \neq \varnothing$ for any $t \in \mathbb{Z}^d$ with $\|t\|_{\infty} > n$.
\end{definition}

\begin{definition}\label{block}
A $\mathbb{Z}^d$ SFT $X$ is {\bf block gluing} if there exists $n$ so that for any configurations $u,u' \in L(X)$ with shapes rectangular prisms and any $t \in \mathbb{Z}^d$ for which $u$ and $\sigma_t u$ are separated by distance at least $n$, $[u] \cap \sigma_t [u'] \neq \varnothing$.
\end{definition}

\begin{definition}\label{UFPdef}
A $\mathbb{Z}^d$ SFT $X$ has the {\bf uniform filling property} or {\bf UFP} if there exists $n$ such that for any configuration $u \in L(X)$ with shape a rectangular prism $R = \prod [a_i, b_i]$, and any point $x \in X$, there exists $y \in X$ such that $y|_R = u$, and $y|_{\mathbb{Z}^d \setminus \prod [a_i - n, b_i + n]} = x|_{\mathbb{Z}^d \setminus \prod [a_i - n, b_i + n]}$.
\end{definition}

All of these conditions are invariant under topological conjugacy. Note the subtle difference in the definitions: Definitions~\ref{block} and \ref{UFPdef} require a uniform distance which suffices to mix between all pairs of configurations of a certain type, whereas Definition~\ref{mixing} allows this distance to depend on the configurations. In general, standard topological mixing is not a very strong condition for $\mathbb{Z}^d$ SFTs; usually a stronger condition involving a uniform mixing length such as block gluing or UFP is necessary to prove interesting results. (See \cite{BPS} for a detailed description of a hierarchy of topological mixing conditions for $\mathbb{Z}^d$ SFTs.)

The final topological properties that we will show for $\epsilon$-full SFTs for small $\epsilon$ do not quite fit into the topological mixing hierarchy. The first involves modeling measure-theoretic dynamical systems within a subshift.

\begin{definition}
A $\mathbb{Z}^d$ subshift $X$ is a {\bf measure-theoretic universal model} if for any $\mathbb{Z}^d$ ergodic measure-theoretic dynamical system $(Y, \mu, (T_t)_{t \in \mathbb{Z}^d})$, there exists a measure $\nu$ on $X$ so that $(X, \nu, (\sigma_t)_{t \in \mathbb{Z}^d}) \cong (Y, \mu, (T_t)_{t \in \mathbb{Z}^d})$.
\end{definition}

It was shown in \cite{RS} that any $\mathbb{Z}^d$ SFT with the UFP is a measure-theoretic universal model.



We also need a definition from computability theory.

\begin{definition}
A real number $\alpha$ is \textbf{computable in time} $f(n)$ if there exists a Turing machine which, on input $n$, outputs a pair $(p_n, q_n)$ of integers such that $|\frac{p_n}{q_n} - \alpha| < \frac{1}{n}$, and if this procedure takes less than $f(n)$ operations for every $n$. We say that $\alpha$ is \textbf{computable} if it is computable in time $f(n)$ for some function $f(n)$.
\end{definition}


For an introduction to computability theory, see \cite{Ko}. The relationship between multidimensional symbolic dynamics and computability theory has been the subject of much work in recent years, but is still not completely understood. One foundational result is from \cite{HM}, where it is shown that a real number is the entropy of some $\mathbb{Z}^d$ SFT for any $d > 1$ if and only if it has a property called right recursive enumerability, which is strictly weaker than computability and which we do not define here. It is also shown in \cite{HM} that if a $\mathbb{Z}^d$ SFT has the uniform filling property, then its entropy is in fact computable.

\

We conclude this section by finally defining $\epsilon$-fullness of a nearest neighbor $\mathbb{Z}^d$ SFT and showing its connection to entropy.

\begin{definition} 
For any $\epsilon > 0$, we say that a nearest neighbor $\mathbb{Z}^d$ SFT $X$ with alphabet $A$ is {\bf $\epsilon$-full} if $A$ can be partitioned into sets $G$ (good letters) and $B$ (bad letters) with the properties that

(i) $|G| > (1-\epsilon)|A|$

(ii) $\forall g \in G, i \in [1,d], \tau \in \{\pm 1\}$, the set of legal neighbors of $g$ in the $\tau \vec{e_i}$-direction has cardinality greater than $(1-\epsilon) |A|$. 
\end{definition}

We first show some useful technical properties for $\epsilon$-full nearest neighbor $\mathbb{Z}^d$ SFTs with small $\epsilon$.

\begin{lemma}\label{manyfillings}
If $X$ is $\epsilon$-full for $\epsilon < \frac{1}{2d+2}$, then for any locally admissible configuration $w$ with shape $S$ with $w|_{\underline{\partial} S} \in G^{\underline{\partial} S}$ and any $t \in \mathbb{Z}^d \setminus S$, there exists a nonempty subset $G'$ of $G$ with cardinality greater than $|A|(1 - (2d+1)\epsilon)$ so that for any $g' \in G'^{\{t\}}$, the concatenation $w g'$ is locally admissible.
\end{lemma}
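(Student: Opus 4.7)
The plan is to use a straightforward inclusion-exclusion / union-bound style counting argument. Since $w$ is already locally admissible, the only way $wg'$ could fail to be locally admissible is through a forbidden adjacency involving the new site $t$. The relevant adjacencies are between $t$ and its neighbors in $S$, i.e.\ the sites in $N_t \cap S$. Any such neighbor $s$ lies in $S$ but is adjacent to $t \in \mathbb{Z}^d \setminus S$, so $s \in \underline{\partial}S$, and thus by hypothesis $w(s) \in G$.

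First I would fix the at most $2d$ constraining sites $s_1, \ldots, s_k \in N_t \cap S$ (with $k \leq 2d$). For each $s_j$, applying condition (ii) of $\epsilon$-fullness to the good letter $w(s_j)$ in the direction $t - s_j$, the set $F_j \subseteq A$ of letters that are \emph{not} legal neighbors of $w(s_j)$ in that direction satisfies $|F_j| < \epsilon |A|$. Any $a \in A$ for which $wg'$ (with $g'(t) = a$) is locally admissible is precisely any $a \notin \bigcup_{j=1}^k F_j$, so the set of valid candidates in $A$ has cardinality greater than $|A| - 2d\epsilon|A| = |A|(1 - 2d\epsilon)$.

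Next I would intersect with $G$. Since $|A \setminus G| < \epsilon|A|$, setting
\[
G' := G \setminus \bigcup_{j=1}^k F_j
\]
gives $|G'| > |A|(1 - 2d\epsilon) - \epsilon|A| = |A|(1 - (2d+1)\epsilon)$, which is the asserted bound. The hypothesis $\epsilon < \frac{1}{2d+2}$ yields $(2d+1)\epsilon < \frac{2d+1}{2d+2} < 1$, so $|G'| > 0$ and $G'$ is nonempty. Finally, for any $g' \in G'^{\{t\}}$, local admissibility of $wg'$ follows because $w$ was locally admissible and every pair $\{s_j, t\}$ is a legal adjacency by choice of $G'$; no other new adjacent pairs are created.

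I do not expect any serious obstacle: the argument is essentially a union bound on the at most $2d$ adjacency constraints plus one constraint requiring the new letter to be good. The only point requiring a moment of care is that $s \in N_t \cap S$ automatically lies in $\underline{\partial}S$, which is what lets us apply condition (ii) to $w(s)$; and then the arithmetic $1 - 2d\epsilon - \epsilon = 1 - (2d+1)\epsilon$ matches the bound in the statement exactly.
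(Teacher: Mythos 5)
Your proof is correct and follows essentially the same route as the paper's: identify the at most $2d$ constraining neighbors in $N_t \cap S \subseteq \underline{\partial}S$ (hence $G$-letters), apply condition (ii) of $\epsilon$-fullness to each, take a union bound, and intersect with $G$ to lose at most another $\epsilon|A|$. The only cosmetic difference is that the paper first disposes of the degenerate case $|A|\epsilon < 1$ (where $X$ is a full shift) to ensure the bound is at least $1$, whereas you obtain nonemptiness directly from the strict inequality $|G'| > |A|(1-(2d+1)\epsilon) > 0$ together with integrality, which is equally valid.
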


\begin{proof}

Since $\epsilon < \frac{1}{2d+2}$, $|A|(1 - (2d+1)\epsilon) \geq |A| \epsilon$. We note that if $|A| \epsilon < 1$, then $\epsilon$-fullness of $X$ implies that $G = A$ and $X$ is a full shift, in which case the lemma is trivial. So, we can assume that $|A|(1 - (2d+1) \epsilon) \geq 1$. Define $N = N_t \cap S$, and note that $N \subseteq \underline{\partial} S$. For any $a \in A$, as long as $a$ can appear legally next to each of the at most $2d$ letters in $w|_N$, the concatenation $wa$ is locally admissible. Each letter in $w|_N$ is a $G$-letter, and so by $\epsilon$-fullness, for each $t \in N$, the set of letters which can appear legally at $t$ adjacent to $w(t)$ has cardinality at least $|A| (1 - \epsilon)$, and so there are at least $|A| (1 - 2d \epsilon)$ letters in $A$ for which $wa$ is locally admissible. Since $|G| > |A| (1-\epsilon)$, at least $|A| (1 - (2d+1) \epsilon)$ of these letters are in $G$, and we are done.

\end{proof}

\begin{corollary}\label{LA=GA}
If $X$ is $\epsilon$-full and $\epsilon < \frac{1}{2d+2}$, then any locally admissible configuration $w$ with shape $S$ with $w|_{\underline{\partial} S}$ consisting only of $G$-letters is also globally admissible. In particular, $w$ can be extended to a point of $X$ by appending only $G$-letters to $w$.
\end{corollary}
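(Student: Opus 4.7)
The approach is to extend $w$ site by site, always appending a $G$-letter, using Lemma~\ref{manyfillings} at each step. Since $X$ is a nearest neighbor SFT, its forbidden list consists of adjacent pairs, so a point $x \in A^{\mathbb{Z}^d}$ lies in $X$ if and only if every finite restriction of $x$ is locally admissible. Thus producing an extension of $w$ to all of $\mathbb{Z}^d$ all of whose finite restrictions are locally admissible will immediately show $w \in L(X)$.

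Concretely, I would fix any enumeration $t_1, t_2, \ldots$ of $\mathbb{Z}^d \setminus S$ and inductively build locally admissible extensions $w_k$ of $w$ to $S_k := S \cup \{t_1, \ldots, t_k\}$, maintaining the invariant that $w_k|_{\underline{\partial} S_k}$ consists only of $G$-letters. Setting $w_0 = w$ establishes the base case from the hypothesis. For the inductive step, the invariant places us exactly in the setting of Lemma~\ref{manyfillings} applied to the configuration $w_k$ and the site $t_{k+1}$, yielding a nonempty subset $G' \subseteq G$ of letters that can be appended at $t_{k+1}$ while keeping the configuration locally admissible; choose any $g \in G'$ and set $w_{k+1} := w_k g$.

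The one step that requires care is verifying that the inner boundary invariant is preserved. Any $s \in \underline{\partial} S_{k+1}$ is either $t_{k+1}$, whose assigned value $g$ lies in $G$ by construction, or a site $s \in S_k$ that has a neighbor outside $S_{k+1}$; such a neighbor lies outside $S_k$ as well, so $s \in \underline{\partial} S_k$ and, by the inductive hypothesis, carries a $G$-letter in $w_k$ (hence in $w_{k+1}$). Therefore $\underline{\partial} S_{k+1} \subseteq \underline{\partial} S_k \cup \{t_{k+1}\}$, and the invariant survives.

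The compatible sequence $(w_k)_{k \geq 0}$ defines a unique $x \in A^{\mathbb{Z}^d}$ with $x|_{S_k} = w_k$ for every $k$. Every finite $T \subset \mathbb{Z}^d$ lies in some $S_k$, so $x|_T$ is a restriction of a locally admissible configuration and is itself locally admissible; consequently $x \in X$. Since $x|_S = w$ and $x|_{\mathbb{Z}^d \setminus S}$ consists entirely of $G$-letters, both conclusions of the corollary follow. The only non-mechanical ingredient is the boundary-tracking argument in the previous paragraph; the rest is a direct iteration of Lemma~\ref{manyfillings}.
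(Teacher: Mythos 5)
Your proposal is correct and follows essentially the same route as the paper: an inductive site-by-site extension via Lemma~\ref{manyfillings}, tracking that the inner boundary of the growing shape stays in $G$ (the paper records the slightly stronger invariant that all of $(\underline{\partial} S) \cup \{s_1,\ldots,s_n\}$ carries $G$-letters, which implies yours via the same containment you prove), followed by passing to the limit point. No gaps.
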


\begin{proof}

As before, we can reduce to the case where $|A|(1 - (2d+1) \epsilon) \geq 1$. Suppose $w$ is a locally admissible configuration with shape $S$ s.t. $w|_{\underline{\partial} S}$ consists only of $G$-letters, and arbitrarily order the sites in $\mathbb{Z}^d \setminus S$ as $s_i$, $i \in \mathbb{N}$. We claim that for any $n$, there exists a locally admissible configuration $w_n$ with shape $S \cup \bigcup_{i=1}^{n} \{s_i\}$ such that $w_n|_S = w$, $w_n|_{(\underline{\partial} S) \cup \bigcup_{i=1}^{n} \{s_i\}}$ consists only of $G$-letters, and each $w_n$ is a subconfiguration of $w_{n+1}$.

The proof is by induction: the existence of $w_1$ is obvious by applying Lemma~\ref{manyfillings} to $w$ and $s_1$, and for any $n$, if we assume the existence of $w_n$, the existence of $w_{n+1}$ comes from applying Lemma~\ref{manyfillings} to $w_n$ and $s_{n+1}$, along with the observation that clearly $\underline{\partial}(S \cup \bigcup_{i=1}^n \{s_i\}) \subseteq (\underline{\partial} S) \cup \bigcup_{i=1}^n \{s_i\}$.

Then the $w_n$ approach a limit point $x \in G^{\mathbb{Z}^d}$, which is in $X$ since each $w_n$ was locally admissible. Since $w$ was a subconfiguration of each $w_n$, it is a subconfiguration of $x$, and so $w \in L(X)$.

\end{proof}

Surprisingly, the $\epsilon$-fullness property is closely related to a simpler property which can be stated without any reference to adjacency rules, i.e. having entropy close to the log of the alphabet size.

\begin{theorem}\label{equiv}
For any $\epsilon > 0$ and $d$, there exists a $\beta = \beta(\epsilon,d)$ so that for a nearest neighbor $\mathbb{Z}^d$ SFT $X$ with alphabet $A$,
\[
h(X) > (\log |A|) - \beta \Longrightarrow X\textrm{ is } \epsilon-\textrm{full}.
\]
Also, for any $\beta > 0$ and $d$, there exists an $\epsilon = \epsilon(\beta,d)$ so that 
\[
X\textrm{ is } \epsilon-\textrm{full} \Longrightarrow h(X) > (\log |A|) - \beta.
\]
\end{theorem}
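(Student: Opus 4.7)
The plan is to prove both implications by essentially straightforward counting, using subadditivity of entropy in one direction and the greedy extension afforded by Corollary~\ref{LA=GA} in the other.

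For the implication $h(X) > \log|A|-\beta \Rightarrow X$ is $\epsilon$-full, I would apply the subadditivity bound for $h(X)$ noted after Definition~\ref{measent} to the two-site shape $\{0,\vec{e_i}\}$, obtaining $|L_{\{0,\vec{e_i}\}}(X)| \geq e^{2h(X)} > |A|^2 e^{-2\beta}$. Setting $\eta := 1-e^{-2\beta}$, this says that in each of the $2d$ signed directions at most $\eta|A|^2$ ordered adjacent pairs are missing from $L(X)$. Calling a letter \emph{bad} if in some signed direction it has at least $\epsilon|A|$ missing neighbors, a quick double count shows that at most $\eta|A|/\epsilon$ letters are bad in any one direction, hence at most $2d\eta|A|/\epsilon$ letters are bad overall. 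Taking $G$ to be the complement would give property (ii) of $\epsilon$-fullness by construction, and property (i) would follow once $2d\eta < \epsilon^2$, which is ensured by any $\beta(\epsilon,d)$ of order $\epsilon^2/d$.

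For the converse, assuming $X$ is $\epsilon$-full with $\epsilon<1/(2d+2)$ (so that Corollary~\ref{LA=GA} applies), I would fix an arbitrary rectangular box $S$, enumerate its sites as $s_1,\dots,s_{|S|}$, and greedily build a configuration using only $G$-letters. At step $k$, each previously placed neighbor of $s_k$ is a $G$-letter, so $\epsilon$-fullness of $X$ rules out fewer than $2d\epsilon|A|$ candidate letters total; intersecting with $G$ leaves more than $(1-(2d+1)\epsilon)|A|$ legal $G$-letter choices. The resulting configuration is locally admissible with $\underline{\partial}S$ entirely in $G$, so by Corollary~\ref{LA=GA} it lies in $L_S(X)$. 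This would give $|L_S(X)| \geq \bigl((1-(2d+1)\epsilon)|A|\bigr)^{|S|}$ for every rectangular $S$, and hence $h(X) \geq \log|A|+\log(1-(2d+1)\epsilon)$; this exceeds $\log|A|-\beta$ once $\epsilon < (1-e^{-\beta})/(2d+1)$, fixing the required $\epsilon(\beta,d)$.

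Neither direction looks like a real obstacle; both reduce to plugging the right shape into the subadditivity bound or the right greedy construction into Corollary~\ref{LA=GA}. The only mild subtlety is juggling strict inequalities in the definition of $\epsilon$-fullness and staying under the threshold $1/(2d+2)$ needed for Corollary~\ref{LA=GA}, both of which can be absorbed into slightly more conservative choices of the quantitative relationship between $\beta$ and $\epsilon$. The natural scaling $\beta \sim \epsilon^2/d$ that falls out is also consistent with the polynomial decay rates discussed in the introduction.
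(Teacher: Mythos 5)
Your proposal is correct and follows essentially the same route as the paper: bounding $h(X)$ via the two-site shape $\{0,\vec{e_i}\}$ for the first implication (you argue directly where the paper argues by contrapositive, but the counting is identical and yields the same threshold $\beta \sim \epsilon^2/d$), and a greedy $G$-letter filling validated by Corollary~\ref{LA=GA} for the second. The only substantive difference is that the paper notes each site in lexicographic order has at most $d$ previously filled neighbors, giving $(1-(d+1)\epsilon)|A|$ choices per step instead of your $(1-(2d+1)\epsilon)|A|$; this only changes the constant in $\epsilon(\beta,d)$ and does not affect the validity of the argument.
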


\begin{proof}
Fix any $d$ and $\epsilon > 0$, and suppose that $X$ is not $\epsilon$-full. This implies that if we define $B$ to be the set of $b \in A$ for which there exists $i \in [1,d]$ and $\tau \in \{\pm 1\}$ so that there are at least $\epsilon |A|$ letters which cannot follow $b$ in the $\tau \vec{e_i}$-direction, then $|B| > \epsilon |A|$. (Otherwise, taking $G$ to be $B^c$ would show that $X$ is $\epsilon$-full.)

Then, there exist $\tau \in \{\pm 1\}$, $i \in [1,d]$ and a set $B_i \subset B$ with $|B_i| > \frac{\epsilon}{2d} |A|$ so that for each $b \in B_i$, there are at least $\epsilon |A|$ letters which cannot follow $b$ in the $\vec{e_i}$-direction. This implies that there are at least $|B_i| \epsilon |A| > \frac{\epsilon^2}{2d} |A|^2$ configurations with shape $\{0\} \cup \{\tau \vec{e_i}\}$ which are not in $L(X)$, and so $|L_{\{0\} \cup \{\tau \vec{e_i}\}}(X)| < |A|^2 \left(1 - \frac{\epsilon^2}{2d}\right)$. Then
\[
h(X) \leq \frac{1}{2} \log |L_{\{0\} \cup \{\vec{e_i}\}}(X)| < \log |A| - \frac{1}{2} \log \frac{2d}{2d - \epsilon^2},
\]
and so taking $\beta(\epsilon,d) = \frac{1}{2} \log \frac{2d}{2d - \epsilon^2}$ proves the first half of the theorem. For future reference, we note that $\beta(\epsilon,d) = \frac{1}{2} \log \frac{2d}{2d - \epsilon^2} = - \frac{1}{2} \log \left(1 - \frac{\epsilon^2}{2d}\right) > -\frac{1}{2} \left(-\frac{\epsilon^2}{2d}\right) = \frac{\epsilon^2}{4d}$.

Now fix any $\epsilon > 0$, and suppose that $X$ is $\epsilon$-full. Then, for any $n$, we bound from below the size of $L_{[1,n]^d}(X)$. Construct configurations in the following way: order the sites in $[1,n]^d$ lexicographically. Then fill the first site in $[1,n]^d$ with any $G$-letter. Fill the second site with any $G$-letter which can legally appear next to the first placed $G$-letter, and continue in this fashion, filling the sites in order with $G$-letters, and each time placing any legal choice given the letters which have already been placed. By Lemma~\ref{manyfillings}, at each step we will have at least $|A|(1 - (d+1)\epsilon)$ choices. We can create more than $(|A|(1 - (d+1)\epsilon))^{n^d}$ configurations in this way, and each one is in $L(X)$ by Lemma~\ref{LA=GA}. This means that for any $n$,
\[
|L_{[1,n]^d}(X)| > (|A|(1 - (d+1)\epsilon))^{n^d},
\]
which implies that $h(X) \geq \log(|A|(1 - (d+1)\epsilon)) > \log |A| + \log(1 - (d+1)\epsilon)$. Then taking $\epsilon(\beta,d) = \frac{1 - e^{-\beta}}{d+1}$ proves the second half of the theorem.

\end{proof}


We will informally refer to nearest neighbor $\mathbb{Z}^d$ SFTs with topological entropy close to $\log |A|$ as having ``nearly full entropy.'' By Theorem~\ref{equiv}, this condition is equivalent to being $\epsilon$-full for small $\epsilon$, and so we will often use the terms ``$\epsilon$-full for small $\epsilon$'' and ``nearly full entropy'' somewhat interchangeably in the sequel.

Before stating and proving our main results, here are some examples of nearest neighbor $\mathbb{Z}^d$ SFTs which are $\epsilon$-full for small $\epsilon$.

\begin{example}\label{onepoint}
Take $X$ to have alphabet $A = \{0,1,\ldots,n\}$, and the only adjacency rule is that any neighbor of a $0$ must also be a $0$. Then $X$ is just the union of the full shift on $\{1,\ldots,n\}$ and a fixed point of all $0$s. Clearly $X$ is $\epsilon$-full for $\epsilon < \frac{1}{n + 1}$.
\end{example}

\begin{example}\label{vertlines}
Take $X$ to have alphabet $A = \{0,1,\ldots,n\}$, and the only adjacency rule is that a $0$ can only appear above and below other $0$s. Then $X$ consists of points whose columns are either sequences on $\{1,\ldots,n\}$ (with no restrictions on which rows can appear) or all $0$s. Again, clearly $X$ is $\epsilon$-full for $\epsilon < \frac{1}{n+1}$.
\end{example}

\begin{example}\label{fullshift}
Take $X$ to be the full shift on $A = \{0,1,\ldots,n\}$. Then trivially $X$ is $\epsilon$-full for any $\epsilon > 0$. For the purposes of this example though, think of $A$ as being partitioned into $G = \{1,\ldots,n\}$ and $B = \{0\}$, which would demonstrate that $X$ is $\epsilon$-full for $\epsilon < \frac{1}{n+1}$.
\end{example}

These examples illustrate the different ways in which $B$-letters can coexist with $G$-letters, which is the unknown quantity in the description of $\epsilon$-full SFTs. 
In Examples~\ref{onepoint} and \ref{vertlines}, the existence of a $B$-letter forces the existence of an infinite component of $B$-letters. It turns out that in such examples, $B$-letters are rare in ``most'' configurations of $X$; in particular, they have zero measure for any measure of maximal entropy. In contrast, Example~\ref{fullshift} clearly has a unique Bernoulli measure of maximal entropy, whose support contains all configurations (including those with $B$-letters). However, for large $n$, in ``most'' configurations the $B$-letters appear with the small frequency $\frac{1}{n}$. The dichotomy is that for measures of maximal entropy, either $B$-letters can only appear within infinite clusters of $B$-letters (and then have zero measure), or $B$-letters ``coexist peacefully'' with $G$-letters, and appear in most configurations, albeit with frequency less than $\epsilon$.

\section{Properties of $\epsilon$-full/nearly full entropy SFTs}
\label{props}

The goal of this section is to prove the following theorem.


\begin{theorem}\label{mainresult}
For any $d$, and $\epsilon_d := \frac{1}{3^3 2^{11} d^8}$, 
any $\epsilon_d$-full nearest neighbor $\mathbb{Z}^d$ SFT $X$ has the following properties:

{\rm (A)} $X$ has a unique measure of maximal entropy $\mu$

{\rm (B)} $h(X)$ is computable in time $e^{O(n^d)}$ 

{\rm (C)} $X$ is a measure-theoretic universal model

{\rm (D)} $\mu$ is measure-theoretically isomorphic to a Bernoulli measure


\end{theorem}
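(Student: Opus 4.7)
The plan is to prove (A) first via a disagreement-percolation coupling, and then use the MRF structure of the unique MME $\mu$, together with the combinatorial content of Lemma~\ref{manyfillings} and Corollary~\ref{LA=GA}, to obtain (B), (C), and (D). For (A), suppose $\mu_1, \mu_2$ are two measures of maximal entropy. By Proposition~\ref{P1} they share the same specification: on any finite $S$ with boundary configuration $\delta \in L_{\partial S}(X)$, both measures condition to the same uniform distribution $\Lambda^\delta$. Fix a site $t$ and a large cube $B_n \ni t$; couple the marginals $\mu_1|_{\partial B_n}$ and $\mu_2|_{\partial B_n}$ arbitrarily, and extend inward by an optimal coupling of the two MRF specifications site-by-site. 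The two key estimates I expect to need are: first, that the per-site total variation between $\Lambda^{\delta_1}$ and $\Lambda^{\delta_2}$ at a single site is $O(d\epsilon)$ when $\delta_1, \delta_2$ differ only at one neighbor (using Lemma~\ref{manyfillings} to compare the sets of legal extensions); and second, that any disagreement at $t$ forces a connected set of disagreement sites reaching $\partial B_n$, so that a Peierls-type contour sum $\sum_{\ell} (2d)^{c\ell} (Cd\epsilon)^\ell$ controls the disagreement probability. This sum converges provided $\epsilon \ll d^{-O(1)}$, and the numerics should eventually match the threshold $\epsilon_d = \tfrac{1}{3^3 \cdot 2^{11} d^8}$.

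For (B) and (C), I would extract from $\epsilon$-fullness a ``filling with $G$-letters'' property strong enough to substitute for UFP where it is used in \cite{HM} and \cite{RS}. By Corollary~\ref{LA=GA}, any locally admissible configuration with $G$-letter inner boundary is globally admissible; by iterating Lemma~\ref{manyfillings}, any $u \in L_R(X)$ can be surrounded by a bounded-width shell of $G$-letters and then extended to all of $\mathbb{Z}^d$ using only $G$-letters. Counting locally admissible completions on shells of cubes of side $n$ then gives an algorithm computing $h(X)$, with approximation error bounded by the subadditivity estimate from Section~\ref{defns}, running in time $e^{O(n^d)}$, yielding (B). For (C), the same shell-of-$G$-letters construction supplies the markers and filler patterns required by the \cite{RS} universal-model construction; effectively, the ``$G$-part'' of $X$ behaves like an almost-full shift whose universality lifts to $X$.

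For (D), the same coupling estimate underlying (A) should give exponentially decaying disagreement probabilities in the distance between the site $t$ and the conditioning set. This ``finitely determined'' or very-weak-Bernoulli condition for $\mu$, together with positivity of entropy, then implies by the Ornstein--Weiss isomorphism theorem for $\mathbb{Z}^d$-actions that $\mu$ is measure-theoretically isomorphic to a Bernoulli shift.

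The principal obstacle is the quantitative disagreement-percolation bound underpinning (A) and (D). Conditional distributions here are uniform over combinatorially defined completion sets, so a single $B$-letter on the boundary can perturb $\Lambda^\delta$ nontrivially; the argument must separate $G$-boundary sites (robust, nearly product-like) from $B$-boundary sites (paying $O(d\epsilon)$ per disagreement) and control the geometry of $B$-clusters well enough to force the Peierls sum to converge at the sharp polynomial rate $\epsilon_d = \Theta(d^{-8})$. Once this core estimate is in hand, (B)--(D) should follow relatively cleanly from the filling constructions above.
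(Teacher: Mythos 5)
There is a genuine gap at the heart of your proof of (A). Your plan is to take two measures of maximal entropy $\mu_1,\mu_2$, observe via Proposition~\ref{P1} that they share the specification $\{\Lambda^\delta\}$, and then run a disagreement-percolation coupling inward from $\partial B_n$ with a Peierls sum controlled by $\epsilon$. If that argument worked as stated, it would prove that there is a \emph{unique MRF} with conditional probabilities $\{\Lambda^\delta\}$ --- and that is false for $\epsilon$-full SFTs, even for $\epsilon$ arbitrarily small. In Example~\ref{onepoint}, both the point mass at $0^{\mathbb{Z}^d}$ and the Bernoulli measure of maximal entropy on $\{1,\dots,n\}^{\mathbb{Z}^d}$ are shift-invariant MRFs with the same uniform specification (see Remark~\ref{notimprovement}). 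Concretely, the step that fails is the per-site total-variation estimate: Lemma~\ref{unlikelyB} and the $O(d\epsilon)$ closeness to uniform only hold when the conditioning configuration consists of $G$-letters, and a boundary of $B$-letters can \emph{deterministically} force $B$-letters arbitrarily deep inside (Example~\ref{vertlines}), so no Peierls sum started from an arbitrary coupling of $\mu_1|_{\partial B_n}$ and $\mu_2|_{\partial B_n}$ can converge. The missing idea is an argument that uses \emph{maximality of entropy} beyond the specification: one must first show that for any measure of maximal entropy, the probability that the origin is joined to $\partial[-n,n]^d$ by a path of $B$-letters tends to $0$ (the paper's Fact~\ref{Bpathdecayfact}, proved by a counting/replacement argument --- swapping $B$-clusters for $G$-letters in exponentially many ways would otherwise contradict the entropy count). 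Only then can one condition on the outermost all-$G$ contour and apply the coupling estimate, which is valid for $G$-boundaries.

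A secondary problem infects your outline of (B) and (C): the claim that any $u\in L_R(X)$ can be surrounded by a bounded-width shell of $G$-letters is false (again Example~\ref{vertlines}: a single $B$-letter can force an infinite $B$-column). For (B) the paper instead proves the inequality $|L_{[1,n]^d}(X)\cap\langle\delta\rangle|\le |L_{[1,n]^d}(X)\cap\langle G^{\Gamma}\rangle|$ for every boundary $\delta$ by an injective replacement of $B$-components, which gives the needed upper bound $|L_{[1,n]^d}(X)|\le |A|^{|\Gamma|}|L_{[1,n]^d}(X)\cap\langle G^{\Gamma}\rangle|$; your counting scheme still needs some such substitute. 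For (C), since $X$ itself need not have the UFP (or even be topologically mixing --- Section~\ref{unrelated}), one cannot feed $X$ directly into the construction of \cite{RS}; the paper exhausts $X$ by the subSFTs $X_n$ of points whose $B$-components have size less than $n$, shows each $X_n$ has the UFP and $h(X_n)\to h(X)$, and applies universality to $X_n$. Your instinct for (D) (exponential decay of the coupling disagreement implies a very-weak-Bernoulli-type condition, hence Bernoullicity) matches the paper's use of quite weak Bernoulli, but it again rests on the $G$-contour reduction that is the real content of (A).
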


By Theorem~\ref{equiv}, all such properties also hold for nearest neighbor $\mathbb{Z}^d$ SFTs with topological entropy close enough to the logarithm of their alphabet size:

\begin{theorem}\label{mainresult2}
For any $d$ and $\beta_d = \frac{\epsilon_d^2}{4d} = \frac{1}{3^6 2^{24} d^{17}}$, any nearest neighbor $\mathbb{Z}^d$ SFT $X$ with alphabet $A$ for which $h(X) > (\log |A|) - \beta_d$ has properties {\rm(A)-(D)} from Theorem~\ref{mainresult}.
\end{theorem}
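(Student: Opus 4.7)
The plan is to obtain Theorem~\ref{mainresult2} as an immediate corollary of Theorem~\ref{mainresult} combined with the sufficient-condition half of Theorem~\ref{equiv}. The first step is to translate the entropy hypothesis into $\epsilon$-fullness: the proof of Theorem~\ref{equiv} furnishes the explicit estimate $\beta(\epsilon,d) = \frac{1}{2}\log\frac{2d}{2d-\epsilon^2} > \frac{\epsilon^2}{4d}$, which is exactly what is needed here. Substituting $\epsilon = \epsilon_d$ yields $\beta(\epsilon_d,d) > \epsilon_d^2/(4d) = \beta_d$, and since a larger value of $\beta$ corresponds to a weaker hypothesis on $h(X)$, the assumption $h(X) > \log|A| - \beta_d$ forces $h(X) > \log|A| - \beta(\epsilon_d,d)$. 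Theorem~\ref{equiv} then concludes that $X$ is $\epsilon_d$-full.

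Once $\epsilon_d$-fullness is established, I would simply invoke Theorem~\ref{mainresult} to obtain properties (A)--(D) for $X$. The only remaining verification is arithmetic: $\beta_d = \epsilon_d^2/(4d) = (3^3 \cdot 2^{11} \cdot d^8)^{-2} \cdot (4d)^{-1} = 1/(3^6 \cdot 2^{24} \cdot d^{17})$, which matches the constant asserted in the theorem statement. No real obstacle is present; the argument is a one-line composition of two previously established results, and the role of Theorem~\ref{mainresult2} is essentially to record the explicit polynomial decay rate $\beta_d = O(d^{-17})$ advertised in the introduction rather than to introduce any new dynamical content.
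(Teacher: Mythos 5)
Your proposal is correct and matches the paper's (implicit) argument exactly: the paper derives Theorem~\ref{mainresult2} from Theorem~\ref{mainresult} via the first half of Theorem~\ref{equiv}, using precisely the estimate $\beta(\epsilon,d) = \frac{1}{2}\log\frac{2d}{2d-\epsilon^2} > \frac{\epsilon^2}{4d}$ that was recorded ``for future reference'' in the proof of Theorem~\ref{equiv}. The direction of the inequality and the arithmetic giving $\beta_d = 1/(3^6 2^{24} d^{17})$ are both handled correctly.
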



The next lemma will be fundamental to almost all future arguments, and deals with the conditional measure w.r.t. a measure of maximal entropy $\mu$ of a configuration consisting only of $B$-letters given a boundary configuration. We would like to be able to say that such configurations always have low conditional probability, but this depends on the boundary. For instance, in the SFT of Example~\ref{vertlines}, conditioning on a boundary $\delta \in A^{\partial [-n,n]^2}$ for which $\delta(0,-n) = \delta(0,n) = 0$ actually forces an entire column of $0$s! For this reason, we for now deal only with the case where we condition on a boundary consisting only of $G$-letters.

\begin{lemma}\label{unlikelyB}
For any $\epsilon < \frac{1}{4d+6}$, any $\epsilon$-full nearest neighbor $\mathbb{Z}^d$ SFT $X$, any set $S \subseteq \mathbb{Z}^d$, any set $T \subseteq S$, any $\delta \in G^{\partial S}$, and any measure of maximal entropy $\mu$ on $X$, 
\[
\mu([B^T] \ | \ [\delta]) < N^{-|T|},
\]
where $N$ is $\lfloor \frac{1}{2} (\epsilon^{-1} - 4d - 4) \rfloor$.
\end{lemma}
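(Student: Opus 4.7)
The plan is to translate the conditional probability into a counting ratio via Proposition~\ref{P1} and then construct a $B\to G$ swap injection. Since $\mu(\,\cdot\mid[\delta])$ is uniform on $V:=\{x\in L_S(X):x\delta\in LA(X)\}$, letting $V_B:=\{x\in V:x|_T\in B^T\}$ gives $\mu([B^T]\mid[\delta])=|V_B|/|V|$, and it suffices to produce a faithful injection from $V_B\times[N]^{|T|}$ into $V$.

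For each $x\in V_B$, let $U:=B(x)=\{s\in S:x(s)\in B\}\supseteq T$. The key geometric fact is that every site of $\partial U$ carries a $G$-letter, since $x|_{S\setminus U}\in G^{S\setminus U}$ by definition of $U$ and $\delta\in G^{\partial S}$ by hypothesis. I process $U$ in a fixed (say lexicographic) order $u_1,\ldots,u_{|U|}$ and inductively replace $x(u_i)\in B$ with some $g_i\in G$. Let $L_i$ denote the set of $G$-letters compatible with all already-$G$-valued neighbors of $u_i$, namely those lying in $\partial U\cup\{u_1,\ldots,u_{i-1}\}$. By $\epsilon$-fullness, each such neighbor rules out at most $\epsilon|A|$ letters, so
\[
|L_i|\geq|A|\bigl(1-(2d+1)\epsilon\bigr).
\]
The hypothesis $\epsilon<\frac{1}{4d+6}$ together with $N=\lfloor\frac{1}{2}(\epsilon^{-1}-4d-4)\rfloor$ are arranged precisely so that $|L_i|\geq|B|\cdot N$, admitting a fixed injection $\phi_i:B\times[N]\hookrightarrow L_i$. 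Set $g_i:=\phi_i(x(u_i),c_i)$ for $u_i\in T$ (using the choice bit $c_i$) and $g_i:=\phi_i(x(u_i),1)$ for $u_i\in U\setminus T$. Pairwise compatibility of the $g_i$'s is automatic because each later $g_j$ is chosen compatible with the earlier ones; combined with Corollary~\ref{LA=GA} applied to the resulting all-$G$ configuration $z$ (whose inner boundary in $S$ then consists entirely of $G$-letters), this yields $z\in V$. Running the process backward, at each step the set $L_i$ can be reconstructed from the later-decoded values, and inverting $\phi_i$ recovers $(x(u_i),c_i)$; thus $(x,c)$ is faithfully encoded in $(z,U)$.

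The main obstacle I anticipate is that $U=B(x)$ depends on $x$ and is not a priori determined by the image $z$, so summing over possible $U$'s naively contributes an unwanted factor of $(1+1/N)^{|S\setminus T|}$. The remedy is to exploit the comfortable slack $|L_i|-|B|\cdot N\geq|A|/2$, which is precisely where the factor $\frac{1}{2}$ in the definition of $N$ is spent: by enlarging each encoding to reserve a distinguished ``marker'' image signalling ``$u_i\notin U$'' and processing all of $S$ rather than only $U$, the set $U$ can be read off from $z$ alone, eliminating the $|S|$-dependent factor and yielding the strict inequality $\mu([B^T]\mid[\delta])<N^{-|T|}$.
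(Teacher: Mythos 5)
Your reduction to counting via Proposition~\ref{P1}, the block-of-$N$ replacement scheme, and the arithmetic $|L_i|\geq |B|N+|A|/2$ all match the paper's proof, and you have correctly isolated the crux: the set $U$ of replaced sites is not determined by the image $z$, so the naive map is not injective. But your proposed remedy does not close this gap. If the ``marker'' at a site $s_i\notin U$ is a single reserved letter of $L_i$, then $z(s_i)$ no longer records the original $G$-letter $x(s_i)$ and injectivity is lost on the $G$-part of $x$. If instead you keep $z(s_i)=x(s_i)$ at such sites, then for $U$ to be readable from $z$ the replacement images $\phi_i(B\times[N])$ would have to avoid every letter that can occur as an unreplaced $G$-letter, i.e.\ essentially all of $G\cap L_i$, which is exactly where $\phi_i$ takes its values. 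The remaining option, re-encoding the unreplaced $G$-letters as well, requires $|G|+|B|N$ distinct images inside $L_i\subseteq A$; in the extremal case $|G|\approx(1-\epsilon)|A|$ and $|B|N\approx|A|/2$ this exceeds $|A|$. So the $|A|/2$ of slack cannot buy determinacy of $U$ from $z$, and the decoding step remains unproved.

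The paper never recovers $U$ from $z$. Instead it splits $G$ into two halves $G_I$ and $G_B$, each of size at least $|A|\epsilon(N+2d+1)$, and the replacement letter at a site $s$ encodes, besides the original $B$-letter and the choice index, whether $s$ lies on the boundary or in the interior of its maximal connected component of $B$-sites. It then proves directly that the image families $f(u)$ and $f(u')$ are disjoint for $u\neq u'$: if the component structures coincide, the first disagreement forces different replacement letters; if they differ, then since each component contains a site of $T$ (so no component of $u$ is disjoint from all components of $u'$) there are two unequal overlapping components, and a site that is boundary in one and interior in the other receives a letter from $G_B$ in one encoding and from $G_I$ in the other. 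To salvage your version you need a device of this kind --- tagging geometric information about the component into the replacement alphabet and arguing disjointness of image sets pairwise --- rather than trying to make $U$ readable site-by-site from $z$.
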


\begin{proof}

Consider any such $\epsilon$, $X$, $S$, $T$, and $\delta$, and define $N = \lfloor \frac{1}{2} (\epsilon^{-1} - 4d - 4) \rfloor$; since $\epsilon < \frac{1}{4d+6}$, $N \geq 1$ and the inequality we wish to prove is nontrivial. As before, we can reduce to the case where $|A| \epsilon \geq 1$ since otherwise $X$ is forced to be a full shift. By $\epsilon$-fullness of $X$, $|G| > |A| (1 - \epsilon) \geq |A| (1 - 2\epsilon) + 1$, and by definition of $N$, $1-2\epsilon > 2 \epsilon(N + 2d + 1)$. Therefore, $|G| > 2 |A| \epsilon (N + 2d + 1) + 1 \geq 2 \lceil |A| \epsilon (N + 2d + 1) \rceil$. We can then partition $G$ into two pieces, call them $G_I$ and $G_B$, each of size at least  $|A| \epsilon (N + 2d + 1)$, and fix any orderings on the elements of $G_I$, $G_B$, and $B$. 

Consider any configuration $u \in L_{S \cup \partial S}(X) \cap \langle B^T \rangle \cap \langle \delta \rangle$, i.e. $u$ is globally admissible with shape $S \cup \partial S$, $u|_T$ consists entirely of $B$-letters, and $u|_S = \delta$. Then the locations of the $B$-letters within $u$ can be partitioned into maximal connected components $C_i(u)$, $1 \leq i \leq k(u)$ (say we order these lexicographically by least element), and we denote the subconfigurations of $u$ occupying these components by $B_i(u) = u|_{C_i}$, $1 \leq i \leq k(u)$. We will now define a family of configurations $f(u) \subseteq L_{S \cup \partial S}(X) \cap \langle \delta \rangle$.

Begin by removing all $B_i(u)$ from $u$, defining a new configuration $v(u)$ with shape $(S \cup \partial S) \setminus \bigcup C_i(u)$ which consists only of $G$-letters. We fill the holes with shapes $C_i(u)$ in order, starting with $C_1(u)$. For each $i$, we order the sites in $C_i(u)$ lexicographically, and choose $G$-letters to fill them, one by one. We will do this in such a way that at each step, regardless of what letters have been assigned, we have $N$ choices of letters to use, and so the total number of configurations we define by filling all holes, $|f(u)|$, will be at least $N^{|T|}$.

Suppose that we wish to fill a site $s \in C_i(u)$, meaning that each $C_j(u)$ for $j<i$ has been filled, and all sites lexicographically less than $s$ in $C_i(u)$ have been filled with $G$-letters. Then, consider all $G$-letters which can legally fill the site $s$ given the letters already assigned within $S \cup \partial S$. Since all letters assigned are $G$-letters, by Lemma~\ref{manyfillings}, there are at least $|A| (1 - (2d+1) \epsilon)$ choices. If $s \in \partial C_i(u)$, then will use only letters from $G_B$, and if $s \in C_i(u) \setminus \partial C_i(u)$, then we will use only letters from $G_I$. In either case, though, since $|G_I|$ and $|G_B|$ are greater than $|A|\epsilon(N+2d+1)$, there are at least $|A| N \epsilon > N|B|$ possible choices. If $u(s)$ was the $k$th letter of $B$ with respect to the previously defined ordering on $B$, then we use any of the $N$ letters between the $((k-1)N + 1)$th and $kN$th letters (inclusive) in either $G_B$ or $G_I$ with respect to the previously defined orderings on these sets. Denote by $f(u)$ the set of all configurations in $L_{S \cup \partial S}(X) \cap \langle \delta \rangle$ obtainable by using this filling algorithm to fill all of the sites of $T$ in order. Now for each site $s \in \bigcup C_i(u)$, each configuration in $f(u)$ has a letter at $s$ which encodes the following information: whether $s$ was a boundary site or an interior site within its $C_i(u)$ (encoded by whether we chose a letter from $G_B$ or $G_I$), and the $B$-letter $u(s)$ which appeared at $s$ in $u$ (encoded by which of the possible letters in $G_B$ or $G_I$ we used).

We now show that for any configurations $u \neq u'$ in $L_{S \cup \partial S}(X) \cap \langle B^T \rangle \cap \langle \delta \rangle$, $f(u)$ and $f(u')$ are disjoint. First, we deal with the case where $k(u) = k(u')$ and $C_i(u) = C_i(u')$ for $1 \leq i \leq k(u) = k(u')$. (Since they are equal, we just write $C_i$ for $C_i(u) = C_i(u')$ and $k$ for $k(u) = k(u')$.) Since $u \neq u'$, $u$ and $u'$ either disagree somewhere outside the union of the $C_i$ or somewhere inside. If there is a disagreement somewhere outside, then since all configurations in $f(u)$ and $f(u')$ agree with $u$ and $u'$ respectively outside the union of the $C_i$, it is obvious that $f(u)$ and $f(u')$ are disjoint. If there is a disagreement inside the union of the $C_i$, then take $j$ minimal so that there is a disagreement in $C_j$, and take $s \in C_j$ the minimal site lexicographically for which $u(s) \neq u'(s)$. For a contradiction, assume that there is a configuration $w$ in $f(u) \cap f(u')$. Since all $C_i$ are identical and since $u$ and $u'$ agree outside the union of the $C_i$, we know that exactly the same sites had been filled, with exactly the same letters, when $w(s)$ was chosen in the filling procedure defining $f(u)$ as when $w(s)$ was chosen in the filling procedure defining $f(u')$. But this is a contradiction; since $u(s) \neq u'(s)$ and the same set of letters was available to fill $s$ in both procedures, the same letter could not possibly have been a legal choice in both procedures.

Now we deal with the case where either $k(u) \neq k'(u)$ or $k(u) = k'(u)$ and $C_i(u) \neq C_i(u')$ for some $1 \leq i \leq k(u) = k(u')$. This implies that either there exists $C_j(u)$ disjoint from all $C_i(u')$ (or the same statement with $u$ and $u'$ reversed), or there exist nonequal $C_j(u)$ and $C_{j'}(u')$ which have nonempty intersection (or the same statement with $u$ and $u'$ reversed). The first case is impossible since by definition, each $C_j(u)$ contains some site in $T$, and each site in $T$ is contained in some $C_i(u')$ (and the same statement is true when $u$ and $u'$ are reversed). Suppose then that there exist $j,j'$ so that $C_j(u) \neq C_{j'}(u')$ and $C_j(u) \cap C_{j'}(u') \neq \varnothing$. Then there exists $s$ which is in the boundary of $C_j(u)$ and the interior of $C_{j'}(u')$, or vice versa. This means that when $s$ is assigned in the filling procedures defining $f(u)$ and $f(u')$, either $w(s)$ must be from $G_B$ in the former case and $G_I$ in the latter, or vice versa. Either way, it ensures that $f(u) \cap f(u') = \varnothing$.

We have shown that all of the sets $f(u)$, $u \in L_{S \cup \partial S}(X) \cap \langle B^T \rangle \cap \langle \delta \rangle$, are disjoint. Since each is a subset of $L_{S \cup \partial S}(X) \cap \langle \delta \rangle$ and each has size at least $N^{|T|}$, we have shown that 
\[
|L_{S \cup \partial S}(X) \cap \langle \delta \rangle| \geq N^{|T|} |L_{S \cup \partial S}(X) \cap \langle B^T \rangle \cap \langle \delta \rangle|.
\]
Recall that since $\mu$ is a measure of maximal entropy for $X$, by Proposition~\ref{P1} it is an MRF with uniform conditional probabilities $\Lambda^{\delta}$. Therefore,
\[
\mu([B^T] \ | \ [ \delta ]) = \Lambda^{\delta}(\langle B^T \rangle) = \frac{|L_{S \cup \partial S}(X) \cap \langle B^T \rangle \cap \langle \delta \rangle|}{|L_{S \cup \partial S}(X) \cap \langle \delta \rangle|} < N^{-|T|}.
\]

\end{proof}

\begin{remark}\label{extrasites}
For future reference, we note that since $\mu$ is an MRF, Lemma~\ref{unlikelyB} remains true if one additionally conditions on some sites outside $S \cup \partial S$, even if these extra sites are taken to be $B$-letters.
\end{remark} 

We are now prepared to prove Theorem~\ref{mainresult}. We fix $d$, define $\epsilon_d = \frac{1}{3^3 2^{11} d^8}$, and consider any $\epsilon_d$-full nearest neighbor $\mathbb{Z}^d$ SFT $X$. We usually suppress the dependence on $d$ and just write $\epsilon = \epsilon_d$ in the sequel.

\

\begin{proof}[Proof of (A)] Recall that for any finite $S \subseteq \mathbb{Z}^d$ and $\delta \in L_{\partial S}(X)$, $\Lambda^{\delta}$ is the conditional distribution on $S$ given $\delta$ associated to any measure of maximal entropy, which is uniformly distributed over all configurations $w \in A^S$ which form a locally admissible configuration when combined with $\delta$. We will show that there is only a single shift-invariant measure $\mu$ with these conditional distributions, implying that there is only a single measure of maximal entropy. Our method is similar to that of \cite{vdBM} in that we construct a coupling of $\Lambda^{\delta}$ and $\Lambda^{\delta'}$ for pairs of boundaries $\delta \neq \delta'$ of large connected shapes, and show that this coupling gives a high probability of agreement far from $\delta$ and $\delta'$, implying that $\Lambda^{\delta}$ and $\Lambda^{\delta'}$ behave similarly far from $\delta$ and $\delta'$. (Informally, the influence of a boundary decays with distance.) 
However, we must begin with the special case where $\delta$ and $\delta'$ consist entirely of $G$-letters. 

Choose any finite connected sets $C, C' \subset \mathbb{Z}^d$ with nonempty intersection, any site $s \in C \cap C'$, and any $\delta \in L_{\partial C}(X)$ and $\delta' \in L_{\partial C'}(X)$ consisting only of $G$-letters. Define $D:= d(s, \partial C \cup \partial C')$. We will construct a coupling $\lambda$ of $\Lambda^{\delta}$ and $\Lambda^{\delta'}$ which gives very small probability to a disagreement at $s$ (when $D$ is large).

Define $\overline{C} = C \cup \partial C$ and $\overline{C'} = C' \cup \partial C'$. Fix any ordering on the set $\overline{C} \cup \overline{C'}$; from now on when we talk about any notion of size for sites in $\overline{C} \cup \overline{C'}$, it is assumed we are speaking of this ordering. For convenience, we will extend configurations on $C$ and $C'$ to configurations on $\overline{C}$ and $\overline{C'}$ respectively by appending $\delta$ and $\delta'$ respectively. Therefore, $\lambda$ will be defined on pairs of configurations $(w_1, w_2)$ where $w_1$ has shape $\overline{C}$ and $w_2$ has shape $\overline{C'}$; the marginalization of $\lambda$ which leads to a true coupling of $\Lambda^{\delta}$ and $\Lambda^{\delta'}$ should be clear. We will define $\lambda$ on one site at a time, assigning values to both $w_1(s)$ and $w_2(s)$ when $s$ is in $\overline{C} \cap \overline{C'}$, and just assigning one of these two values if $s$ is only one of the sets. We use $\zeta_1$ and $\zeta_2$ to denote the (incomplete) configurations on $\overline{C}$ and $\overline{C'}$ respectively at any step. We therefore begin with $\zeta_1 = \delta$ and $\zeta_2 = \delta'$. At any step of the construction, we use $W$ to denote the set of vertices in $\overline{C} \cup \overline{C'}$ on which either $\zeta_1$ or $\zeta_2$ have already received values. (In particular, at the beginning, $W = \partial C \cup \partial C'$.) This means that $\zeta_1$ is always defined on $W \cap \overline{C}$, and $\zeta_2$ is always defined on $W \cap \overline{C'}$. At an arbitrary step of the construction, we choose the next site $s$ on which to assign values in $\zeta_1$ and/or $\zeta_2$ as follows:

\

(i) If there exists any site in $(\overline{C} \cup \overline{C'}) \setminus W$ which is adjacent to a site in $W$ at which either $\zeta_1$ or $\zeta_2$ has been assigned a $B$-letter, then take $s$ to be the smallest such site.

(ii) If (i) does not apply, but there exists a site in $(\overline{C} \cup \overline{C'}) \setminus W$ which is adjacent to a site in $\overline{C} \cap \overline{C'} \cap W$ (i.e. a site at which both $\zeta_1$ and $\zeta_2$ have been defined), and their values disagree, then take $s$ to be the smallest such site.

(iii) If (i) and (ii) do not apply, but there exists a site in $(\overline{C} \cup \overline{C'}) \setminus W$ which is not in $\overline{C} \cap \overline{C'}$, then take $s$ to be the smallest such site.

(iv) If none of (i)-(iii) apply, then take $s$ to be the smallest site in $(\overline{C} \cup \overline{C'}) \setminus W$.

\

Now we are ready to define $\lambda$ on $s$. If $s$ is in $\overline{C}$ but not $\overline{C'}$ (i.e. chosen according to case (iii)), then assign $\zeta_1(s)$ randomly according to the marginalization of the distribution $\Lambda^{\zeta_1}$ to $s$, and if $s$ is in $\overline{C'}$ but not $\overline{C}$, then assign $\zeta_2(s)$ randomly according to the marginalization of the distribution $\Lambda^{\zeta_2}$ to $s$. (Here we are slightly abusing notation: $\Lambda^{\eta}$ is technically only defined for $\eta$ a boundary configuration, and here we may be conditioning on more than a boundary. The meaning should be clear though: $\Lambda^{\zeta_1}$, simply represents the uniform conditional distribution on $A^{\overline{C} \setminus W}$ given $\zeta_1$, and $\Lambda^{\zeta_2}$ is similarly defined.)

If $s \in \overline{C} \cap \overline{C'}$ (i.e. chosen according to case (i) or case (ii)), then assign $\zeta_1(s)$ and $\zeta_2(s)$ according to an optimal coupling of the marginalizations of the distributions $\Lambda^{\zeta_1}$ and $\Lambda^{\zeta_2}$ to $s$. Since $\lambda$ is defined sitewise, and at each step is assigned according to $\Lambda^{\zeta_1}$ in the first coordinate and $\Lambda^{\zeta_2}$ in the second, the reader may check that it is indeed a coupling of $\Lambda^{\delta}$ and $\Lambda^{\delta'}$. The key property of $\lambda$ is the following:

\begin{fact}\label{path}
For any site $s \in \overline{C} \cap \overline{C'}$, $\lambda$-a.s., $w_1(s) \neq w_2(s)$ if and only if there exists a path $\gamma$ from $s$ to $\partial C \cup \partial C'$ contained within $\overline{C} \cap \overline{C'}$ such that for each site $t \in \gamma$, either one of $w_1(t)$ or $w_2(t)$ is a $B$-letter, or $w_1$ and $w_2$ disagree at $t$, i.e. $w_1(t) \neq w_2(t)$.
\end{fact}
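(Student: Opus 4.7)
The plan is to prove Fact~\ref{path} by inducting on the site-filling order used to define $\lambda$. Call a site $t \in \overline{C} \cap \overline{C'}$ \emph{bad} if one of $w_1(t), w_2(t)$ is a $B$-letter, or $w_1(t) \neq w_2(t)$; the Fact then asserts that $\lambda$-a.s., $w_1(s) \neq w_2(s)$ iff $s$ is connected to $\partial C \cup \partial C'$ through bad sites in $\overline{C} \cap \overline{C'}$. The crucial observation is that, by the nearest-neighbor structure of $\Lambda$, whenever a site $s \in \overline{C} \cap \overline{C'}$ is assigned by an optimal coupling, the two marginal distributions of the coupling at $s$ are determined entirely by the already-assigned neighbors of $s$ in $\zeta_1$ and $\zeta_2$. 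Hence, if every neighbor of $s$ has been assigned, lies in $\overline{C} \cap \overline{C'}$, and carries the same $G$-letter value in $\zeta_1$ and $\zeta_2$, then the two marginals coincide and the optimal coupling produces agreement $\lambda$-a.s.

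For the forward direction, suppose $w_1(s) \neq w_2(s)$. I build a bad path from $s$ back to the boundary inductively. At each step, the current endpoint $s_i$ is bad, so when $s_i$ was processed, either (a) some already-assigned neighbor of $s_i$ was itself bad, or (b) some already-assigned neighbor of $s_i$ lay in only one of $\overline{C}, \overline{C'}$ and so caused the two marginals to differ. In case (a), I take the bad neighbor as $s_{i+1}$ and continue, terminating the path if $s_{i+1}$ lies in $\partial C \cup \partial C'$; in case (b), I trace back through the connected component of case-(iii) single-sided assignments adjacent to $s_i$ until that chain either exits at $\partial C \cup \partial C'$ or reconnects to a bad intersection site, which then becomes $s_{i+1}$. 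Termination of the induction is guaranteed because each step strictly decreases the processing order index, and only finitely many sites were ever assigned.

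The principal obstacle will be cleanly handling the case-(iii) detours: a disagreement at a site in $\overline{C} \cap \overline{C'}$ may trace back through long chains of single-sided assignments in $\overline{C} \triangle \overline{C'}$, and I must verify that such chains always either terminate at $\partial C \cup \partial C'$ or loop back into the intersection at a bad site. This should follow from the finite geometry of $\overline{C} \cup \overline{C'}$ together with the priority rule (i)-(iv), which forces bad clusters to be resolved greedily. For the backward direction of the Fact, I would argue by reverse induction along the processing order: if the bad component of $s$ inside $\overline{C} \cap \overline{C'}$ does not reach $\partial C \cup \partial C'$, then the interface of this component is separated from the rest of $\overline{C} \cap \overline{C'}$ by sites with matching $G$-letter neighbors only, so by the first-paragraph observation the optimal couplings used to fill these interface sites produce agreement almost surely; propagating this agreement inward through the component then forces $w_1(s) = w_2(s)$, contradicting that $s$ was a disagreement site.
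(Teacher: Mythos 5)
Your opening ``crucial observation'' is false, and it is the load-bearing step of both halves of your argument. The marginal of $\Lambda^{\zeta_1}$ at a site $s$ is the marginalization to $s$ of the uniform distribution over all locally admissible completions of $\zeta_1$; by the Markov property this depends on \emph{all} assigned sites adjacent to the unassigned connected component containing $s$, and on the shape of that component --- not merely on the already-assigned neighbors of $s$ (it reduces to the neighbors only in the rare event that every neighbor of $s$ is already assigned). In particular, a site whose assigned neighbors are all agreeing $G$-letters, or which has no assigned neighbors at all, can still receive different marginals from $\Lambda^{\zeta_1}$ and $\Lambda^{\zeta_2}$ because of a $B$-letter or a disagreement farther away on the frontier of the unassigned region. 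Consequently your forward induction stalls: when $s_i$ was processed it need not have had a bad or single-sided assigned neighbor, so your dichotomy (a)/(b) is not exhaustive. The paper closes exactly this hole with a global rather than local argument: if no bad path from $s$ reaches $\partial C \cup \partial C'$, then $s$ is enclosed by a contour $\Gamma$ on which $w_1=w_2\in G^{\Gamma}$; the priority rules force the \emph{first} site inside $\Gamma$ to be assigned under case (iv), and at that instant the rules guarantee that the unassigned sets for $\zeta_1$ and $\zeta_2$ coincide and that every assigned site bordering the unassigned region carries an agreeing $G$-letter, so the \emph{entire} conditional distributions $\Lambda^{\zeta_1}$ and $\Lambda^{\zeta_2}$ are equal; the optimal couplings are therefore supported on the diagonal from then on, forcing agreement throughout the interior of $\Gamma$. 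That appeal to equality of the full conditional distributions, triggered by reaching case (iv), is the idea your proposal is missing, and your ``propagating agreement inward through local marginals'' in the final paragraph fails for the same reason.

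Two smaller points. The case-(iii) ``detours'' you identify as the principal obstacle are both unnecessary and disallowed: if $s_i\in\overline{C}\cap\overline{C'}$ has a neighbor outside $\overline{C}\cap\overline{C'}$, then $s_i$ already lies in $\partial C\cup\partial C'$ (any neighbor of a site of $C'$ lies in $\overline{C'}$, so $s_i\notin C'$ forces $s_i\in\partial C'$, and symmetrically for $C$), so the path simply terminates there; a detour through $\overline{C}\triangle\overline{C'}$ would in any case violate the requirement that $\gamma\subseteq\overline{C}\cap\overline{C'}$. Also, your ``backward direction'' is really the contrapositive of your forward direction (no bad path implies agreement), so you have not engaged the literal ``if'' direction of the Fact at all --- though that direction is never used in the rest of the paper, so the substantive content is the single implication you are attempting, which must be proved via the global contour argument above.
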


\begin{proof}
The ``if'' direction is trivial. For the ``only if'' direction, assume for a contradiction that $w_1(s) \neq w_2(s)$ and that no such path $\gamma$ exists. Then there is a closed contour $\Gamma$ containing $s$ and contained within $\overline{C} \cap \overline{C'}$ so that $w_1|_{\Gamma} = w_2|_{\Gamma} \in G^{\Gamma}$. Denote by $F$ the set of sites inside $\Gamma$. Then regardless of the order of the sites on which $\lambda$ is defined, the first site in $F$ which is assigned is done so by case (iv); since it is the first site in $F$ to be assigned, its neighbors are either unassigned or in $\Gamma$, and so cases (i)-(iii) cannot apply. Call this site $t$.

Consider the state of $\lambda$ when $t$ is assigned under case (iv). The sets of undefined sites for $\zeta_1$ and $\zeta_2$ must be the same (since case (iii) was not applied), and every site in $\overline{C} \cup \overline{C'}$ adjacent to a site in $(\overline{C} \cup \overline{C'}) \setminus W$ must be a location at which $\zeta_1$ and $\zeta_2$ agree. (since case (ii) was not applied) Then the distributions $\Lambda^{\zeta_1}$ and $\Lambda^{\zeta_2}$ are identical. This means that their optimal coupling has support contained in the diagonal, and $\zeta_1(t) = \zeta_2(t)$ $\lambda$-a.s. It is then easy to see that $\lambda$-a.s., we remain in case (iv) for the remainder of the construction. Therefore, $\lambda$-a.s., $\zeta_1$ and $\zeta_2$ agree on all of $F$. Since $s \in F$, this clearly contradicts $w_1(s) \neq w_2(s)$.

\end{proof}

We will now show that when $D$ is large, the $\lambda$-probability of such a path (consisting entirely of disagreements and sites where $w_1$ or $w_2$ contains a $B$-letter) is very low. Consider any $(w_1, w_2)$ in the support of $\lambda$ containing $\gamma$ a path from $s$ to $\partial C \cup \partial C'$ contained within $\overline{C} \cap \overline{C'}$, which consists entirely of disagreements and sites where $w_1$ or $w_2$ has a $B$-letter. By passing to a subpath if necessary, we can assume that $\gamma$ is such a path of minimal length, which clearly implies that $\gamma$ is contained entirely within $C \cap C'$. Denote the length of $\gamma$ by $L$; clearly $L \geq D$. For technical reasons, we denote by $L' = 7 \lceil \frac{L}{7} \rceil \geq L$ the smallest multiple of $7$ greater than or equal to $L$ so that we can divide by $7$ in the proof without dealing with floor or ceiling functions. We quickly note a useful fact about $\gamma$: there cannot exist a site $t$ at which $w_1$ or $w_2$ contains a $B$-letter which is adjacent to three different sites on $\gamma$. Assume for a contradiction that this could happen (and that the sites adjacent to $t$ on $\gamma$ are $p,q,r$, visited in that order when $\gamma$ is traversed from $s$ to $\partial C \cup \partial C'$.) Then the path obtained by replacing the portion $p \ldots q \ldots r$ of $\gamma$ by $p t r$ would be shorter than $\gamma$, violating minimality of the length of $\gamma$. 
We need a definition:

\begin{definition} A site $t \in \gamma$ is {\bf $B$-proximate} if there is $q \in N_t \cup \{t\} \subset \gamma'$ for which $w_1(q)$ or $w_2(q)$ is a $B$-letter. 
\end{definition}

We now separate into two cases depending on whether the number of sites in $\gamma$ which are $B$-proximate is greater than or equal to $\frac{6L'}{7}$ or not. 

\

\noindent
\textbf{Case 1: $\gamma$ contains at least $\frac{6L}{7}$ $B$-proximate sites} 

It was noted earlier that any $B$-letter can be adjacent to at most two sites in $\gamma$, and so any $B$-letter can ``induce'' at most three $B$-proximate sites on $\gamma$ (up to two neighbors, and possibly itself.) We can therefore pass to a subset of $\frac{2L}{7}$ $B$-proximate sites where each is adjacent to a different $B$-letter in either $w_1$ or $w_2$, and again pass to a subset in either $w_1$ or $w_2$ (w.l.o.g. we say $w_1$) of $\lceil \frac{L}{7} \rceil = \frac{L'}{7}$ $B$-proximate sites on $\gamma$, each of which is adjacent to a different $B$-letter. 

Denote by $S$ this set of $\frac{L'}{7}$ sites on $\gamma$, and by $T$ the set of $\frac{L'}{7}$ neighboring $B$-letters in $w_1$. By Lemma~\ref{unlikelyB}, $\Lambda^{\delta}(\langle B^T \rangle) < N^{-\frac{L'}{7}}$, where $N = \lfloor \frac{1}{2} (\epsilon^{-1} - 4d - 4) \rfloor$. Since $\epsilon < \frac{1}{8d+8}$, $N > \frac{1}{5\epsilon}$. The number of possible such $T$ for any given $\gamma$ of length $L$ is bounded from above by ${L' \choose \frac{L'}{7}} (2d)^{\frac{L'}{7}} \leq (36d)^{\frac{L'}{7}}$. Therefore, the $\Lambda^{\delta}$-probability that there exists any such $T$ for a fixed $\gamma$ is bounded from above by $(180 d \epsilon)^{\frac{L'}{7}}$. Since there are fewer than $(2d)^{L'}$ possible $\gamma$ of length $L$, the $\Lambda^{\delta}$-probability that there exists any path $\gamma$ and $T$ as defined above is less than
\[
\sum_{L = D}^{\infty} \left(180 d (2d)^{7} \epsilon \right)^{\frac{L'}{7}} \leq \sum_{L = D}^{\infty} 2^{-\frac{L}{7}} = \frac{1}{1-\sqrt[7]{0.5}} 2^{-\frac{D}{7}}
\]
since $\epsilon \leq \frac{1}{360 \cdot 2^7 d^8}$. The same is true of $\Lambda^{\delta'}$, and since $\lambda$ is a coupling of $\Lambda^{\delta}$ and $\Lambda^{\delta'}$, the $\lambda$-probability that there exists any path $\gamma$ with at least $\frac{6L}{7}$ of its sites $B$-proximate is less than $\frac{2}{1-\sqrt[7]{0.5}} 2^{-\frac{D}{7}}$.

\

\noindent
\textbf{Case 2: $\gamma$ contains fewer than $\frac{6L}{7}$ $B$-proximate sites} 

In this case, there exists $R \subset \gamma$, $|R| = \lceil \frac{L}{7} \rceil = \frac{L'}{7}$, such that no site in $R$ is $B$-proximate. 
Since $\gamma$ consists entirely of sites where either one of $w_1$ and $w_2$ is a $B$-letter or $w_1$ and $w_2$ disagree, this implies that for each $r \in R$, $w_1(r) \neq w_2(r)$. Also, by the definition of $B$-proximate, for each $r \in R$, both $w_1|_{N_r}$ and $w_2|_{N_r}$ contain only $G$-letters. Order the elements of $R$ as $r_1, \ldots, r_{\frac{L'}{7}}$. Our fundamental claim is that for any $i \in [1,\frac{L'}{7}]$, 
\begin{multline}\label{unlikelydis}
\lambda \big(w_1(r_i) \neq w_2(r_i) \ | \ w_1(r_j) \neq w_2(r_j), 1 \leq j < i \\
\textrm{and } w_1|_{N_{r_{j'}}}, w_2|_{N_{r_{j'}}} \in G^{N_{r_{j'}}}, 1 \leq j' \leq i\big) < 12d\epsilon.
\end{multline}
To prove (\ref{unlikelydis}), we fix some $i \in [1,\frac{L'}{7}]$ and condition on the facts that $w_1(r_j) \neq w_2(r_j)$ for $1 \leq j < i$, and that $w_1|_{N_{r_{j'}}}, w_2|_{N_{r_{j'}}} \in G^{N_{r_{j'}}}$ for $1 \leq j' \leq i$. Then the conditional $\lambda$-distribution on $r_i$ is a weighted average of the $\lambda$-distribution assigned at site $r_i$, taken over all possible evolutions of $w_1$ and $w_2$ in the definition of $\lambda$. For any such evolution of $w_1$ and $w_2$, at the step where $w_1(r_i)$ and $w_2(r_i)$ were (simultaneously) assigned, no unassigned site in either $\overline{C}$ or $\overline{C'}$ was adjacent to an assigned $B$-letter. (Otherwise, the smallest such site lexicographically would be used instead of $r_i$, under case (i) in the definition of $\lambda$.) This means that at this step, $\underline{\partial}(\overline{C} \cap W)$ and $\underline{\partial}(\overline{C'} \cap W)$ both consist entirely of $G$-letters.


Therefore, independently of which evolution of $w_1$ and $w_2$ we consider, for any possible $\zeta_1$ when $r_i$ was assigned, Lemma~\ref{unlikelyB} implies that $\Lambda^{\zeta_1}(\langle G^{N_{r_i}} \rangle) > 1 - \frac{2d}{N} > 1 - 10d\epsilon$. This means that for any possible $\zeta_1$ when $r_i$ was assigned, $\Lambda^{\zeta_1}|_{r_i}$ was a weighted average of the conditional distributions $\Lambda^{\zeta_1}(\langle x(r_i) \rangle \ | \ \langle x|_{N_{r_i}} \rangle)$, where at least $1 - 10d\epsilon$ of the weights are associated to $x|_{N_{r_i}}$ consisting entirely of $G$-letters. For any such $x|_{N_{r_i}}$, $\Lambda^{\zeta_1}(\langle x(r_i) \rangle \ | \ \langle x|_{N_{r_i}} \rangle)$ is a uniform distribution over a subset of $A$ of size at least $|A|(1 - 2d\epsilon)$ by $\epsilon$-fullness of $X$. Therefore, for any such $x|_{N_{r_i}}$,
\[
d\left(\Lambda^{\zeta_1}(\langle x(r_i) \rangle \ | \ \langle x|_{N_{r_i}} \rangle), U\right) < 2d\epsilon,
\]
where we use $U$ to denote the uniform distribution over all of $A$. The analogous estimate also holds for $\Lambda^{\zeta_2}$ by exactly the same argument. Since at least $1 - 10d\epsilon$ of the measures $\Lambda^{\zeta_1}|_{r_i}$ and $\Lambda^{\zeta_2}|_{r_i}$ have been decomposed as weighted averages of distributions within $2d\epsilon$ of $U$, 
\[
d\left(\Lambda^{\zeta_1}|_{r_i}, \Lambda^{\zeta_2}|_{r_i}\right) < 12d\epsilon.
\]
Since the marginalization of $\lambda$ to $r_i$ is an optimal coupling of these two measures, this marginalization gives a probability of less than $12d\epsilon$ to the event $w_1(r_i) \neq w_2(r_i)$. Since the same is true for every evolution of $w_1$ and $w_2$, we have shown that conditioned on $w_1(r_j) \neq w_2(r_j)$ for $1 \leq j < i$ and $w_1|_{N_{r_{j'}}}, w_2|_{N_{r_{j'}}} \in G^{N_{r_{j'}}}$ for $1 \leq j' \leq i$, $\lambda(w_1(r_i) \neq w_2(r_i)) < 12d\epsilon$, verifying (\ref{unlikelydis}). 

From this, it is clear that $\lambda(\textrm{no site in } R \textrm{ is } B-\textrm{proximate}) < (12d\epsilon)^{\frac{L'}{7}}$ by decomposing it as a product of conditional probabilities. There are at most $(2d)^L$ choices for $\gamma$ and at most ${L' \choose \frac{L'}{7}} \leq 18^{\frac{L'}{7}}$ choices for the subset $R$, so the $\lambda$-probability that there is any path $\gamma$ with at least $\frac{L}{7}$ non-$B$-proximate sites is less than
\[
\sum_{L = D}^{\infty} (216 d (2d)^7 \epsilon)^{\frac{L'}{7}} \leq \sum_{L = D}^{\infty} 2^{-\frac{L}{7}} = \frac{1}{1-\sqrt[7]{0.5}} 2^{-\frac{D}{7}}
\]
since $\epsilon \leq \frac{1}{432 \cdot 2^7 d^8}$. 

\

Clearly any path $\gamma$ from $s$ to $\partial C \cup \partial C'$ contained within $\overline{C} \cap \overline{C'}$ which consists entirely of disagreements and locations where either $w_1$ or $w_2$ has a $B$-letter must be in either Case 1 or Case 2, so we have shown that the $\lambda$-probability that there exists any such path is less than $\frac{2}{1-\sqrt[7]{0.5}} 2^{-\frac{D}{7}} + \frac{1}{1-\sqrt[7]{0.5}} 2^{-\frac{D}{7}} < Z 2^{-\frac{D}{7}}$ for a constant $Z$ independent of $D$. By Fact~\ref{path}, $\lambda$-a.s. $w_1(s) \neq w_2(s)$ if and only if there exists such a $\gamma$, and so $\lambda(w_1(s) \neq w_2(s)) < Z 2^{-\frac{D}{7}}$. Clearly this implies via a simple union bound that for any shape $S$ consisting of sites at a distance at least $D$ from $\partial C$ and $\partial C'$, $\lambda(w_1|_S \neq w_2|_S) < Z |S| 2^{-\frac{D}{7}}$.

Since $\lambda$ is a coupling of $\Lambda^{\delta}$ and $\Lambda^{\delta'}$, we have shown the following:

\begin{fact}\label{unlikelydisfact}
For any $\delta \in L_{\partial C}(X)$ and $\delta' \in L_{\partial C'}(X)$ consisting only of $G$-letters, and for any shape $S \in C \cap C'$, 
\begin{equation}\label{unlikelydis2}
d\left(\Lambda^{\delta}|_S, \Lambda^{\delta'}|_S \right) < Z |S| 2^{-\frac{D}{7}},
\end{equation}
where $D := d(s, \partial C \cup \partial C')$.
\end{fact}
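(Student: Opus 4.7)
The plan is to construct a coupling $\lambda$ of $\Lambda^{\delta}$ and $\Lambda^{\delta'}$ whose probability of pointwise disagreement at any site $s \in C \cap C'$ decays geometrically in the distance $D = d(s,\partial C \cup \partial C')$, and then to lift this to arbitrary shapes by a union bound. I would build $\lambda$ sitewise on $\overline{C} \cup \overline{C'}$, starting from the prescribed boundary values $\delta$ and $\delta'$, using a carefully ordered priority rule: first process any unassigned site adjacent to an already-placed $B$-letter, then any site adjacent to an existing disagreement, then any one-sided site, and only finally a symmetric site. At one-sided sites I sample from the relevant conditional marginal, and at shared sites I use an optimal coupling of the two marginals.

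The structural heart of the argument is a ``disagreement percolation'' claim: if $w_1(s) \neq w_2(s)$ then $\lambda$-almost surely there is a path $\gamma \subset \overline{C} \cap \overline{C'}$ from $s$ to $\partial C \cup \partial C'$ each of whose sites either carries a $B$-letter in $w_1$ or $w_2$, or is a disagreement. Otherwise a closed $G$-contour of agreement would surround $s$; inside that contour the priority rule forces symmetric sampling, and the optimal couplings are supported on the diagonal, so $w_1$ and $w_2$ coincide on the enclosed region $\lambda$-a.s., contradicting disagreement at $s$.

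Next I would estimate the $\lambda$-probability of such a path of length $L \geq D$ by a case split. In Case 1, at least a fraction $\tfrac{6}{7}$ of the sites of $\gamma$ are $B$-proximate; minimality of $\gamma$ shows at most three $B$-proximate sites can share a single $B$-letter, so I can exhibit a set $T$ of roughly $L/7$ distinct $B$-letters on one side, apply Lemma~\ref{unlikelyB} (together with Remark~\ref{extrasites} to absorb the extra conditioning accumulated during the sitewise build), and combine with the combinatorial counts of $\gamma$ and $T$ to get a factor bounded by $2^{-L/7}$ provided $\epsilon \leq \epsilon_d$. In Case 2, at least $L/7$ sites $r_i \in \gamma$ are not $B$-proximate, so at the moment $r_i$ is assigned, both marginal distributions $\Lambda^{\zeta_j}|_{r_i}$ lie within $O(d\epsilon)$ of the uniform distribution on $A$, by $\epsilon$-fullness together with Lemma~\ref{unlikelyB} applied to $N_{r_i}$; hence they are within $O(d\epsilon)$ of each other, the optimal coupling disagrees at $r_i$ with conditional probability less than $12d\epsilon$, and chaining along the $r_i$ yields a factor $(12d\epsilon)^{L/7}$ that combinatorics again beats into $2^{-L/7}$.

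Summing the geometric series over $L \geq D$ gives $\lambda(w_1(s) \neq w_2(s)) < Z \cdot 2^{-D/7}$ at each single $s$, and a union bound over the sites of $S$ (each at distance at least $D$ from $\partial C \cup \partial C'$) yields the claimed total variational estimate, since a coupling's disagreement probability always upper-bounds the total variational distance of its marginals. The main obstacle will be Case 2, because the conditioning at $r_i$ is not a clean boundary configuration but an evolving partial assignment; the priority rule is designed precisely so that when $r_i$ is reached no adjacent site yet carries a $B$-letter, and the MRF property via Remark~\ref{extrasites} is what allows Lemma~\ref{unlikelyB} to still be applied to this history and keep both marginals close to uniform.
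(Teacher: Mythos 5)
Your proposal follows essentially the same route as the paper: the same sitewise coupling with the same four-tier priority rule, the same disagreement-percolation fact proved via a surrounding $G$-contour of agreement, the same $\tfrac{6}{7}$ versus $\tfrac{1}{7}$ case split using Lemma~\ref{unlikelyB} in Case 1 and the $12d\epsilon$ optimal-coupling estimate with chaining in Case 2, and the same geometric-series summation and union bound at the end. The argument is correct as outlined; your explicit appeal to Remark~\ref{extrasites} to justify applying Lemma~\ref{unlikelyB} under the evolving partial conditioning is exactly the point the paper handles implicitly.
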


We note that (\ref{unlikelydis2}) is very close to the classical condition of (weak) spatial mixing with exponential rate (see \cite{BMP} for a survey of various results and discussions involving spatial mixing of MRFs), but with the important difference that it only holds here for boundaries consisting entirely of $G$-letters. To finish the proof, we must now consider general boundaries $\eta$. For this portion of the proof, we will use only the fact that $d\left(\Lambda^{\delta}|_S, \Lambda^{\delta'}|_S \right)$ decays to $0$ as $D \rightarrow \infty$, ignoring the exponential rate. 

Roughly speaking, the strategy is to show that for any connected set $C \subseteq \mathbb{Z}^d$, any measure of maximal entropy $\mu$ on $X$ and for any finite shape $S \subset C$ far from $\partial C$, there are sets of boundary conditions $\eta \in L_{\partial C}(X)$ with $\mu$-measure approaching $1$ (as $d(S,\partial C) \rightarrow \infty$) whose members have the following property: with very high $\Lambda^{\eta}$-probability there exists a closed contour $\delta$ of $G$-letters contained in $C$, containing $S$, and which is far from $S$. Then, for any such $\eta$, most of $\Lambda^{\eta}|_S$ can be written as a weighted average over $\Lambda^{\delta}|_S$ for such $\delta$. We have already shown that $\Lambda^{\delta}|_S$ has very little dependence on $\delta$ consisting only of $G$-letters when $\delta$ is far from $S$, and so $\Lambda^{\eta}(\langle x|_S \rangle)$ has little dependence on $\eta$. We can write $\mu|_S$ as a weighted average of $\Lambda^{\eta}|_S$, and since the above shows that dependence on $\eta$ fades as $C$ becomes large for sets of $\eta$ of measure approaching $1$, $\mu|_S$ has only one possible value. Since $S$ was arbitrary, this shows that $\mu$ is the unique measure of maximal entropy on $X$. 

In the sequel, we use the notation $S \leftrightarrow T$ to denote the event that there is a path of $B$-letters connecting some site in $S$ to some site in $T$. We first need to prove the following:

\begin{fact}\label{Bpathdecayfact}
For any measure of maximal entropy $\mu$ on $X$, 
\begin{equation}\label{Bpathdecay}
\lim_{n \rightarrow \infty} \mu(0 \leftrightarrow \partial [-n,n]^d) = 0.
\end{equation}
\end{fact}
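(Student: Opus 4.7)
The plan is a Peierls-style cluster argument. For $x \in X$, let $\mathcal{C}(x)$ denote the connected component of $\{t \in \mathbb{Z}^d : x(t) \in B\}$ containing $0$ (empty if $x(0) \in G$). The events $E_n := \{0 \leftrightarrow \partial [-n,n]^d\}$ form a decreasing sequence in $n$ with $\bigcap_n E_n = \{\mathcal{C} \text{ infinite}\}$, so
\[
\mu(E_n) = \mu(\mathcal{C} \text{ finite}, \mathrm{diam}(\mathcal{C}) \geq n) + \mu(\mathcal{C} \text{ infinite}),
\]
and I need to show both terms tend to $0$ as $n \to \infty$.

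For the finite cluster term, the crucial observation is that whenever $\mathcal{C}$ is finite, its outer boundary $\partial \mathcal{C}$ consists entirely of $G$-letters by the maximality of $\mathcal{C}$ as a connected $B$-component. For any finite connected $K \ni 0$,
\[
\mu(\mathcal{C} = K) = \sum_{\delta \in G^{\partial K}} \mu([B^K] \mid [\delta])\, \mu([\delta]) \leq N^{-|K|},
\]
by Lemma~\ref{unlikelyB} applied with $S = T = K$. Using the standard lattice animal estimate that the number of connected subsets of $\mathbb{Z}^d$ of size $m$ containing $0$ is at most $c_d^m$ for some $c_d = O(d)$, and summing over $K$ of size at least $n+1$ (which includes every $K$ of diameter $\geq n$), we obtain
\[
\mu(\mathcal{C} \text{ finite}, \mathrm{diam}(\mathcal{C}) \geq n) \leq \sum_{m \geq n+1} (c_d/N)^m,
\]
which decays geometrically since the choice $\epsilon_d = O(d^{-8})$ gives $N = \Theta(\epsilon_d^{-1}) \gg c_d$.

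The main obstacle is establishing $\mu(\mathcal{C} \text{ infinite}) = 0$. My plan is to first reduce to the case of ergodic $\mu$: every measure of maximal entropy decomposes into ergodic components, each of which is itself a measure of maximal entropy (entropy is affine on convex combinations of mutually singular shift-invariant measures, so no component can have strictly smaller entropy). For ergodic $\mu$, the shift-invariant event $F := \{\exists t \in \mathbb{Z}^d : \mathcal{C}(t) \text{ infinite}\}$ has $\mu$-measure $0$ or $1$, and the former case yields $\mu(\mathcal{C} \text{ infinite}) = 0$ immediately. To rule out $\mu(F) = 1$, I would combine two ingredients: (a) Pinsker's inequality applied to the single-site marginal of $\mu$, using $h(\mu) = h(X) \geq \log|A| - \beta_d$, shows the marginal is close in total variation to uniform on $A$, giving $p := \mu(x(0) \in B) \leq \epsilon_d + O(\sqrt{\beta_d})$; and (b) if $\mu(F) = 1$, ergodicity forces a positive density $\rho > 0$ of sites lying in infinite $B$-clusters. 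A refined entropy comparison---since an infinite cluster pins down a positive-density set of sites to $B$-letters (at most $|B| \leq \epsilon_d |A|$ choices each) while the surrounding sites must be $G$---should then give $h(\mu) \leq \log|A| - \rho \log(1/\epsilon_d) + o(1)$, contradicting $h(\mu) \geq \log|A| - \beta_d$ once $\rho$ is bounded below. Making this final entropy bound quantitative enough to close under the specific choice of $\beta_d$ is the crux of the argument.
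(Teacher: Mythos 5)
Your finite-cluster estimate is sound: a finite $B$-component $K\ni 0$ has outer boundary in $G^{\partial K}$, so Lemma~\ref{unlikelyB} with $S=T=K$ together with a lattice-animal count gives geometric decay, and this is a legitimate (and rather clean) Peierls argument for that piece. The genuine gap is in ruling out an infinite cluster, and the entropy comparison you sketch for it cannot be closed. Your bound $h(\mu)\leq\log|A|-\rho\log(1/\epsilon_d)+o(1)$ measures the deficit against the full-shift value $\log|A|$, but the contradiction must be with $h(\mu)=h(X)$, and $\epsilon_d$-fullness only guarantees (via Theorem~\ref{equiv}) that $h(X)$ is within roughly $(d+1)\epsilon_d$ of $\log|A|$. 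Ergodicity gives $\rho>0$ but no lower bound on $\rho$, and once $\rho<(d+1)\epsilon_d/\log(1/\epsilon_d)$ the claimed deficit $\rho\log(1/\epsilon_d)$ --- further eroded by the positional entropy of the cluster locations, which is of order $\rho$ and not $o(1)$ --- is smaller than $\log|A|-h(X)$, so no contradiction results. Example~\ref{vertlines} makes this concrete: there $\log|A|-h(X)=\log\frac{n+1}{n}$ and $\log(1/\epsilon_d)=\log(n+1)$, so your inequality is consistent with $h(\mu)=h(X)=\log n$ for every $\rho$ below roughly $1/(n\log n)$, even though any positive density of all-$0$ columns genuinely forces $h(\mu)<h(X)$. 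The point is that the loss must be measured against $h(X)$ itself, not against $\log|A|$.

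The paper closes exactly this gap with a replacement (injection) argument in place of a per-site entropy count, and does not split into finite and infinite cases at all. If $u\in L_{[-n,n]^d}(X)$ has at least $\rho n^d$ sites joined to $\partial[-n,n]^d$ by $B$-paths, one deletes all $B$-components meeting the boundary and refills each vacated site with one of $N\geq 2$ distinct $G$-letters encoding the erased data; since every such component is anchored at one of the at most $|\partial[-n,n]^d|$ boundary sites, each resulting configuration arises from at most $2^{|\partial[-n,n]^d|}$ choices of $u$. Hence the set of such $u$ is at most a $2^{|\partial[-n,n]^d|}2^{-\rho n^d}$ \emph{fraction} of $L_{[-n,n]^d}(X)$, which is exponentially small for any fixed $\rho>0$. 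Maximal entropy then enters through the decomposition (\ref{entdecomp}): any family of boundary conditions of $\mu$-measure bounded away from $0$ must support at least a $(1-\theta)$ fraction of $\log|L_{[-n-1,n+1]^d}(X)|$, which is incompatible with the preceding count. To repair your proof you would need to replace step (b) by such an injection into $L_{[-n,n]^d}(X)$ itself; once you have it, it also subsumes your finite-cluster case.
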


\begin{proof}
For a contradiction, suppose (\ref{Bpathdecay}) is false. Then since the events $0 \leftrightarrow \partial [-n,n]^d$ are decreasing, there exists $\alpha > 0$ so that for all $n$, $\mu(0 \leftrightarrow \partial [-2n,2n]^d) > \alpha$. Then by stationarity, for each $s \in [-n,n]^d$, $\mu(s \leftrightarrow (s + \partial [-2n,2n]^d)) > \alpha$. Since $s + [-2n,2n]^d \supset [-n,n]^d$, $s \leftrightarrow (s + \partial [-2n,2n]^d)$ implies $s \leftrightarrow \partial [-n,n]^d$, and so $\mu(s \leftrightarrow \partial [-n,n]^d) > \alpha$ for all $s \in [-n,n]^d$. Then 
\begin{equation}\label{chebyshev1}
\mu\left(|\{s \in [-n,n]^d \ : \ s \leftrightarrow \partial [-n,n]^d\}| > 0.5\alpha |[-n,n]^d|\right) > 0.5\alpha.
\end{equation}
Since $\mu$ is an MRF with conditional probabilities $\{\Lambda^{\delta}\}$, we may write $\mu|_{[-n,n]^d}$ as a weighted average:
\[
\mu|_{[-n,n]^d} = \sum \rho_i \Lambda^{\delta_i},
\]
where $\delta_i$ ranges over configurations in $L_{\partial [-n,n]^d}(X)$ (and $\rho_i = \mu([\delta_i])$.) By (\ref{chebyshev1}), at least $0.25\alpha$ of the weights $\rho_i$ are associated to $\delta_i$ for which 
\begin{equation}\label{badborder}
\Lambda^{\delta_i}(|\{s \in [-n,n]^d \ : \ s \leftrightarrow \partial [-n,n]^d\}| > 0.5\alpha |[-n,n]^d|) > 0.25\alpha.
\end{equation}

In other words, if we denote the set of $\delta_i$ which satisfy (\ref{badborder}) by $P$, then $\mu([P]) > 0.25\alpha$. Make the notation $K = |\{w \in L_{[-n-1,n+1]^d}(X) \ : \ w|_{\partial [-n,n]^d} \in P\}|$. Recall that $- \sum_{w \in L_{[-n-1,n+1]^d}(X)} \mu([w]) \log \mu([w]) \geq |[-n-1,n+1]^d| h(\mu)$, which equals $|[-n-1,n+1]^d| h(X)$ since $\mu$ is a measure of maximal entropy. Then 
\begin{multline}\label{entdecomp}
|[-n-1,n+1]^d| h(X) \leq - \sum_{w \in L_{[-n-1,n+1]^d}(X)} \mu([w]) \log \mu([w]) = \\
- \sum_{\delta \in P^c} \sum_{w \in L_{[-n-1,n+1]^d}(X), w|_{\partial [-n,n]^d} = \delta} \mu([w]) \log \mu([w])\\
- \sum_{\delta \in P} \sum_{w \in L_{[-n-1,n+1]^d}(X), w|_{\partial [-n,n]^d} = \delta} \mu([w]) \log \mu([w])\\
\leq (1 - 0.25\alpha) \log |L_{[-n-1,n+1]^d}(X)| + 0.25\alpha \log K - \log (0.25 \alpha),
\end{multline}
where the last inequality uses the easily checked fact that for any positive real numbers $\beta_1, \ldots, \beta_k\}$ with sum $\beta$, $\sum (-\beta_i \log \beta_i) \leq \beta (\log k - \log \beta)$.

By definition of topological entropy, for any $\theta > 0$, there exists $N_{\theta}$ such that \newline $(1 - 0.25 \theta \alpha) \log |L_{[-n,n]^d}(X)| < |[-n,n]^d| h(X) + \log(0.25\alpha)$ for $n > N_{\theta}$. This means, in particular, that for $n \geq N_{\theta}$, 
\begin{multline*}
(1 - 0.25 \theta \alpha) \log |L_{[-n-1,n+1]^d}(X)| < |[-n-1,n+1]^d| h(X) + \log (0.25\alpha)\\
\leq (1-0.25\alpha) \log |L_{[-n-1,n+1]^d}(X)| + 0.25\alpha \log K \quad {\rm (by \ (\ref{entdecomp}))}\\
\Longrightarrow \log K > (1 - \theta) \log |L_{[-n-1,n+1]^d}(X)|.
\end{multline*}

Therefore, for $n > N_{\theta}$, $K > |L_{[-n-1,n+1]^d}(X)|^{1 - \theta}$. Since there are fewer than $|A|^{|\partial [-n,n]^d|}$ elements of $P$, there exists $\delta \in P$ and a set of at least $\frac{|L_{[-n-1,n+1]^d}(X)|^{1 - \theta}}{|A|^{|\partial [-n,n]^d|}}$ configurations $w$ for which $w\delta \in L(X)$. Then, since $\delta$ satisfies (\ref{badborder}), there is a set of configurations $S \subseteq L_{[-n-1,n+1]^d}(X)$ of size at least $\frac{0.25\alpha |L_{[-n-1,n+1]^d}(X)|^{1 - \theta}}{|A|^{|\partial [-n,n]^d|}}$, each of which contains at least $0.5\alpha |[-n,n]^d|$ sites connected to $\partial [-n,n]^d$ by paths of $B$-letters.

We now perform a very similar replacement procedure to the one used in the proof of Lemma~\ref{unlikelyB}. We will not rewrite the entire construction, rather mainly summarizing the changes from the previous procedure. Consider any $u \in S$, and take $C_i(u)$, $1 \leq i \leq k$, to be the maximal connected components of locations of $B$-letters in $u$ which have nonempty intersection with $\partial [-n,n]^d$. Since $u \in S$, $\sum |C_i(u)| > 0.5\alpha |[-n,n]^d|$. For each $i$, define $B_i(u)$ to be the subconfiguration $u|_{C_i(u)}$ of $u$ occupying $C_i(u)$. Then, remove all $B_i(u)$ from $u$, and fill the holes in various ways using the same procedure as in Lemma~\ref{unlikelyB}, where each site $s$ is filled with a $G$-letter which encodes the information about the $B$-letter $u(s)$ and whether $s$ was on the boundary of its component $C_i(u)$ or in the interior. As in Lemma~\ref{unlikelyB}, this yields a set $f(u) \subseteq L_{[-n-1,n+1]^d}(X)$ of configurations of size at least $N^{0.5\alpha |[-n,n]^d|}$, where $N = \lfloor \frac{1}{2} (\epsilon^{-1} - 4d - 4) \rfloor$. Here, we will actually only need the fact that $N > 2$, which is true since $\epsilon < \frac{1}{4d + 8}$. We have
\[
\sum_u |f(u)| > |S| 2^{0.5\alpha |[-n,n]^d|}.
\]

In Lemma~\ref{unlikelyB}, we showed that all of the sets $f(u)$ were disjoint, which is not necessarily the case here. However, it is still true that if there exist $C_i(u)$ and $C_j(u')$ which are unequal but have nonempty intersection, then $f(u) \cap f(u') = \varnothing$. It is also still true that if there exist $C_i(u)$ and $C_j(u')$ which are equal, but $B_i(u) \neq B_j(u')$, then $f(u) \cap f(u') = \varnothing$. The only new case under which $f(u)$ and $f(u')$ might not be disjoint is if all pairs $C_i(u)$ and $C_j(u')$ are either disjoint or equal, and if $B_i(u) = B_i(u')$ whenever $C_i(u) = C_j(u')$; suppose we are in this case. Fix any $v \in L_{[-n-1,n+1]^d}(X)$, and let us bound the size of $F_v := \{u \ : \ v \in f(u)\}|$ from above.

For each $s \in \partial [-n,n]^d$, either $s$ is in some $C_i(u_s)$ for some configuration $u_s \in F_v$ or not. If it is, then denote the configuration $B_i(u_s)$ by $B(s)$. By the above analysis, for every $u \in F_v$, either $B(s) = B_i(u)$ for some $i$ or all $B_i(u)$ are disjoint from $B(s)$ (in particular, this would imply that no $B_i(u)$ contains $s$). This in turn implies that for every $u \in F_v$, the set $\{B_i(u)\}$ is just a subset of $\{B(s)\}_{s \in \partial [-n,n]^d}$. Since knowing $\{B_i(u)\}$ along with $v$ uniquely determines $u$, and since there are at most $|\partial[-n,n]^d|$ sets $B(s)$, $|F_v| \leq 2^{|\partial [-n,n]^d|}$. In other words, each $v$ is in at most $2^{|\partial [-n,n]^d|}$ of the sets $f(u)$. Since this is true for any $v$, we have shown that
\begin{multline*}
|L_{[-n-1,n+1]^d}(X)| \geq \frac{\sum_u |f(u)|}{2^{|\partial [-n,n]^d|}} \geq \frac{|S| 2^{0.5\alpha |[-n-1,n+1]^d|}}{2^{|\partial [-n,n]^d|}} \\
\geq |L_{[-n-1,n+1]^d}(X)|^{1 - \theta} \frac{0.25\alpha 2^{0.5\alpha |[-n,n]^d|}}{(2|A|)^{|\partial [-n,n]^d|}} \\
\Longrightarrow |L_{[-n-1,n+1]^d}(X)| \geq \left(\frac{0.25\alpha 2^{0.5\alpha |[-n,n]^d|}}{(2|A|)^{|\partial [-n,n]^d|}}\right)^{\theta^{-1}}.
\end{multline*}
However, since $|L_{[-n-1,n+1]^d}(X)| < |A|^{|[-n-1,n+1]^d|}$, this clearly gives a contradiction for small enough $\theta$ and sufficiently large $n$ (both larger than $N_{\theta}$ and large enough so that $\frac{|\partial [-n,n]^d|}{|[-n,n]^d|}$ is much smaller than $0.5\alpha$). Therefore, our original assumption was wrong and (\ref{Bpathdecay}) is true, i.e. $\lim_{n \rightarrow \infty} \mu(0 \leftrightarrow \partial [-n,n]^d) = 0$.

\end{proof}

We are now ready to complete the proof of (A). Choose $\mu$ to be any measure of maximal entropy on $X$, and fix any $k$, $l$, and $\epsilon > 0$. By Fact~\ref{Bpathdecayfact}, we can choose $n > k+l$ large enough that $\mu(0 \leftrightarrow \partial [-n+k+l,n-k-l]^d) < \frac{\epsilon}{|[-k-l,k+l]^d|}$. Then by stationarity of $\mu$, $\mu(t \leftrightarrow t + \partial [-n+k+l,n-k-l]^d) < \frac{\epsilon}{|[-k-l,k+l]^d|}$ for all $t \in [-k-l,k+l]^d$. Since $[-n,n]^d \supseteq t + [-n+k+l,n-k-l]^d$ for all such $t$, the event $t \leftrightarrow \partial [-n,n]^d$ is contained in the event $t \leftrightarrow t + \partial [-n+k+l,n-k-l]^d$ for all such $t$, and so $\mu(t \leftrightarrow \partial [-n,n]^d) < \frac{\epsilon}{|[-k-l,k+l]^d|}$. Summing over all $t \in [-k-l,k+l]^d$ yields $\mu([-k-l,k+l]^d \leftrightarrow \partial [-n,n]^d) < \epsilon$.
 
This implies that for any $n$, there is a set $U_n \subseteq L_{[-n,n]^d}(X)$ with $\mu([U_n]) > 1 - \epsilon$ such that any $w \in U_n$ contains a closed contour consisting entirely of $G$-letters containing $[-k-l,k+l]^d$ in its interior. For any $w \in U_n$, if $\{\gamma_i\} = \{\partial S_i\}$ is the collection of all such contours, then clearly $\gamma(w) := \partial (\bigcup S_i)$ is the unique maximal such contour, i.e. any other such closed contour $\gamma'$ for $w$ is contained in the union of $\gamma(w)$ and its interior. Define $B(w)$ to consist of the set of all sites of $[-n,n]^d$ on or outside $\gamma(w)$, and $D(w) = w|_{B(w)}$. We note that $[U_n]$ can be written as a disjoint union of the sets $[L_{[-n,n]^d}(X) \cap \langle D(w) \rangle]$ over all possible choices for $D(w)$. (For clarity, we note that $L_{[-n,n]^d}(X) \cap \langle D(w) \rangle$ consists of all configurations $x$ in $L_{[-n,n]^d}(X)$ for which $x|_{B(w)} = D(w)$.) This means that $\mu$, restricted to $U_n$ and then marginalized to $[-k,k]^d$, can be written as a weighted average of the conditional measures $\mu\left([ x|_{[-k,k]^d} ] \ | \ [D(w)]\right)$ over possible values of $D(w)$, and since $\mu$ is an MRF, this is actually a weighted average of $\Lambda^{\gamma(w)}|_{[-k,k]^d}$.


However, each $\gamma(w)$ is a closed contour of $G$-letters with distance greater than $l$ from $[-k,k]^d$, and so by Fact~\ref{unlikelydisfact}, for any $\gamma(w)$ and any $\eta \in L_{\partial [-n,n]^d}(X)$ consisting only of $G$-letters,
\begin{equation}\label{closegammas}
d\left(\Lambda^{\gamma(w)}|_{[-k,k]^d}, {\eta}|_{[-k,k]^d}\right) < Z |[-k,k]^d| 2^{-\frac{l}{7}}.
\end{equation}
Since the set $U_n$ has $\mu$-measure at least $1 - \epsilon$, and since $\mu|_{[-k,k]^d}$ restricted to $U_n$ can be decomposed as a weighted average of measures $\Lambda^{\gamma(w)}|_{[-k,k]^d}$, (\ref{closegammas}) implies that
\begin{equation}\label{closegammas2}
d\left(\mu|_{[-k,k]^d}, \Lambda^{\eta}|_{[-k,k]^d}\right) < Z |[-k,k]^d| 2^{-\frac{l}{7}} + \epsilon.
\end{equation}
By taking $l \rightarrow \infty$ and $\epsilon \rightarrow 0$ (thus forcing $n \rightarrow \infty$, since $n$ was chosen larger than $k+l$), we see that $\mu|_{[-k,k]^d}$ is in fact uniquely determined by the conditional probabilities $\Lambda^{\delta}$. Since $k$ was arbitrary, $\mu$ is the unique shift-invariant MRF with conditional probabilities $\Lambda^{\delta}$, implying by Proposition~\ref{P1} that $\mu$ is the unique measure of maximal entropy on $X$, proving (A).

\end{proof}

We now state two corollaries of the proof of (A), which will be useful later for the proofs of (B) - (D).

\begin{corollary}\label{glimit}
If $X$ is an $\epsilon_d$-full nearest neighbor $\mathbb{Z}^d$ SFT with unique measure of maximal entropy $\mu$, then $\mu$ is the (unique) weak limit (as $n \rightarrow \infty$) of $\Lambda^{\eta_n}$ for any sequence $\eta_n \in L_{\partial [-n,n]^d}(X)$ of boundary configurations consisting only of $G$-letters.
\end{corollary}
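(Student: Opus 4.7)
The corollary is essentially a repackaging of what is already proved in the course of establishing (A), so the plan is to extract the right estimate rather than develop new machinery. Recall that equation~(\ref{closegammas2}) in the proof of (A) was derived as follows: for any $\eta \in L_{\partial [-n,n]^d}(X)$ consisting only of $G$-letters, and for any $k, l$ with $n > k + l$ chosen so that Fact~\ref{Bpathdecayfact} guarantees $\mu\bigl([-k-l,k+l]^d \leftrightarrow \partial [-n,n]^d\bigr) < \epsilon$, one obtains
\[
d\bigl(\mu|_{[-k,k]^d}, \Lambda^{\eta}|_{[-k,k]^d}\bigr) < Z |[-k,k]^d| \, 2^{-l/7} + \epsilon.
\]
Crucially, this estimate is uniform over the choice of $G$-letter boundary $\eta$ on $\partial [-n,n]^d$; the hypothesis is only that $\eta \in G^{\partial [-n,n]^d}$.

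With this in hand, the plan is as follows. Fix a finite shape $S \subseteq \mathbb{Z}^d$; it suffices (by containment) to treat $S = [-k,k]^d$ for arbitrary $k$. Given $\epsilon > 0$, first choose $l$ large enough that $Z|[-k,k]^d|\, 2^{-l/7} < \epsilon/2$. Then, by Fact~\ref{Bpathdecayfact} and the stationarity argument from the proof of (A), choose $N \geq k+l$ so that for every $n \geq N$ one has $\mu\bigl([-k-l,k+l]^d \leftrightarrow \partial [-n,n]^d\bigr) < \epsilon/2$. Feeding the boundary configurations $\eta_n$ into the inequality above (which is legal precisely because each $\eta_n$ consists only of $G$-letters, matching the hypothesis under which~(\ref{closegammas2}) was derived), we get
\[
d\bigl(\mu|_{[-k,k]^d}, \Lambda^{\eta_n}|_{[-k,k]^d}\bigr) < \epsilon
\]
for all $n \geq N$.

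Because this holds for every finite shape $[-k,k]^d$ and every $\epsilon > 0$, the finite-dimensional marginals of $\Lambda^{\eta_n}$ converge to those of $\mu$. Convergence of all finite-dimensional marginals on the full shift $A^{\mathbb{Z}^d}$ is equivalent to weak convergence of measures, so $\Lambda^{\eta_n} \to \mu$ weakly, and the limit is obviously unique since it equals the specified $\mu$.

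There is essentially no hard step: the whole content was already extracted in the proof of (A), where the boundary $\eta$ was allowed to be any $G$-letter configuration on $\partial [-n,n]^d$. The only thing one might want to double-check, and which I would verify explicitly in the write-up, is that~(\ref{closegammas2}) was indeed proved uniformly in $\eta$ (it was, because the only input about $\eta$ used there was Fact~\ref{unlikelydisfact}, which applies to any $G$-boundary).
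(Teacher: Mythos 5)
Your proof is correct and is essentially identical to the paper's own (the paper's version is a two-line appeal to inequality~(\ref{closegammas2}) followed by letting $l \rightarrow \infty$ and $\epsilon \rightarrow 0$, exactly as you do). Your explicit check that~(\ref{closegammas2}) is uniform over $G$-letter boundaries $\eta$ is the right point to verify, and it holds for the reason you give.
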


\begin{proof}
Choose any such sequence $\eta_n$. For any $k$, (\ref{closegammas2}) implies that as $n \rightarrow \infty$, $\Lambda^{\eta_n}|_{[-k,k]^d}$ approaches $\mu|_{[-k,k]^d}$ weakly. Therefore, $\Lambda^{\eta_n} \rightarrow \mu$.
\end{proof}

\begin{corollary}\label{gwords}
If $X$ is an $\epsilon_d$-full nearest neighbor $\mathbb{Z}^d$ SFT with unique measure of maximal entropy $\mu$, then any configuration $u \in L(X)$ containing only $G$-letters on its inner boundary has positive $\mu$-measure.
\end{corollary}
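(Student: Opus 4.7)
The plan is to use Corollary~\ref{glimit} to reduce the statement to a uniform positive lower bound on $\Lambda^{\eta_n}([u])$, and then establish this bound by a replacement-style counting argument patterned on the proof of Lemma~\ref{unlikelyB}.

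First, I would invoke Corollary~\ref{glimit}: $\mu$ is the weak limit of $\Lambda^{\eta_n}$ for any sequence $\eta_n \in L_{\partial[-n,n]^d}(X)$ of $G$-letter boundary configurations, and since $[u]$ is clopen as a cylinder set, $\mu([u]) = \lim_n \Lambda^{\eta_n}([u])$. It therefore suffices to exhibit such a sequence for which $\Lambda^{\eta_n}([u]) \geq c > 0$ uniformly in $n$. Writing $\Lambda^{\eta_n}([u]) = T_n(u)/T_n$, where $T_n(u')$ counts locally admissible $w$ on $[-n,n]^d$ with $w|_S = u'$ and $w\eta_n$ locally admissible, I would lower bound the numerator via Lemma~\ref{manyfillings}: the inner boundary of $S \cup \partial[-n,n]^d$ is all $G$-letter (by the hypothesis $u|_{\underline{\partial}S} \in G^{\underline{\partial}S}$ and the choice of $\eta_n$), so iteratively filling $[-n,n]^d \setminus S$ with $G$-letters gives $T_n(u) \geq (|A|(1-(2d+1)\epsilon))^{|[-n,n]^d \setminus S|}$.

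To bound $T_n/T_n(u)$ from above, I would use a replacement argument: given any locally admissible extension $w$ of $\eta_n$ with $w|_S = u'$, I replace $w|_S$ by $u$ and re-fill the buffer $\partial S$ with $G$-letters compatible with $u$ on the inside and with $w$ outside the buffer, again using Lemma~\ref{manyfillings}. This produces a family of modifications $w'$ with $w'|_S = u$ and $w'\eta_n$ locally admissible, and the resulting multi-valued map $w \mapsto \{w'\}$ is at most $K$-to-one for a constant $K$ depending on $|S|$, $|\partial S|$, $d$, and $\epsilon$ but not on $n$ (the forgotten data are $w|_S$ and $w|_{\partial S}$, giving a trivial multiplicity bound of $|A|^{|S|+|\partial S|}$, balanced against the $(|A|(1-(2d+1)\epsilon))^{|\partial S|}$ choices of $G$-letter refill). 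This yields $T_n \leq K\, T_n(u)$, hence $\Lambda^{\eta_n}([u]) \geq 1/K$ uniformly, and passage to the limit gives $\mu([u]) \geq 1/K > 0$.

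The main obstacle will be the bookkeeping in the replacement step: executing the $G$-letter filling of $\partial S$ via Lemma~\ref{manyfillings} requires the values of $w$ on the next layer outside the buffer (the sites in $\partial(S \cup \partial S) \setminus (S \cup \partial S)$) to be $G$-letters, so that $\epsilon$-fullness provides enough compatible choices at each buffer site. Control of this event comes from Lemma~\ref{unlikelyB} applied to $\Lambda^{\eta_n}$, which inherits the MRF property with uniform conditionals by Proposition~\ref{P1}: the probability that a given site in this exterior layer receives a $B$-letter is at most $N^{-1}$, and combined with Remark~\ref{extrasites}, a union bound (or, for larger shapes $S$, an iterated conditioning that exploits the exponential decay in Fact~\ref{unlikelydisfact}) shows the fraction of replaceable extensions is bounded below by a positive constant depending on $u$, $d$, $\epsilon$ but not on $n$. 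Folding this into the counting above completes the uniform lower bound and hence the proof.
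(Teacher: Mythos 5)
Your overall route---pass to the weak limit via Corollary~\ref{glimit} and prove a uniform-in-$n$ lower bound $\Lambda^{\eta_n}([u]) \geq c > 0$ by a replacement argument---is viable, and the multiplicity bookkeeping (at most $|A|^{|S|+|\partial S|}$ preimages against at least $(|A|(1-(2d+1)\epsilon))^{|\partial S|}$ images, both independent of $n$) is fine. The genuine soft spot is the step where you restrict to extensions $w$ whose second layer $V := \partial(S\cup\partial S)$ consists only of $G$-letters and claim this event has $\Lambda^{\eta_n}$-probability bounded below independently of $n$. The union bound you propose gives $\Lambda^{\eta_n}(\exists\, t\in V \textrm{ with } w(t)\in B) < |V|\,N^{-1}$, which is vacuous as soon as $|V|\geq N$; since $N$ is a constant depending only on $d$ while $u$ (hence $S$, hence $V$) is arbitrary, this fails for all but small configurations. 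Your fallback, ``iterated conditioning that exploits the exponential decay in Fact~\ref{unlikelydisfact},'' names the wrong tool: that fact compares conditional distributions over two all-$G$ boundaries and yields no lower bound on the probability of an all-$G$ event. What does work is iterated conditioning through Lemma~\ref{unlikelyB} itself: order the sites of $V$; for each $t$ and each all-$G$ assignment $g$ to the earlier sites $V_{<t}$, the outer boundary of $[-n,n]^d\setminus V_{<t}$ is contained in $V_{<t}\cup\partial[-n,n]^d$ and carries only $G$-letters, so Lemma~\ref{unlikelyB} with $|T|=1$ (together with the MRF property, as in Remark~\ref{extrasites}) gives $\Lambda^{\eta_n}\bigl(w(t)\in B \mid w|_{V_{<t}}=g\bigr) < N^{-1}$, whence $\Lambda^{\eta_n}\bigl(w|_V\in G^V\bigr) > (1-N^{-1})^{|V|} > 0$. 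With that substitution your argument closes, but as written the bound you actually need is not established.

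For contrast, the paper's proof is much shorter and avoids counting entirely: the proof of (A) already produces, for suitable $k,l,n$, a closed contour $\delta=\gamma(w)$ of $G$-letters surrounding $S$ with $\mu([\delta])>0$; since the concatenation $u\delta$ has all-$G$ inner boundary it is globally admissible by Corollary~\ref{LA=GA}, so $\Lambda^{\delta}(\langle u\rangle)>0$, and then $\mu([u]) \geq \mu([\delta])\,\Lambda^{\delta}(\langle u\rangle) > 0$. The key simplification is that one does not need a $G$-contour at a prescribed location (which is what forces you into the all-$G$-layer estimate); one only needs \emph{some} surrounding $G$-contour of positive $\mu$-measure, which the proof of (A) supplies for free.
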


\begin{proof}
Consider any such configuration $u \in L_S(X)$. It was shown in the proof of (A) that there exists $T \supset S$ and a closed contour $\delta \in L_{\partial T}$ of $G$-letters containing $S$ for which $\mu([\delta])>0$. (Specifically, take $k,l,n$ large enough that $S \subseteq [-k-l,k+l]^d$ and $\mu([U_n]) > 0$, choose $w \in U_n$ with $\mu([w]) > 0$, and then take $\delta = \gamma(w)$.) Since the concatenation $u \delta$ has inner boundary consisting only of $G$-letters, by Corollary~\ref{LA=GA} it is globally admissible. Therefore, there exists a configuration $v \in L(X)$ with $v|_S = u$ and $v|_{\partial T} = \delta$, implying that $\Lambda^{\delta}(\langle u \rangle) > 0$. Then $\mu([u]) \geq \mu([u \delta]) = \mu([\delta]) \mu([u] \ | \ [\delta]) = \mu([\delta]) \Lambda^{\delta}(\langle u \rangle) > 0$.
\end{proof}

\begin{remark}\label{notimprovement}
At first glance, (A) may appear to be an extension of the main result from \cite{vdBM}, which guaranteed uniqueness of MRFs corresponding to certain classes of conditional probabilities. However, this is not the case; even for $\epsilon$ arbitrarily close to $0$, $\epsilon$-full SFTs may still support multiple MRFs with the same uniform conditional probabilities $\Lambda^{\delta}$, some corresponding to limits of boundary conditions involving $B$-letters. (For instance, in Example~\ref{onepoint}, both the point mass at $0^{\mathbb{Z}^d}$ and the Bernoulli measure of maximal entropy on $\{1,\ldots,n\}^{\mathbb{Z}^d}$ are MRFs with conditional probabilities $\Lambda^{\delta}$.) We very much need the extra condition of maximal entropy to rule out all but one of these MRFs as ``degenerate.''
\end{remark}

\

\begin{proof}[Proof of (B)] Our strategy is to first show that we can compute $h(X)$ by taking the exponential growth rate of globally admissible configurations whose boundaries contain only $G$-letters, and that we can bound the rate at which these approximations approach $h(X)$. Then, we can easily write an algorithm which counts such configurations, since by Corollary~\ref{LA=GA}, a configuration with boundary containing only $G$-letters is globally admissible iff it is locally admissible.

Fix any $n$, and denote by $\Gamma$ the set $\underline{\partial} [1,n]^d$. For any $\delta \in L_{\Gamma}(X)$, we will show that $|L_{[1,n]^d}(X) \cap \langle\delta\rangle| \leq |L_{[1,n]^d}(X) \cap \langle G^{\Gamma} \rangle|$, i.e. the number of globally admissible configurations with shape $[1,n]^d$ whose restriction to $\Gamma$ equals $\delta$ is less than or equal to the number of globally admissible configurations with shape $[1,n]^d$ whose restriction to $\Gamma$ consists entirely of $G$-letters. The proof involves another replacement procedure similar to the one from the proof of Lemma~\ref{unlikelyB}; the difference is that for any $u \in L_{[1,n]^d}(X) \cap \langle \delta\rangle$, we will define only a single configuration $f(u)$ in $L_{[1,n]^d}(X)$, rather than a set. Take $C_i(u)$, $1 \leq i \leq k(u)$, to be the set of maximal connected components of locations of $B$-letters in $u$ which have nonempty intersection with $\Gamma$, and for each $i$ define $B_i(u) = u|_{C_i(u)}$ to be the subconfiguration of $u$ occupying $C_i(u)$. Just as in the proof of Lemma~\ref{unlikelyB}, remove all $B_i(u)$ from $u$, and fill the holes with configurations of $G$-letters, where the letter chosen to fill a site $s$ encodes the letter $u(s)$ and the information of whether $s$ was on the boundary of its component $C_i(u)$ or in the interior. For exactly the same reasons as in Lemma~\ref{unlikelyB}, $u \neq u' \Rightarrow f(u) \neq f(u')$. We also note that all $f(u)$ are in $L_{[1,n]^d}(X) \cap \langle G^{\Gamma} \rangle$, meaning that their restrictions to $\Gamma$ consist entirely of $G$-letters.

We have then shown that $|L_{[1,n]^d}(X) \cap \langle \delta\rangle | \leq |L_{[1,n]^d}(X) \cap \langle G^{\Gamma}\rangle |$. By summing over all choices for $\delta$, we see that $|L_{[1,n]^d}(X)| \leq |A|^{|\Gamma|} |L_{[1,n]^d}(X) \cap \langle G^{\Gamma}\rangle |$. This means that 
\begin{multline}\label{upperbound}
h(X) \leq \frac{1}{n^d} \log |L_{[1,n]^d}(X)| \leq \frac{1}{n^d} \left(\log |L_{[1,n]^d}(X) \cap \langle G^{\Gamma}\rangle | + |\Gamma| \log |A|\right) \\ \leq \frac{1}{n^d} \log |L_{[1,n]^d}(X) \cap \langle G^{\Gamma}\rangle | + \frac{2d \log |A|}{n}.
\end{multline}

We now make a simple observation: for any $k$ and any configurations $w_t \in L_{[1,n]^d}(X) \cap \langle G^{\Gamma}\rangle $, $t \in [1,k]^d$, define the concatenation $u$ of all $w_t$, which has shape $\bigcup_{t} \prod [1+(t_i-1)(n+1),t_i(n+1) - 1]$. 
Then $u$ is made up of a union of locally admissible configurations where each pair is separated by a distance of at least $1$, so it is locally admissible. Then by Corollary~\ref{LA=GA}, since the outer boundary of $u$ consists only of $G$-letters and $\epsilon < \frac{1}{2d+2}$, $u$ is also globally admissible, meaning in particular that it is a subconfiguration of a configuration in $L_{[1,k(n+1)]^d}(X)$. This implies that for any $k > 0$,
\[
|L_{[1,k(n+1)]^d}(X)| \geq |L_{[1,n]^d}(X) \cap \langle G^{\Gamma}\rangle |^{k^d}.
\]
By taking logs of each side, dividing by $(k(n+1))^d$, and letting $k \rightarrow \infty$, we see that
\begin{equation}\label{lowerbound}
h(X) \geq \frac{1}{(n+1)^d} \left(\log |L_{[1,n]^d}(X) \cap \langle G^{\Gamma}\rangle |\right).
\end{equation}
The upper and lower bounds on $h(X)$ given by (\ref{upperbound}) and (\ref{lowerbound}) differ by
\begin{multline}
\frac{2d \log |A|}{n} + \left(\frac{1}{n^d} - \frac{1}{(n+1)^d}\right) \log \left(|L_{[1,n]^d}(X) \cap \langle G^{\Gamma}\rangle |\right) \leq \frac{2d \log |A|}{n} \\
+ \frac{(n+1)^d - n^d}{n^d (n+1)^d} n^d \log |A| \leq \frac{2d \log |A|}{n} + \frac{d(n+1)^{d-1}}{(n+1)^d} \log |A| \leq \frac{3d \log |A|}{n}.
\end{multline}
Since $\frac{1}{n^d} \log |L_{[1,n]^d}(X) \cap \langle G^{\Gamma}\rangle |$ is between the bounds from (\ref{upperbound}) and (\ref{lowerbound}), it is within $\frac{3d \log |A|}{n}$ of $h(X)$. By Corollary~\ref{LA=GA}, $\log |L_{[1,n]^d}(X) \cap \langle G^{\Gamma}\rangle | = \newline \log |{[1,n]^d}(X) \cap \langle G^{\Gamma}\rangle |$. Finally, we note that $\log \left(|LA_{[1,n]^d}(X) \cap \langle G^{\Gamma}\rangle |\right)$ can be computed algorithmically, in $|A|^{n^d(1 + o(1))}$ steps, by simply writing down all possible configurations with alphabet $A$ and shape $[1,n]^d$ and counting those which are locally admissible and have restriction to $\Gamma$ consisting only of $G$-letters. 

Since we may invest $|A|^{n^d(1 + o(1))}$ steps to get an approximation to $h(X)$ with tolerance $\frac{3d \log |A|}{n}$, clearly $h(X)$ is computable in time $e^{O(n^d)}$, verifying (B).

\end{proof}

\begin{proof}[Proof of (C)] For any $n$, we define $X_n$ to be the $\mathbb{Z}^d$ SFT consisting of all points of $X$ in which all connected components of $B$-letters have size less than $n$. We will show that each $X_n$ has the UFP and that $h(X_n) \rightarrow h(X)$ as $n \rightarrow \infty$. 

We first verify that $X_n$ has the UFP with distance $2n$. Consider any $k,l$ with $l > k+3n$ and any configurations $w \in L_{[-k,k]^d}(X_n)$ and $w' \in L_{[-l,l]^d \setminus [-k-2n,k+2n]^d}(X_n)$. We will exhibit $x \in X_n$ with $x|_{[-k,k]^d} = w$ and $x|_{[-l,l]^d \setminus [-k-2n,k+2n]^d} = w'$, proving the UFP once one takes weak limits with $l \rightarrow \infty$.

We first use the fact that $w, w'$ are globally admissible in $X_n$ to extend them to configurations $v \in L_{[-k-n+1,k+n-1]^d}$ and $v' \in L_{[-l,l]^d \setminus [-k-n-1,k+n+1]^d}$ respectively. Then, in both $v$ and $v'$, remove any connected components of $B$-letters which have empty intersection with $w$ or $w'$. Fill these with $G$-letters in some locally admissible way by Lemma~\ref{manyfillings}, creating configurations $u$ and $u'$ respectively. Since connected components of $B$-letters in $X_n$ must have size less than $n$, $u|_{\underline{\partial} [-k-n+1,k+n-1]^d}$ and $u'|_{\underline{\partial} [-k-n-1,k+n+1]^d}$ consist only of $G$-letters. (If this were not the case, then either $u$ contained a connected component of $B$-letters intersecting both $[-k,k]^d$ and $\underline{\partial} [-k-n+1,k+n-1]^d$ or $u'$ contained a connected component of $B$-letters intersecting both $[-l,l]^d \setminus [-k-2n,k+2n]^d$ and $\underline{\partial} [-k-n-1, k+n+1]^d$, and in either case such a component would have had size at least $n$, which is impossible since $v,v' \in L(X_n)$.) Then again by Corollary~\ref{manyfillings}, the empty region $\underline{\partial} [-k-n,k+n]^d$ between $u$ and $u'$ can be filled with $G$-letters in a locally admissible way, creating a new locally admissible configuration $v''$ with shape $[-l,l]^d$. Finally, we note that since $w'$ was globally admissible, there exists $x' \in X$ with $x'|_{[-l,l]^d \setminus [-k-2n,k+2n]^d} = w'$. Finally, we note that $w'$ has ``thickness'' at least $n$, and that no letters on $w'$ were changed in the construction of $v''$. Therefore, since $X_n$ is an SFT defined by forbidden configurations of size at most $n$, the point $x \in A^{\mathbb{Z}^d}$ defined by $x|_{[-l,l]^d} = v''$ and $x|_{\mathbb{Z}^d \setminus [-l,l]^d} = x'|_{\mathbb{Z}^d \setminus [-l,l]^d}$ is in $X_n$, and we have proved that $X_n$ has the UFP with distance $2n$. (See Figure~\ref{UFPproof} for an illustration of the creation of $x$.)

\begin{figure}[h]
\centering
\includegraphics[scale=0.3]{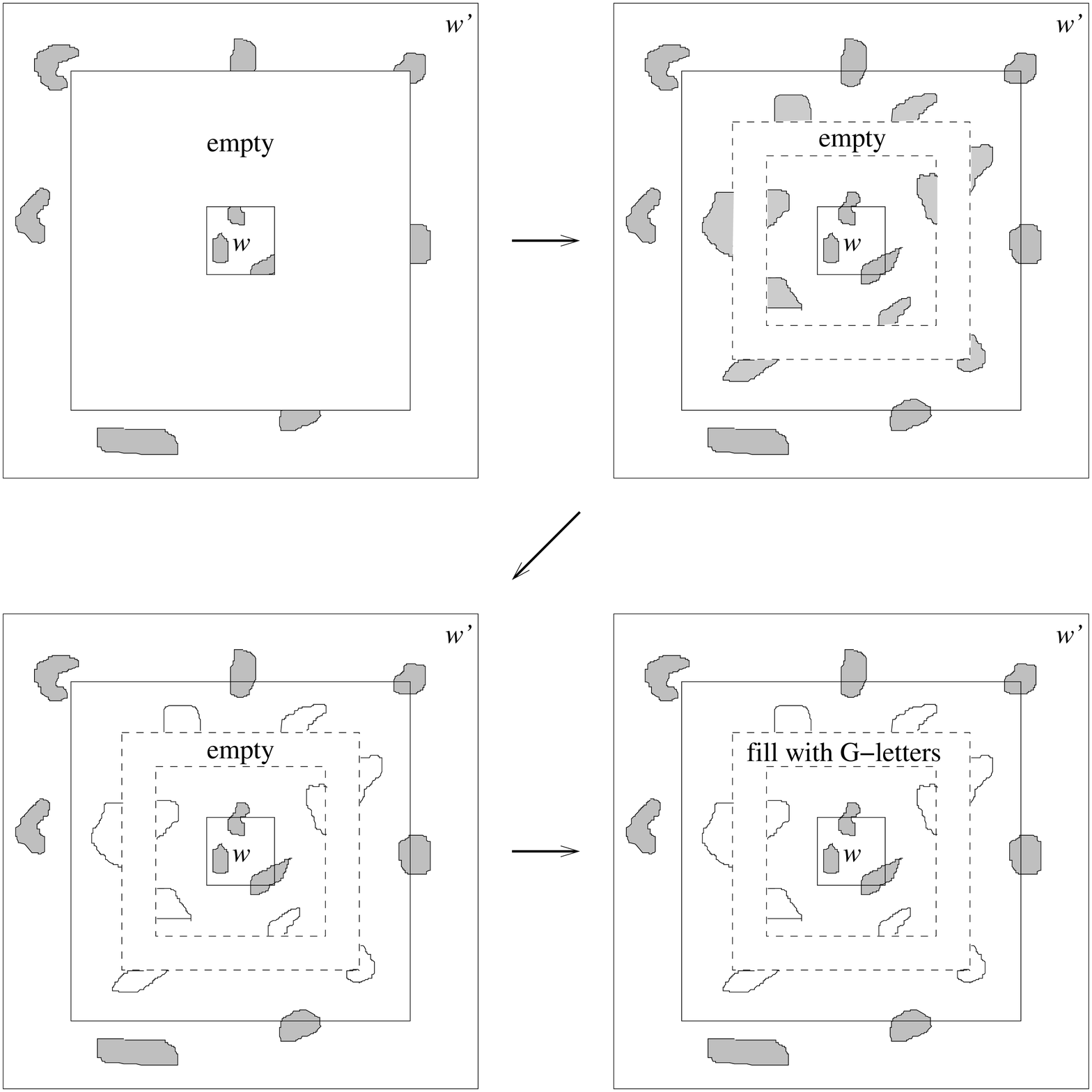}
\caption{Filling between $w$ and $w'$ (shaded areas represent $B$-letters, white areas represent $G$-letters)}
\label{UFPproof}
\end{figure}

We finish by verifying that $h(X_n) \rightarrow h(X)$. We showed in the proof of (B) that for any collection $w_t \in L_{[1,n]^d}(X) \cap \langle G^{\underline{\partial} [1,n]^d}\rangle$, $t \in [1,k]^d$, the concatenation $u$ of all $w_t$, which has shape $\bigcup_{t} \prod [1+(t_i-1)(n+1),t_i(n+1) - 1]$, is in $L(X)$.

To prove this fact, we invoked Corollary~\ref{LA=GA}, which in fact says a bit more; it implies that $u$ can be extended to a point $x \in X$ by appending only $G$-letters to $u$. Note that since each $w_t$ contains only $n^d$ sites, and since all letters of $x$ outside $u$ are in $G$, $x$ does not contain any connected components of $B$-letters with size more than $n^d$. Therefore, $x \in X_{n^d}$, which implies that $u \in L(X_{n^d})$. By counting the possible choices for the collection $(w_t)$, we see that
\[
|L_{[1,k(n+1)]^d}(X_{n^d})| \geq |L_{[1,n]^d}(X) \cap \langle G^{\underline{\partial} [1,n]^d}\rangle |^{k^d}.
\]
By taking logs of both sides, dividing by $(k(n+1))^d$, and letting $k \rightarrow \infty$, we see that
\[
h(X_{n^d}) \geq \frac{1}{(n+1)^d} \log |L_{[1,n]^d}(X) \cap \langle G^{\underline{\partial} [1,n]^d}\rangle |.
\]
We now recall that in the proof of (B), we showed that $\frac{1}{(n+1)^d} \log |L_{[1,n]^d}(X) \cap \langle G^{\underline{\partial} [1,n]^d}\rangle |$ is within $\frac{3d\log|A|}{n}$ of $h(X)$, and so we have shown that $h(X_{n^d}) \geq h(X) - \frac{3d\log|A|}{n}$ for all $n$, implying that $h(X_n) \rightarrow h(X)$ as $n \rightarrow \infty$.

For any $\mathbb{Z}^d$ aperiodic ergodic measure-theoretic dynamical system $(Y,\mu,S_t)$ with $h(\mu) < h(X)$, there then exists $n$ for which $h(\mu) < h(X_n)$. We recall from Section~\ref{defns} that any $\mathbb{Z}^d$ SFT with the UFP is a measure-theoretic universal model, and so there exists a measure $\nu$ on $X_n$ so that $(X_n, \nu, \sigma_t) \cong (Y, \mu, S_t)$. Since the support of $\nu$ is contained in $X_n$, clearly it is contained in $X$ as well, and we have verified (C).

\end{proof}

\begin{proof}[Proof of (D)] We prove that $\mu$ is isomorphic to a Bernoulli measure by using the property of quite weak Bernoulli as defined in \cite{BS2}.

\begin{definition}
A measure $\mu$ on $A^{\mathbb{Z}^d}$ is called \textbf{quite weak Bernoulli} if for all $\epsilon > 0$, 
\[
\lim_{n \rightarrow \infty} d\left(\mu|_{[-n(1-\epsilon),n(1-\epsilon)]^d \cup \mathbb{Z}^d \setminus [-n,n]^d}, \mu|_{[-n(1-\epsilon),n(1-\epsilon)]^d} \times \mu|_{\mathbb{Z}^d \setminus [-n,n]^d}\right) = 0.
\]
\end{definition}
It is known that quite weak Bernoulli measures are isomorphic to Bernoulli measures (for instance, in \cite{BS2}, they note that it implies the property of very weak Bernoulli as defined in \cite{kamm}, and that Theorem 1.1 of \cite{kamm} shows that all very weak Bernoulli measures are isomorphic to Bernoulli measures), and so it suffices to show that $\mu$ is quite weak Bernoulli. 

It is shown in \cite{BS2} that $\mu$ is quite weak Bernoulli if and only if for all $\epsilon > 0$,
\begin{multline}
\lim_{n \rightarrow \infty} \min\Big\{\alpha \ : \ \mu\big(\{\eta \in L_{\mathbb{Z}^d \setminus [-n,n]^d}(X) \ : \ \\ d\left(\mu|_{[-n(1-\epsilon),n(1-\epsilon)]^d}, \mu^{\eta}|_{[-n(1-\epsilon),n(1-\epsilon)]^d}\right) < \alpha\} \big) > 1 - \alpha \Big\} = 0,
\end{multline}
where $\mu^{\eta}$ is the conditional distribution on $[-n,n]^d$ of $\mu$ given $\eta$. Since $\mu$ is an MRF with uniform conditional probabilities $\Lambda^{\delta}$, we can replace $\mu^{\eta}$ by $\Lambda^{\delta}$, where $\delta := \eta|_{\partial [-n,n]^d}$. Therefore, it suffices to show that for all $\epsilon > 0$,
\begin{multline}\label{QWB}
\lim_{n \rightarrow \infty} \min\Big\{\alpha \ : \ \mu\big(\{\delta \in L_{\partial [-n,n]^d}(X) \ : \ \\ d\left(\mu|_{[-n(1-\epsilon),n(1-\epsilon)]^d}, \Lambda^{\delta}|_{[-n(1-\epsilon),n(1-\epsilon)]^d}\right) < \alpha\} \big) > 1 - \alpha \Big\} = 0.
\end{multline}

We first note that combining Lemma~\ref{unlikelyB} with Corollary~\ref{glimit} yields the fact that for any finite $T \subseteq \mathbb{Z}^d$, $\mu([B^T]) \leq N^{-|T|}$, where $N = \lfloor \frac{1}{2} (\epsilon^{-1} - 4d - 4) \rfloor > 4d$ since $\epsilon < \frac{1}{12d+4}$. By summing over all possible paths of $B$-letters from $\partial [-n(1-\epsilon),n(1-\epsilon)]^d$ to $\underline{\partial} [-n,n]^d$, this implies that
\[
\mu\left([-n(1-\epsilon),n(1-\epsilon)]^d \leftrightarrow \underline{\partial} [-n,n]^d\right) \leq \sum_{L = n\epsilon}^{\infty} (2d)^L N^{-L} = \frac{2d}{N-2d} \left(\frac{2d}{N}\right)^{n\epsilon}.
\]
Since $N>4d$, $\mu\left([-n(1-\epsilon),n(1-\epsilon)]^d \leftrightarrow \underline{\partial} [-n,n]^d\right) < 2^{-\epsilon n}$. Therefore, with $\mu$-probability at least $1 - 2^{-\epsilon n}$ there exists a closed contour of $G$-letters containing $[-n(1-\epsilon),n(1-\epsilon)]^d$ in its interior and contained within $[-n,n]^d$. Since $\mu$ is an MRF with uniform conditional probabilities $\Lambda^{\delta}$, $\mu|_{[-n,n]^d} = \sum_{\delta} \mu([\delta]) \Lambda^{\delta}$, where the sum is over $\delta \in L_{\partial [-n,n]^d}(X)$.  Clearly, there then exists a set 
$\Delta \subseteq L_{\partial [-n,n]^d}(X)$ with $\mu([\Delta]) > 1 - 2^{-0.5\epsilon n}$ so that for any $\delta \in \Delta$, the $\Lambda^{\delta}$-probability that there exists a closed contour of $G$-letters containing $[-n(1-\epsilon),n(1-\epsilon)]^d$ and contained in $[-n,n]^d$ is at least $1 - 2^{-0.5\epsilon n}$.

As in the proof of (A), for any $\delta \in \Delta$ and any $u \in L_{[-n,n]^d}(X)$ such that $u \delta \in L(X)$, we define $\gamma(u)$ to be the unique maximal closed contour of $G$-letters contained within $[-n,n]^d$, which contains $[-n(1-\epsilon),n(1-\epsilon)]^d$ with $\Lambda^{\delta}$ probability at least $1 - 2^{-0.5\epsilon n}$ by definition of $\Delta$. Also define $B(u)$ to be the set of sites of $[-n,n]^d$ outside $\gamma(u)$, and $D(u) = u|_{B(u)}$. Then $[\{u \in L_{[-n,n]^d}(X) \ : \ u\delta \in L(X)\}]$ can be written as a disjoint union of sets $[L_{[-n,n]^d}(X) \cap \langle D(u) \rangle]$, meaning that except for a set of $\Lambda^{\delta}$-measure at most $2^{-0.5 \epsilon n}$, $\Lambda^{\delta}|_{[-n(1-\epsilon),n(1-\epsilon)]^d}$ can be written as a weighted average of $\mu\left([x|_{[-n(1-\epsilon),n(1-\epsilon)]^d}] \ | \ [D(u)]\right)$ over possible values of $D(u)$. Since $\mu$ is an MRF, this is in fact a weighted average of $\Lambda^{\gamma(u)}|_{[-n(1-\epsilon),n(1-\epsilon)]^d}$. Finally, by Fact~\ref{unlikelydisfact}, for any $\gamma(u) \neq \gamma(u')$ containing $[-n(1-\epsilon),n(1-\epsilon)]^d$,
\[
d\left(\Lambda^{\gamma(u)}|_{[-n(1-2\epsilon),n(1-2\epsilon)]^d}, \Lambda^{\gamma(u')}|_{[-n(1-2\epsilon),n(1-2\epsilon)]^d}\right) < Z (2n)^d 2^{-\frac{n\epsilon}{7}}.
\]

So, for each $\delta \in \Delta$, except for a set of measure at most $2^{-0.5 \epsilon n}$, $\Lambda^{\delta}|_{[-n(1-2\epsilon),n(1-2\epsilon)]^d}$ can be decomposed as a weighted average of $\Lambda^{\gamma(u)}|_{[-n(1-2\epsilon),n(1-2\epsilon)]^d}$, and each pair $\Lambda^{\gamma(u)}|_{[-n(1-2\epsilon),n(1-2\epsilon)]^d}$, $\Lambda^{\gamma(u')}|_{[-n(1-2\epsilon),n(1-2\epsilon)]^d}$ has total variational distance less than $Z(2n)^d 2^{-\frac{n\epsilon}{7}}$ for some universal constant $Z$. Therefore, for any $\delta, \delta' \in \Delta$, the pair $\Lambda^{\delta}|_{[-n(1-2\epsilon),n(1-2\epsilon)]^d}$, $\Lambda^{\delta'}|_{[-n(1-2\epsilon),n(1-2\epsilon)]^d}$ has total variational distance less than $2^{-0.5\epsilon n} + Z(2n)^d 2^{-\frac{n\epsilon}{7}}$. 

Recall that except for a set of $\mu$-measure at most $2^{-0.5\epsilon n}$, $\mu|_{[-n(1-2\epsilon),n(1-2\epsilon)]^d}$ can be decomposed as a weighted average over $\Lambda^{\delta}|_{[-n(1-2\epsilon),n(1-2\epsilon)]^d}$ for $\delta \in \Delta$. This means that for any $\delta \in \Delta$, 
\[
d\left(\mu|_{[-n(1-2\epsilon),n(1-2\epsilon)]^d}, \Lambda^{\delta}|_{[-n(1-2\epsilon),n(1-2\epsilon)]^d}\right) < 2\cdot 2^{-0.5\epsilon n} + Z(2n)^d 2^{-\frac{n\epsilon}{7}}.
\]
As $n \rightarrow \infty$, the right-hand side of this inequality approaches $0$, and $\mu(\Delta)$ approaches $1$. We have then verified (\ref{QWB}), so $\mu$ is quite weak Bernoulli, and therefore isomorphic to a Bernoulli measure.

\end{proof}

\section{Unrelatedness of the $\epsilon$-fullness condition to mixing conditions}
\label{unrelated}

Often in $\mathbb{Z}^d$ symbolic dynamics, useful properties of an SFT follow from some sort of uniform topological mixing condition, such as block gluing or uniform filling. Examples of such properties are being a measure-theoretically universal model (follows from UFP by \cite{RS}), the existence of dense periodic points (follows from block gluing by the argument in \cite{ward}), and entropy minimality (nonexistence of proper subshift of full entropy; follows from UFP by \cite{QS}). In this section, we will give some examples to show that $\epsilon$-fullness and uniform topological mixing properties are quite different notions for nearest neighbor $\mathbb{Z}^d$ SFTs.

\begin{example}[Non-topologically mixing]\label{notmixing}
Clearly $\epsilon$-fullness implies no mixing conditions at all; the SFT from Example~\ref{onepoint} can be made $\epsilon$-full for arbitrarily small $\epsilon$ by increasing the parameter $n$, but is never topologically mixing since there are no points which contain both a $0$ and a $1$. 
\end{example}

\begin{example}[Topologically mixing but not block gluing]\label{mixingnotgluing}

In \cite{QS}, a $\mathbb{Z}^2$ SFT called the \textbf{checkerboard island shift} is defined. We briefly describe its properties here, but refer the reader to \cite{S} for more details. The checkerboard island shift $C$ is defined by a set of legal $2 \times 2$ configurations, namely those appearing in Figure~\ref{checkerboard}, plus the $2 \times 2$ configuration of all blank symbols. Note that $C$ is not a nearest neighbor SFT; we will deal with this momentarily. It is shown in \cite{QS} that $C$ is topologically mixing, and in fact more is observed; any finite configuration $w \in L(C)$ is a subconfiguration of a configuration $w' \in L(C)$ with only blank symbols on the inner boundary. It is also shown in \cite{QS} that $C$ does not have the uniform filling property. In fact, their proof also shows that $C$ is not block gluing; they observe that any square checked configuration surrounded by arrows (e.g. the central $8 \times 8$ block of Figure~\ref{checkerboard}) forces a square configuration containing it of almost twice the size (e.g. the $14 \times 14$ configuration in Figure~\ref{checkerboard}), which clearly precludes block gluing. 

\begin{figure}[h]
\centering
\includegraphics[scale=0.4]{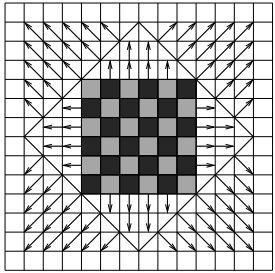}
\caption{A sample configuration from $C$}
\label{checkerboard}
\end{figure}


First, we define a version of $C$ which is nearest neighbor by passing to the second higher block presentation: define $C'$ to have alphabet $A'$ consisting of the $2 \times 2$ configurations from $L(C)$, and the only adjacency rule is that adjacent $2 \times 2$ blocks must agree on the pair of letters along their common edge. (For instance, for letters $a,b,c,d$ in the alphabet of $C$, $\begin{smallmatrix} a & b\\ c & d \end{smallmatrix} \ \begin{smallmatrix} b & c\\ d & a \end{smallmatrix}$ would be a legal adjacent pair of letters in $C'$.) $C'$ is topologically conjugate to $C$, and so shares all properties of $C$ described above. The reader can check that the alphabet $A'$ of $C'$ has size $79$.

We can now make versions of $C'$ which are $\epsilon$-full for small $\epsilon$; for any $N$, define $C'_N$ to have alphabet $A'_N = G \cup A'$, where $G$ is a set of $N$ ``free'' symbols with the following adjacency rules: each $G$-letter can appear next to any other $G$-letter, and each $G$-letter can legally appear next to any letter from $A'$ consisting of four blank symbols from the original alphabet of $C$. The reader may check that the addition of these new symbols does not affect the above arguments proving topological mixing and absence of block gluing, and so each $C'_N$ is topologically mixing, but not block gluing. Also, since any $G$-letter can be legally followed in any direction by any other $G$-letter, $C'_N$ is $\frac{79}{N+79}$-full. Clearly $C'_N$ can then be made $\epsilon$-full for arbitrarily small $\epsilon$ by increasing the parameter $N$.

This example can be trivially extended to a nearest neighbor $\mathbb{Z}^d$ SFT $C'^{(d)}_N$ by keeping the same alphabet and adjacency rules and adding no transition rules along the extra $d-2$ dimensions. Clearly $C'^{(d)}_N$ is still topologically mixing and not block gluing, and can be made $\epsilon$-full for arbitrarily small $\epsilon$ by taking $N$ large.

\end{example}

\begin{example}[Block gluing but not uniform filling]\label{gluingnotUFP}

In \cite{S}, a nearest neighbor $\mathbb{Z}^2$ SFT called the \textbf{wire shift} is defined. We briefly describe its properties here, but refer the reader to \cite{S} for more details. For any integer $N$, the wire shift $W_N$ has alphabet $A_N = G \cup B$, where $B$ consists of six ``grid symbols'' illustrated in Figure~\ref{wiresymb} and $G$ is a set of ``blank tiles'' labeled with integers from $[1,N]$. The adjacency rules are that neighboring letters must have edges which ``match up'' in the sense of Wang tiles; for instance, the leftmost symbol from Figure~\ref{wiresymb} could not appear immediately above the second symbol from the left. 

\begin{figure}[h]
\centering
\includegraphics[scale=0.4]{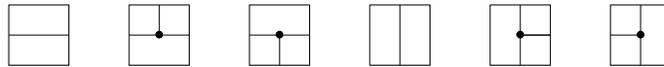}
\caption{The symbols $B$ from the alphabet of $W_N$}
\label{wiresymb}
\end{figure}

It is shown in Corollary 3.3 from \cite{S} that $W_N$ is block gluing for any $N$. In Lemma 3.4 from \cite{S}, it is shown that for any $N \geq 2$, $W_N$ is not entropy minimal, in other words $W_N$ contains a proper subshift with topological entropy $h(W_N)$. However, Lemma 2.7 from \cite{S} shows that any $\mathbb{Z}^d$ SFT with the UFP is entropy minimal, and so $W_N$ does not have the UFP for any $N \geq 2$. 

Again any letter of $G$ can be legally followed in any direction by any other letter of $G$, implying that $W_N$ is $\frac{6}{N+6}$-full. Clearly then $W_N$ can be made $\epsilon$-full for arbitrarily small $\epsilon$ by increasing the parameter $N$.

This example can also be trivially extended to a nearest neighbor $\mathbb{Z}^d$ SFT $W^{(d)}_N$ by keeping the same alphabet and adjacency rules and adding no transition rules along the extra $d-2$ dimensions. Clearly $W^{(d)}_N$ is still block gluing and does not have the UFP, and can be made $\epsilon$-full for arbitrarily small $\epsilon$ by taking $N$ large.

\end{example}

\begin{example}[Uniform filling]\label{UFP}
Any full shift is $\epsilon$-full for any $\epsilon > 0$, and obviously has the UFP.
\end{example}

\begin{remark}\label{supportnotgluing}
We note that we can say a bit more about the extended checkerboard island shift $C'_N$ of Example~\ref{mixingnotgluing} for large $N$. As we already noted, it was shown in \cite{QS} that every $w \in L(C)$ can be extended to a configuration $w' \in L(C)$ with only blank symbols from the alphabet of $C$ on the inner boundary. Then, for any $N$, we claim that any configuration $w \in L(C'_N)$ can be extended to a configuration $w' \in L(C'_N)$ with only $G$-letters on the inner boundary: any configuration $w \in L(C'_N)$ looks like a recoded version of a configuration from $C$, possibly with some $G$-letters replacing letters of $A'$ consisting of four blanks from the original alphabet of $A$, and so the same extension proved in \cite{QS} guarantees that $w$ can be extended to $w'$ with only $G$-letters and four-blank letters from $A'$, which can itself be surrounded by a boundary of $G$-letters. If we take any $N$ for which $\frac{79}{N+79} < \epsilon_d$ and denote the unique measure of maximal entropy on $C'_N$ by $\mu$, then by Corollary~\ref{gwords}, $\mu([w']) > 0$, clearly implying that $\mu([w]) > 0$. We have then shown that $\mu$ has full support, and in particular have shown the following:

\begin{theorem}
For any $\epsilon > 0$, there exists an $\epsilon$-full nearest neighbor $\mathbb{Z}^2$ SFT with unique measure of maximal entropy $\mu$ whose support is not block gluing.
\end{theorem}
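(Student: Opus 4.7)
The plan is to use the family $C'_N$ from Example~\ref{mixingnotgluing} and to show that for $N$ chosen large enough, its (unique) measure of maximal entropy has full support. Given $\epsilon > 0$, first pick $N$ large enough that $\frac{79}{N+79} < \min(\epsilon,\epsilon_2)$, where $\epsilon_2$ is the constant from Theorem~\ref{mainresult}. Then $C'_N$ is simultaneously $\epsilon$-full (by Example~\ref{mixingnotgluing}) and $\epsilon_2$-full, and is not block gluing (again by Example~\ref{mixingnotgluing}). In particular Theorem~\ref{mainresult}(A) applies, so $C'_N$ has a unique measure of maximal entropy $\mu$.

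Next, I would verify that every $w \in L(C'_N)$ extends to some $w' \in L(C'_N)$ whose inner boundary consists entirely of $G$-letters. As noted in Remark~\ref{supportnotgluing}, a configuration of $C'_N$ is essentially a configuration of $C$ (in its recoded, nearest-neighbor form) in which some of the ``all blanks'' $2\times 2$ tiles have been replaced by $G$-letters. The key input is the fact from \cite{QS} that any configuration $v \in L(C)$ is a subconfiguration of some $v' \in L(C)$ whose inner boundary consists only of blank tiles. Passing this through the second-higher-block recoding and then replacing the four-blanks tiles on the inner boundary by $G$-letters (which is allowed by the adjacency rules defining $C'_N$) produces the desired $w'$.

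Once this extension is in hand, the conclusion follows from the results already proved: since $w'$ has inner boundary entirely in $G$, Corollary~\ref{gwords} gives $\mu([w']) > 0$, so $\mu([w]) \geq \mu([w']) > 0$, and $w$ was an arbitrary element of $L(C'_N)$. Hence $\mathrm{supp}(\mu) = C'_N$, and since $C'_N$ itself is not block gluing, the support of $\mu$ is not block gluing, completing the argument.

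The routine but slightly fiddly step is the second one: checking carefully that the extension procedure of \cite{QS} for $C$ still works after the recoding that produces $C'$ and after the addition of the $N$ free $G$-letters in $C'_N$, and that the resulting boundary can be made entirely of $G$-letters rather than of four-blank letters of $A'$. Everything else is a direct invocation of the machinery already developed in the paper (Theorem~\ref{mainresult} for uniqueness, Corollary~\ref{gwords} for positive measure of $G$-bounded configurations, and Example~\ref{mixingnotgluing} for the failure of block gluing).
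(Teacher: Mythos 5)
Your proposal is correct and follows essentially the same route as the paper's argument in Remark~\ref{supportnotgluing}: take $C'_N$ with $N$ large, invoke the extension property of the checkerboard island shift from \cite{QS} to surround any configuration by $G$-letters, and apply Corollary~\ref{gwords} to conclude full support. The one small wrinkle is in your ``fiddly step'': rather than \emph{replacing} the four-blank letters on the inner boundary by $G$-letters (which could be illegal if such a letter has a non-four-blank neighbor just inside), the paper simply \emph{appends} an additional outer ring of $G$-letters around the extended configuration, which is always legal since $G$-letters may sit next to four-blank letters and to each other.
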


In other words, there really is no uniform mixing condition implied by the $\epsilon$-fullness property, even hidden within the support of the unique measure of maximal entropy $\mu$.

\end{remark}

\section{Comparison with existing sufficient conditions for uniqueness of measure of maximal entropy}
\label{comparison}

We for now focus on property (A) from Theorem~\ref{mainresult}, i.e. the fact that for any $d$ and small enough $\epsilon$, any $\epsilon$-full nearest neighbor $\mathbb{Z}^d$ SFT has a unique measure of maximal entropy. In this section, we will attempt to give proper context by giving some examples of conditions in the literature related to our condition.
We first need a definition.

\begin{definition}\label{safe}
For a nearest neighbor $\mathbb{Z}^d$ SFT $X$, a letter $a$ of the alphabet $A$ is a {\bf safe symbol} if $a$ is a legal neighbor of every letter of $A$ in every cardinal direction $\pm \vec{e_i}$.
\end{definition}

\begin{example}\label{markleypaul}
In \cite{MP}, the classical Dobrushin uniqueness criterion for Markov random fields was used to prove the following result:

\begin{theorem}[\cite{MP}, Proposition 5.1]\label{MPunique}
For any nearest neighbor $\mathbb{Z}^d$ SFT $X$ with alphabet $A$ such that at least $|A|\left(\frac{2d}{\sqrt{4d^2 + 1} + 1}\right)$ of the letters of $A$ are safe symbols, $X$ has a unique measure of maximal entropy.
\end{theorem}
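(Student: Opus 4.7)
The plan is to invoke Dobrushin's classical uniqueness criterion for Markov random fields. By Proposition~\ref{P1}, any measure of maximal entropy on $X$ is an MRF whose single-site conditional distribution $\Lambda^{\delta}|_{\{s\}}$ at any site $s$ is uniform over the set of letters in $A$ compatible with the boundary configuration $\delta \in L_{\partial \{s\}}(X)$. Dobrushin's theorem asserts that such a specification admits a unique compatible shift-invariant MRF as soon as the interdependence matrix has row-sums strictly less than $1$, and this would suffice to give uniqueness of the measure of maximal entropy.

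The core technical step is to estimate the interdependence coefficient $\rho_{s,t'}$ between a site $s$ and a neighbor $t'$: this is the supremum, over pairs of boundaries $\delta,\delta' \in A^{\partial\{s\}}$ which agree off of $t'$, of the total variational distance between the uniform distributions on the supports $L,L' \subseteq A$ of legal letters at $s$ under $\delta$ and $\delta'$. A routine calculation gives
\[
d\bigl(\mathrm{Unif}(L),\,\mathrm{Unif}(L')\bigr) = 1 - \frac{|L \cap L'|}{\max(|L|,|L'|)}.
\]
The safe symbol hypothesis now enters decisively: because every safe symbol is a legal neighbor of every letter in every cardinal direction, the set $S$ of safe symbols is contained in both $L$ and $L'$ for any choice of boundary. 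Hence $|L \cap L'| \geq |S|$, and every letter in the symmetric difference $L \triangle L'$ is necessarily not a safe symbol. Combined with $|L|,|L'| \leq |A|$, this yields an upper bound on $\rho_{s,t'}$ expressed purely in terms of $\sigma := |S|/|A|$ and $d$.

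Because $X$ is nearest neighbor, $\rho_{s,t'} = 0$ whenever $t'$ lies outside the $2d$-site neighborhood $N_s$ of $s$, so the Dobrushin row-sum collapses to a sum of at most $2d$ terms. Verifying that the algebraic threshold $\sigma \geq 2d/(\sqrt{4d^2+1}+1)$---which rationalises to the quadratic condition $d\sigma^2 + \sigma - d \geq 0$ and equivalently places $\sigma$ above the positive root of that quadratic---forces the resulting Dobrushin row-sum strictly below $1$ then completes the argument.

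The step I expect to be the main obstacle is extracting the precise form of the per-neighbor estimate that produces the stated quadratic threshold. A crude substitution of $|L \cap L'| \geq \sigma|A|$ and $\max(|L|,|L'|) \leq |A|$ into the TV formula yields only a bound linear in $\sigma$, which produces a threshold of the wrong shape. The quadratic threshold in the theorem appears to require keeping the ratio $|L \cap L'|/\max(|L|,|L'|)$ in a sharper form and genuinely exploiting that both $L$ and $L'$ contain all of $S$ while differing only among non-safe symbols; balancing these two constraints simultaneously, rather than using them one at a time, is what should deliver the bound needed for $2d \cdot \rho_{s,t'} < 1$ exactly at the stated threshold.
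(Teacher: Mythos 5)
This statement is quoted from \cite{MP} and the paper gives no proof of its own beyond the remark that it was established via the classical Dobrushin uniqueness criterion; your strategy therefore matches the paper's attribution exactly (Dobrushin plus Proposition~\ref{P1} to reduce uniqueness of the measure of maximal entropy to uniqueness of the MRF with specification $\{\Lambda^{\delta}\}$). However, as written your proposal is not a complete proof: you correctly set up the interdependence coefficient, the identity $d(\mathrm{Unif}(L),\mathrm{Unif}(L')) = 1 - |L\cap L'|/\max(|L|,|L'|)$, and the two structural facts (the safe symbols $S$ lie in every $L$, and $L\triangle L'$ consists of non-safe symbols), but you then stop at the quantitative step and explicitly flag it as an unresolved obstacle. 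That step is the theorem, so the argument is left with a genuine gap.

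The good news is that the obstacle you identify is illusory. Writing $\sigma := |S|/|A|$, your ``crude'' substitution $|L\cap L'|\geq \sigma|A|$ and $\max(|L|,|L'|)\leq |A|$ gives $\rho_{s,t'}\leq 1-\sigma$, hence a Dobrushin row-sum at most $2d(1-\sigma)$, which is $<1$ exactly when $\sigma > 1-\tfrac{1}{2d}$. Since $\frac{2d}{\sqrt{4d^2+1}+1} = \frac{\sqrt{4d^2+1}-1}{2d} > \frac{2d-1}{2d}$ (equivalently $4d^2 < 4d^2+1$), the hypothesis of the theorem already forces $\sigma > 1 - \tfrac{1}{2d}$, so the linear bound suffices; the quadratic threshold $d\sigma^2+\sigma-d\geq 0$ is simply the (lossier) constant one obtains by bounding the two halves of the total-variation sum separately, namely $\frac{1}{2}\bigl(\frac{|L\triangle L'|}{\min(|L|,|L'|)} + \frac{\bigl||L|-|L'|\bigr|}{\max(|L|,|L'|)}\bigr) \leq \frac{1}{2}\bigl(\frac{1-\sigma}{\sigma} + (1-\sigma)\bigr) = \frac{(1-\sigma)(1+\sigma)}{2\sigma}$, whose row-sum condition $2d\cdot\frac{1-\sigma^2}{2\sigma}<1$ is precisely the stated quadratic. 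So there is no need to ``balance the two constraints simultaneously.'' The point that does require genuine care, and which you should address rather than the constant, is the applicability of Dobrushin's theorem to a specification with hard constraints: the conditional distributions $\Lambda^{\delta}|_{\{s\}}$ are only defined for admissible boundaries and vanish on forbidden letters, whereas the textbook Dobrushin theorem assumes a positive specification. Here the presence of at least one safe symbol saves you (every locally admissible configuration extends by filling with a safe symbol, so $LA(X)=L(X)$ and every admissible boundary admits a nonempty $L\supseteq S$), but this needs to be said explicitly for the coupling argument underlying Dobrushin's criterion to go through.
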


This seems to be the first result to show that a large proportion of safe symbols is enough to guarantee uniqueness.

\end{example}

\begin{example}\label{burtonsteif}
In \cite{BS1}, methods from percolation theory, following techniques from \cite{vdB}, were used to prove the following result.

\begin{theorem}[\cite{BS1}, Theorem 1.17]\label{BSunique}
For any nearest neighbor $\mathbb{Z}^d$ SFT $X$ with alphabet $A$ such that at least $|A|(\sqrt{1 - p_c(\mathbb{Z}^d)})$ of the letters of $A$ are safe symbols, $X$ has a unique measure of maximal entropy, where $p_c(\mathbb{Z}^d)$ is the critical probability for site percolation on the $d$-dimensional hypercubic lattice.
\end{theorem}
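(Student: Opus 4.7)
The plan is to adapt van den Berg's disagreement percolation technique, as signaled by the authors' own reference to \cite{vdB}. I would first assume for contradiction that $X$ supports two distinct measures of maximal entropy $\mu_1 \neq \mu_2$. By Proposition~\ref{P1}, both are MRFs sharing the same uniform conditional distributions $\Lambda^{\delta}$ on finite shapes given their outer boundaries. It therefore suffices to prove that for every finite $S \subseteq \mathbb{Z}^d$ and every $\epsilon > 0$, one has $d(\Lambda^{\delta}|_S, \Lambda^{\delta'}|_S) < \epsilon$ whenever $\delta, \delta'$ are boundary conditions on a sufficiently large box $\Lambda_n = [-n,n]^d$ surrounding $S$, since each $\mu_i|_S$ decomposes as a weighted average of the $\Lambda^{\delta}|_S$.

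On $\Lambda_n$ with boundary conditions $\delta, \delta'$, I would build a joint coupling of the two configurations by revealing sites one at a time in an order driven by the current disagreement front. The crucial role of the safe symbols is this: because there are $s \geq |A|\sqrt{1 - p_c(\mathbb{Z}^d)}$ of them and they are legal neighbors of every letter in every direction, for any site $t$ the single-site conditional distribution of $\Lambda^{\delta}$ at $t$ (given any partial configuration on $N_t$) is uniform on a subset of $A$ of size at most $|A|$ containing all $s$ safe symbols, and therefore assigns mass at least $s/|A|$ to them. The coupling between the two samples would be built in two stages at each site: first, independently for each sample, decide whether the value at $t$ will be a safe symbol (each with marginal probability at least $s/|A|$); then, conditional on both being safe, couple them to agree on the same safe symbol, which is consistent with both marginal laws because safe symbols are freely interchangeable without affecting local admissibility. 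A disagreement at $t$ can then arise only if at least one of the two samples produces a non-safe symbol, an event of probability at most $1 - (s/|A|)^2 \leq p_c(\mathbb{Z}^d)$.

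Van den Berg's standard disagreement-percolation lemma then yields that any site where the final coupled samples disagree must be connected to $\partial \Lambda_n$ by a path of ``disagreement-source'' sites (sites where at least one sample chose a non-safe symbol), and this path set is stochastically dominated by Bernoulli site percolation on $\mathbb{Z}^d$ with parameter at most $p_c(\mathbb{Z}^d)$. In the subcritical regime the probability of such a path from any fixed site to $\partial \Lambda_n$ tends to zero as $n \to \infty$; a union bound over $S$ yields $d(\Lambda^{\delta}|_S, \Lambda^{\delta'}|_S) \to 0$ uniformly in $\delta, \delta'$. This forces $\mu_1|_S = \mu_2|_S$ for every finite $S$, and hence $\mu_1 = \mu_2$, giving the contradiction.

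The main obstacle is making the two-stage safe-vs-non-safe coupling rigorous, simultaneously (i) preserving the correct marginal single-site distributions on both sides, (ii) keeping the ``safe'' indicators of the two samples independent conditionally on the history so as to exponentiate $s/|A|$ into $(s/|A|)^2$, and (iii) maintaining the monotonicity structure needed to stochastically dominate the disagreement set by genuine Bernoulli site percolation on $\mathbb{Z}^d$. A secondary technical issue is the boundary case where $(s/|A|)^2 = 1 - p_c(\mathbb{Z}^d)$ exactly; this either requires appealing to the absence of an infinite open cluster at $p_c$ for site percolation on $\mathbb{Z}^d$, or a tiny perturbation argument reducing to the strictly subcritical case.
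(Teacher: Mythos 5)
The paper does not actually prove this theorem; it is quoted (in reformulated form) from \cite{BS1} as background in Section~\ref{comparison}, so there is no in-paper proof to compare against. Your sketch is precisely the disagreement-percolation argument, following \cite{vdB}, that the paper attributes to the original proof, and it also runs parallel to the coupling machinery the paper builds for its own Theorem~\ref{mainresult}(A): site-by-site revelation driven by the disagreement front, a forced path of ``failure'' sites from any disagreement to the boundary (the analogue of Fact~\ref{path}), and domination of that path set by subcritical Bernoulli site percolation. Indeed the safe-symbol hypothesis makes your situation cleaner than the paper's, since the mass-at-least-$s/|A|$ bound holds for \emph{arbitrary} boundary conditions, so you can dispense entirely with the paper's detour through contours of $G$-letters. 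Two cautions. First, the single-site marginal of $\Lambda^{\zeta}$ at $t$ given only a partial configuration is \emph{not} uniform on a subset of $A$; by Proposition~\ref{P1} it is a mixture, over completions of the full neighborhood $N_t$, of such uniform distributions. Your conclusion survives because each term of the mixture gives the safe set mass at least $s/|A|$, and because the conditional-on-safe law is exactly uniform on the $s$ safe symbols under every history (swapping one safe symbol for another at $t$ preserves both local and global admissibility), which is what legitimizes the ``agree on a common safe symbol'' stage; you should say this rather than assert uniformity of the full marginal. Second, as you note, the non-strict hypothesis only gives $1-(s/|A|)^2 \leq p_c(\mathbb{Z}^d)$, and the argument needs $P_p(0 \leftrightarrow \partial[-n,n]^d) \rightarrow 0$, which is automatic only for $p < p_c$; this boundary case is an artifact of the paper's restatement and is worth flagging exactly as you did, but it is not a defect of your method.
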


(This is not the precise statement of their theorem, but is an equivalent reformulation which better contrasts with Theorem~\ref{MPunique}.) We will not define $p_c(\mathbb{Z}^d)$ or discuss percolation theory here; for a good introduction to the subject, see \cite{grimmett}.

\end{example}

Theorem~\ref{BSunique} is stronger than Theorem~\ref{MPunique} for $d = 2$: it is known that $p_c(\mathbb{Z}^2) > 0.5$, so $\sqrt{1 - p_c(\mathbb{Z}^2)} < \sqrt{0.5} < \frac{4}{\sqrt{17} + 1}$. However, for large $d$, $p_c(\mathbb{Z}^d) = \frac{1 + o(1)}{2d}$ (\cite{K}), therefore $\frac{2d}{\sqrt{4d^2 + 1} + 1} = 1 - \frac{1 + o(1)}{2d} < 1 - \frac{1 + o(1)}{4d} = \sqrt{1 - p_c(\mathbb{Z}^d)}$, implying that Theorem~\ref{MPunique} is stronger for large $d$.

\begin{example}\label{haggstrom}
A slightly more general result comes from \cite{H}, which requires another definition.

\begin{definition}
For any $\mathbb{Z}^d$ subshift $X$ with alphabet $A$, the \textbf{generosity of $X$} is 
\[
G(X) = \frac{1}{|A|} \min_{\delta} |\{a \in A \ : \ a\delta \in L(X)\}|,
\]
where the minimum ranges over $\delta \in A^{\mathbb{Z}^d \setminus \{0\}}$ s.t. $\delta a \in X$ for at least one $a \in A$.
\end{definition}

\begin{theorem}[\cite{H}, Theorem 1.12]\label{Hunique}
Any nearest neighbor $\mathbb{Z}^d$ SFT with generosity at least $\frac{1}{1+p_c(\mathbb{Z}^d)}$ has a unique measure of maximal entropy.
\end{theorem}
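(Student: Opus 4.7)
The plan is to adapt van den Berg's disagreement percolation technique (\cite{vdB}) to the setting of measures of maximal entropy, using the MRF characterization of Proposition~\ref{P1} as the bridge. By that proposition, any measure of maximal entropy on a nearest neighbor $\mathbb{Z}^d$ SFT $X$ is an MRF whose single-site conditional distribution at $s$, given the values on the neighbor set $N_s$, is uniform over the set of letters of $A$ that form a legal configuration with those neighbors. Hence any two measures of maximal entropy $\mu_1, \mu_2$ share the same Gibbsian specification, and uniqueness reduces to showing that this specification admits at most one shift-invariant MRF.

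First I would fix a large box $\Lambda = [-n,n]^d$ and two boundary configurations $\eta, \eta' \in L_{\partial \Lambda}(X)$, then construct a coupling $\lambda$ of $\Lambda^{\eta}$ and $\Lambda^{\eta'}$ by assigning letters one site at a time in a fixed order. At each step, with the two configurations partially defined, the coupling at the next site $s$ is chosen to be the optimal coupling of the two single-site conditional distributions given the already assigned values. A standard ``Fact~\ref{path}''-style lemma then shows that $\lambda$-almost surely the origin $0$ disagrees in the two configurations only if there is a path of disagreement sites from $0$ to $\partial \Lambda$: a site whose two conditionings give identical single-site distributions must be coupled to agreement.

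Next I would stochastically dominate the set of disagreement sites by independent Bernoulli site percolation on $\mathbb{Z}^d$ at some parameter $p$ depending on $G(X)$. The hypothesis $G(X) \geq 1/(1+p_c(\mathbb{Z}^d))$ is precisely calibrated so that this $p$ lies below $p_c(\mathbb{Z}^d)$; the disagreement cluster of the origin is therefore $\lambda$-almost surely finite and, for $n$ large, does not reach $\partial \Lambda$, so $\lambda(w_1(0) \neq w_2(0)) \to 0$. Averaging over $\eta$ under $\mu_1$ and $\eta'$ under $\mu_2$ yields agreement of the single-site marginals at $0$, and the same argument on a general finite shape $S$ gives $\mu_1 = \mu_2$.

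The main obstacle is producing the correct per-site disagreement bound so that the percolation comparison yields exactly the threshold $1/(1+p_c(\mathbb{Z}^d))$. A naive total-variation bound between two uniform distributions on subsets of $A$ of sizes at least $G(X)|A|$ gives only a disagreement probability on the order of $2(1-G(X))$, which is strictly weaker than what is needed. The sharp bound requires a more careful conditional coupling that exploits the uniform structure of the specification: roughly, one builds the coupling so that, conditioned on the history, the probability that the choice forced by the first measure is inadmissible for the second is at most $(1-G(X))/G(X)$, which is $\leq p_c(\mathbb{Z}^d)$ exactly under the hypothesis. Verifying this ratio-type bound uniformly over histories, and turning it into a genuine stochastic domination by subcritical Bernoulli site percolation (in the spirit of \cite{vdBM}), is the technical heart of the proof.
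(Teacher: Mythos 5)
First, a structural point: the paper does not prove this statement at all. Theorem~\ref{Hunique} is quoted verbatim from \cite{H} (Theorem 1.12 there) purely for comparison purposes in Section~\ref{comparison}, so there is no internal proof to measure your attempt against. That said, your outline is a reasonable reconstruction of the strategy H\"{a}ggstr\"{o}m actually uses, namely disagreement percolation in the style of \cite{vdB} and \cite{vdBM}, entered through the uniform-specification characterization of measures of maximal entropy (Proposition~\ref{P1}); it is also the same circle of ideas the present paper deploys in its proof of part (A) of Theorem~\ref{mainresult} (a site-by-site optimal coupling, a Fact~\ref{path}-type ``disagreement path'' lemma, and a percolation-flavored bound on such paths).

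As written, however, the proposal is not a proof: the step you yourself label ``the technical heart'' is exactly the step that is missing, and your diagnosis of where the difficulty lies is partly misplaced. For two uniform distributions on subsets $A_1, A_2 \subseteq A$ with $|A_1|, |A_2| \geq G|A|$, the total variational distance equals $1 - |A_1 \cap A_2|/\max(|A_1|,|A_2|) \leq (1-G)/G$, so the ratio bound $(1-G)/G \leq p_c(\mathbb{Z}^d)$ you are after is already the exact worst-case total variation between such uniform laws; no cleverer coupling is needed for that arithmetic, and your claim that the ``naive'' bound is only of order $2(1-G)$ is a miscalculation. The genuine gap is elsewhere: in your sequential construction, the law to be coupled at the next site $s$ is not a uniform distribution on a subset of $A$ but the marginal at $s$ of the uniform distribution on configurations of the as-yet-unassigned region given the history, and such marginals need not be uniform over letters, so the subset computation does not apply to them directly. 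Arranging the exploration so that one only ever needs the bound for full-neighborhood conditionals (or using the specific recursive coupling of \cite{vdBM}), and then upgrading the per-site estimate to a genuine stochastic domination of the disagreement cluster by subcritical Bernoulli site percolation, is the actual content of H\"{a}ggstr\"{o}m's argument and is absent here. The surrounding reductions are fine: by Proposition~\ref{P1} every measure of maximal entropy is an MRF with the uniform specification, so uniqueness of the shift-invariant MRF for that specification does suffice.
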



The strength of Theorem~\ref{Hunique} is that it allows one to consider SFTs without safe symbols. For instance, the {\bf $n$-checkerboard} $\mathbb{Z}^d$ SFT, defined by alphabet $A = \{1,\ldots,n\}$ and the adjacency rule that no letter may be adjacent to itself in any cardinal direction, has generosity $1 - \frac{2d}{n}$, which satisfies the hypotheses of Theorem~\ref{Hunique} for large $n$. However, it has no safe symbols, and so cannot satisfy the hypotheses of Theorems~\ref{MPunique} and ~\ref{BSunique}.

\end{example}

\begin{remark}
We note that Haggstrom also showed in \cite{H} that it is not possible for any $d \geq 2$ to give a lower bound on $G(X)$ in Theorem~\ref{Hunique} which would imply uniqueness of measure of maximal entropy for all $\mathbb{Z}^d$ SFTs (not just nearest neighbor): Theorem 1.13 from \cite{H} states that for any $d \geq 2$ and any $\epsilon > 0$, there exists a $\mathbb{Z}^d$ SFT $X$ with more than one measure of maximal entropy such that $G(X) \geq 1-\epsilon$. This implies that such a uniform lower bound would be impossible for part (A) of our Theorem~\ref{mainresult} as well, since any nearest neighbor $\mathbb{Z}^d$ SFT with generosity more than $1 - \epsilon$ is clearly $\epsilon$-full. 

\end{remark}

One property shared by all of each of these conditions is that they require all letters of $A$ to satisfy some fairly stringent adjacency properties. For instance, if $A$ contains even one letter which has only a single allowed neighbor in some direction, then it has at most one safe symbol and its generosity equals $\frac{1}{|A|}$ (the minimum possible amount). The strength of the $\epsilon$-fullness condition is that it allows the existence of a small set of letters with bad adjacency properties, as long as the rest of the symbols are ``close enough'' to being safe symbols. 

\section{Optimal value of $\beta_d$}
\label{critical}



It is natural to define the optimal value of $\beta_d$ in Theorem~\ref{mainresult2} which guarantees uniqueness of the measure of maximal entropy.

\begin{definition}
For every $d$, we define
\begin{multline*}
\alpha_d := \inf \{\alpha \ : \ \exists \textrm{a nearest neighbor $\mathbb{Z}^d$ SFT $X$}\\ \textrm{with more than one measure of maximal entropy for which $h(X) \geq (\log |A|) - \alpha$.}\}
\end{multline*}
\end{definition}

We can determine $\alpha_1$ exactly.

\begin{theorem}\label{1Dalpha}
$\alpha_1 = \log 2$.
\end{theorem}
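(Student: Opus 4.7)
The plan is to establish $\alpha_1 = \log 2$ by proving both inequalities.

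For the upper bound $\alpha_1 \leq \log 2$, I would exhibit an explicit family realizing the infimum. For each $n \geq 1$, let $X_n$ be the nearest neighbor $\mathbb{Z}$ SFT on alphabet $A = \{1, \dots, 2n\}$ whose forbidden pairs are exactly the pairs $(a,b)$ for which exactly one of $a, b$ lies in $\{1, \dots, n\}$. Then $X_n$ is the disjoint union of the full shift on $\{1, \dots, n\}$ and the full shift on $\{n+1, \dots, 2n\}$, so $h(X_n) = \log n = \log|A| - \log 2$, and the two uniform Bernoulli measures on the respective halves are distinct measures of maximal entropy. This places $\log 2$ in the set defining $\alpha_1$, hence $\alpha_1 \leq \log 2$.

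For the lower bound $\alpha_1 \geq \log 2$, I would invoke the standard graph-theoretic description of $1$D nearest neighbor SFTs. Such an $X$ is the vertex shift of a directed graph $G$ on vertex set $A$ (with an edge $a \to b$ whenever $ab$ is allowed), and if $M$ denotes its $0$-$1$ adjacency matrix then $h(X) = \log \lambda(M)$. The classical structure theorem for measures of maximal entropy of one-dimensional SFTs (see \cite{LM}, Chapter 4) identifies the ergodic measures of maximal entropy of $X$ with the Parry measures supported on strongly connected components $C \subseteq G$ attaining $\lambda(C) = \lambda(M)$. In particular, having more than one measure of maximal entropy requires at least two distinct strongly connected components $C_1, C_2$ with $\lambda(C_1) = \lambda(C_2) = \lambda(M)$; since these have disjoint vertex sets $A_1, A_2 \subseteq A$, and since a $0$-$1$ matrix of size $k$ has spectral radius at most $k$,
\[
\lambda(M) \leq \min(|A_1|, |A_2|) \leq \frac{|A_1| + |A_2|}{2} \leq \frac{|A|}{2},
\]
so $h(X) \leq \log|A| - \log 2$. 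Contrapositively, $h(X) > \log|A| - \log 2$ forces uniqueness of the measure of maximal entropy, which yields $\alpha_1 \geq \log 2$.

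The only non-elementary input is the structure theorem identifying MMEs of reducible $1$D SFTs with Parry measures on maximal-spectral-radius irreducible components; once this is invoked, everything reduces to the trivial spectral bound $\lambda \leq k$ for a $k \times k$ $0$-$1$ matrix. There is therefore no real obstacle, and the proof should be quite short.
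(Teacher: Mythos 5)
Your proposal is correct and follows essentially the same route as the paper: the upper bound via the disjoint union of two full shifts on $n$ symbols, and the lower bound by noting that two measures of maximal entropy force two disjoint irreducible components each of whose alphabet must have size at least $e^{h(X)}$ (your spectral-radius bound $\lambda \leq k$ for a $k \times k$ $0$-$1$ matrix is exactly this statement), giving $|A| \geq 2e^{h(X)}$.
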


\begin{proof}
Suppose that $X$ is a nearest neighbor $\mathbb{Z}$ SFT with alphabet $A$ and more than one measure of maximal entropy. Since any measure of maximal entropy of $X$ is supported in an irreducible component of $X$, clearly $X$ has at least two irreducible components of entropy $h(X)$. (See \cite{LM} for the definition of irreducible components and a simple proof of this fact.) Each of these components must then have alphabet size at least $e^{h(X)}$, and so $|A| \geq 2e^{h(X)}$. This implies that $(\log |A|) - h(X) \geq \log 2$, and so $\alpha_1 \geq \log 2$.

However, the same idea shows that there exists a nearest neighbor $\mathbb{Z}$ SFT $X$ with multiple measures of maximal entropy and for which $(\log |A|) - h(X) = \log 2$; if $X$ is the union of two disjoint full shifts on $n$ symbols, then each of the full shifts supports a measure of maximal entropy for $X$. Therefore, $\alpha_1 \leq \log 2$, completing the proof. 

\end{proof}

We will now show that $\alpha_2 < \log 2$, meaning that in two dimensions, it is possible to have an SFT $X$ in which two disjoint portions of the alphabet induce different measures of maximal entropy and can still coexist within the same point of $X$ (unlike the one-dimensional case).

\begin{theorem}\label{iceberg}
$\alpha_2 < \log 2$.
\end{theorem}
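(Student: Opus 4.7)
The plan is to exhibit an iceberg-type nearest neighbor $\mathbb{Z}^2$ SFT $X_n$ whose alphabet has size $2n$, verify that its topological entropy strictly exceeds $\log n$, and invoke a Peierls contour argument to show it has at least two measures of maximal entropy. Specifically, for each $n$ let $X_n$ be the nearest neighbor $\mathbb{Z}^2$ SFT with alphabet $A_n = \{-n, -(n-1), \ldots, -1, +1, \ldots, +n\}$ (so $|A_n| = 2n$), in which two adjacent symbols $a,b$ are compatible if and only if $\mathrm{sgn}(a)=\mathrm{sgn}(b)$ or both $|a|,|b|\leq 1$; this is the iceberg shift of Burton and Steif (see \cite{BS0}) with ``skin depth'' one.

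For the entropy lower bound, I would tile $[1,3M]^2$ by $M^2$ disjoint $3\times 3$ blocks and independently choose in each block either (a) any of the $n^9$ all-positive configurations, or (b) the fixed configuration whose central cell is $-n$ and whose eight surrounding cells are all $+1$. Every boundary cell of a type (a) block is positive and every boundary cell of a type (b) block is $+1$, so any two adjacent blocks have compatible boundary cells and every such composite configuration is globally admissible. This yields $(n^9+1)^{M^2}$ globally admissible configurations on $[1,3M]^2$, so
\begin{equation*}
h(X_n) \;\geq\; \tfrac{1}{9}\log(n^9+1) \;>\; \log n,
\end{equation*}
which immediately gives $(\log |A_n|)-h(X_n) \leq \log 2 - \tfrac{1}{9}\log(1+n^{-9}) < \log 2$ for every $n\geq 1$.

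For the existence of multiple measures of maximal entropy (for large $n$), I would apply the classical Peierls contour argument of \cite{BS0}. For each $N$, let $\nu_N^+$ denote the uniform probability measure on globally admissible configurations on $[-N,N]^2$ whose restriction to $\partial [-N,N]^2$ equals the constant $+n$. Any site occupied by a symbol of absolute value at least $2$ and negative sign must be enclosed by a closed contour of cells of absolute value $1$, since no ``heavy'' positive and ``heavy'' negative cells may be adjacent. There are at most $L\cdot 3^L$ closed contours of length $L$ surrounding the origin, and the standard contour-flipping injection (replace each $\pm 1$ value on the contour by an arbitrary positive value and flip each $-k$ in the enclosed region to $+k$) shows that the $\nu_N^+$-probability of any fixed such contour is at most $(2/n)^L$. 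Summing $L(6/n)^L$ over $L\geq 4$ gives a quantity tending to zero as $n\to\infty$, so for large $n$ we have $\nu_N^+(\text{origin has positive sign}) > 1/2 + c$ uniformly in $N$ for some $c>0$. A weak subsequential limit $\mu^+$ is then a shift-invariant measure on $X_n$ with the same bias; by the global sign-flip symmetry one obtains $\mu^-$ biased toward negative symbols. Both are measures of maximal entropy because the uniform measures $\nu_N^+$ asymptotically realize the full topological complexity on $[-N,N]^2$, and they differ on the event ``origin has positive sign.''

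The main obstacle is executing the Peierls estimate rigorously: one must precisely define the outermost contour around the origin, verify injectivity of the flipping map into globally admissible configurations, and justify that $\mu^\pm$ are genuine measures of maximal entropy rather than merely Gibbs states of the corresponding specification. These technicalities are handled in \cite{BS0} and carry over essentially verbatim to this skin-depth-one setting. Combining the two ingredients yields a nearest neighbor $\mathbb{Z}^2$ SFT with at least two measures of maximal entropy and $(\log|A_n|)-h(X_n)<\log 2$, establishing $\alpha_2 < \log 2$.
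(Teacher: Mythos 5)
Your overall strategy is the same as the paper's: both arguments take the Burton--Steif iceberg model, rely on \cite{BS0} for the existence of two ergodic measures of maximal entropy when the parameter is large, and reduce the theorem to showing that the topological entropy strictly exceeds the logarithm of half the alphabet size. (The paper simply cites the non-uniqueness result from \cite{BS0} outright rather than re-running the Peierls argument, so your deferral of the contour estimate, the injectivity of the flipping map, and the identification of the weak limits as measures of maximal entropy to \cite{BS0} is consistent with what the paper itself does; for the last point the relevant tool is the converse to Proposition~\ref{P1} for strongly irreducible SFTs, Proposition 1.21 of \cite{BS1}.)

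There is, however, a concrete error in the one step you execute in detail. Your type (b) $3\times 3$ block, with central cell $-n$ and all eight surrounding cells equal to $+1$, is not locally admissible for $n\geq 2$: under your own compatibility rule (equivalently, the iceberg rule that a positive may neighbor a negative only if they are $+1$ and $-1$), the symbol $-n$ cannot sit next to $+1$. As written, the count $(n^9+1)^{M^2}$ includes inadmissible configurations and the bound $h(X_n)\geq \tfrac{1}{9}\log(n^9+1)$ is unsupported. The fix is immediate: take the central cell to be $-1$ instead of $-n$. Then every internal adjacency of a type (b) block is $(+1,+1)$ or $(+1,-1)$, both legal; the block boundaries are still entirely positive, so adjacent blocks are compatible and the composite configuration extends to a point of $X_n$ by filling the complement with $+1$'s; and the same count goes through verbatim, giving $h(X_n)\geq \tfrac{1}{9}\log(n^9+1) > \log n$ and hence $(\log|A_n|)-h(X_n)<\log 2$. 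With that correction your entropy bound is fine and is, if anything, more elementary than the paper's, which instead counts all-positive configurations containing many disjoint crosses of $+1$'s and independently flips cross-centers to $-1$, obtaining $h(\mathcal{I}_M)\geq \log M + \frac{\log 2}{5M^5}$; both routes yield the needed strict inequality.
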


\begin{proof}

In \cite{BS0}, an example is given of a strongly irreducible nearest neighbor $\mathbb{Z}^2$ SFT with two ergodic measures of maximal entropy, later called the \textbf{iceberg model} in the literature. We quickly recall the definition of this SFT. 

The iceberg model, which we denote by $\mathcal{I}_M$, is defined by a positive integer parameter $M$. The alphabet is $A_M = \{-M, \ldots, -2, -1, 1, 2, \ldots, M\}$. The adjacency rules of $\mathcal{I}_M$ are that any letters with the same sign may neighbor each other, but a positive may only sit next to a negative if they are $1$ and $-1$. It is shown in \cite{BS0} that for any $M > 4e 28^2$, $\mathcal{I}_M$ has exactly ergodic two measures of maximal entropy. 

We will now show that $h(\mathcal{I}_M)$ is strictly greater than $\log M$, which will imply that $(\log |A_M|) - h(\mathcal{I}_M) < \log 2$. For any $n$, define the set $P_n$ of all configurations with shape $[1,n]^2$ consisting of letters from $\{1,\ldots,M\}$. Clearly $P_n \subset L_{[1,n]^d}(\mathcal{I}_M)$. By ergodicity of the Bernoulli measure giving each positive letter equal probability (or simply the Strong Law of Large Numbers), if we define $G_n$ to be the set of configurations in $P_n$ with at least $\frac{n^2}{M^5}$ occurrences of the configuration $\begin{smallmatrix} & 1 & & \\ 1 & 1 & 1\\ & 1 & & \end{smallmatrix}$, then $\lim_{n \rightarrow \infty} \frac{\log |G_n|}{n^2} = \log M$. In any $u \in G_n$, it is simple to choose at least $\frac{n^2}{5M^5}$ occurrences of $\begin{smallmatrix} & 1 & & \\ 1 & 1 & 1\\ & 1 & & \end{smallmatrix}$ with disjoint centers. Then, one can construct a set $f(u)$ of at least $2^{\frac{n^2}{5M^5}}$ configurations in $L_{[1,n]^2}(\mathcal{I}_M)$ by independently either replacing the center of each of these $1$-crosses by $-1$ or leaving it as a $1$. It is easy to see that $f(u) \cap f(u') = \varnothing$ for any $u \neq u' \in G_n$. Therefore, $|L_{[1,n]^2}(\mathcal{I}_M)| \geq |\bigcup_{u \in G_n} f(u)| \geq 2^{\frac{n^2}{5M^5}} |G_n|$, implying that
\[
h(\mathcal{I}_M) = \lim_{n \rightarrow \infty} \frac{\log |L_{[1,n]^2}(\mathcal{I}_M)}{n^2} \geq \lim_{n \rightarrow \infty} \frac{\log |G_n|}{n^2} + \frac{\log 2}{5M^5} = \log M + \frac{\log 2}{5M^5}.
\]
Therefore, $(\log|A_M|) - h(\mathcal{I}_M) \leq \log 2 - \frac{\log 2}{5M^5}$, and so since $\mathcal{I}_M$ has two measures of maximal entropy for $M = 10000 > 4e 28^2$, $\alpha_2 \leq \log 2 - \frac{\log 2}{5 \cdot 10^{20}}$.

\end{proof}

Unsurprisingly, the sequence $\alpha_d$ is nonincreasing in $d$.

\begin{theorem}\label{noninc}
$\alpha_{d+1} \leq \alpha_d$ for all $d$.
\end{theorem}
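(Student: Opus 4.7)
The plan is to show that any nearest neighbor $\mathbb{Z}^d$ SFT with multiple measures of maximal entropy and entropy close to $\log|A|$ can be ``trivially extended'' to a nearest neighbor $\mathbb{Z}^{d+1}$ SFT with the same properties.

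Fix any $\alpha > \alpha_d$ and let $X$ be a nearest neighbor $\mathbb{Z}^d$ SFT with alphabet $A$, with $h(X) \geq (\log|A|)-\alpha$, and supporting at least two distinct measures of maximal entropy $\mu_1,\mu_2$. Define $X'$ to be the nearest neighbor $\mathbb{Z}^{d+1}$ SFT with alphabet $A$ whose forbidden adjacent pairs are exactly those of $X$ in each of the first $d$ cardinal directions, with no restrictions imposed in the $\vec{e}_{d+1}$ direction. Equivalently, $X' = \{x \in A^{\mathbb{Z}^{d+1}} : x|_{\mathbb{Z}^d \times \{k\}} \in X \text{ for every } k \in \mathbb{Z}\}$, where each horizontal slice is regarded as a point of $A^{\mathbb{Z}^d}$.

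First I would verify that $h(X') = h(X)$. Since configurations on $[1,n]^{d+1}$ in $L(X')$ correspond exactly to $n$-tuples of configurations on $[1,n]^d$ in $L(X)$ (one per layer, chosen independently), we have $|L_{[1,n]^{d+1}}(X')| = |L_{[1,n]^d}(X)|^n$, and therefore
\[
h(X') = \lim_{n\to\infty}\frac{\log|L_{[1,n]^{d+1}}(X')|}{n^{d+1}} = \lim_{n\to\infty}\frac{\log|L_{[1,n]^d}(X)|}{n^d} = h(X).
\]
Since $|A|$ is unchanged, $h(X') \geq (\log|A|)-\alpha$ as well.

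Next I would produce two distinct measures of maximal entropy on $X'$. For each $i \in \{1,2\}$, let $\tilde\mu_i$ be the measure on $A^{\mathbb{Z}^{d+1}}$ under which the horizontal slices $\{x|_{\mathbb{Z}^d \times \{k\}}\}_{k\in\mathbb{Z}}$ are i.i.d., each distributed according to $\mu_i$. Each $\tilde\mu_i$ is supported on $X'$, is clearly invariant under shifts in the $\vec{e}_{d+1}$ direction, and inherits invariance in the first $d$ directions from the $\mathbb{Z}^d$-invariance of $\mu_i$. By independence of layers,
\[
H_{\tilde\mu_i}([1,n]^{d+1}) = n \cdot H_{\mu_i}([1,n]^d),
\]
so $h(\tilde\mu_i) = h(\mu_i) = h(X) = h(X')$, meaning $\tilde\mu_i$ is a measure of maximal entropy on $X'$. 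Finally, the marginals of $\tilde\mu_1$ and $\tilde\mu_2$ on $\mathbb{Z}^d \times \{0\}$ are $\mu_1$ and $\mu_2$ respectively, so $\tilde\mu_1 \neq \tilde\mu_2$.

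Thus $X'$ witnesses $\alpha_{d+1} \leq \alpha$. Taking the infimum over admissible $\alpha$ gives $\alpha_{d+1} \leq \alpha_d$. There is no real obstacle here; the only mild care needed is to confirm that the layer-product construction indeed produces a measure of maximal entropy (which follows from the entropy computation above) and that distinctness of the $\mu_i$ forces distinctness of the $\tilde\mu_i$ (immediate from marginals).
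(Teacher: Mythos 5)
Your proposal is correct and follows essentially the same route as the paper: build the $\mathbb{Z}^{d+1}$ SFT by imposing no constraints in the new direction, and lift each measure of maximal entropy to an independent product over layers. The only difference is that you spell out the entropy computations that the paper leaves as "not hard to check."
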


\begin{proof}

Fix any $d$ and any $\epsilon > 0$. By definition, there exists a nearest neighbor $\mathbb{Z}^d$ SFT $X$ (with alphabet $A$) with $\mu_1 \neq \mu_2$ measures of maximal entropy for which $h(X) > (\log |A|) - \alpha_d - \epsilon$. We may then define $X^{\mathbb{Z}}$ to be the nearest neighbor $\mathbb{Z}^{d+1}$ SFT containing all $x \in A^{\mathbb{Z}^{d+1}}$ for which each $x|_{\mathbb{Z}^d \times \{j\}} \in X$. In other words, $X^{\mathbb{Z}}$ has the adjacency rules for $X$ in the $\vec{e_1}, \ldots, \vec{e_d}$-directions, and no restrictions at all in the $\vec{e_{d+1}}$-direction.

Then clearly $h(X^{\mathbb{Z}}) = h(X) > (\log |A|) - \alpha_d - \epsilon$. Also, it is not hard to check that $\mu_1^{\mathbb{Z}}$ and $\mu_2^{\mathbb{Z}}$ are measures of maximal entropy for $X^{\mathbb{Z}}$, where $\mu_i^{\mathbb{Z}}$ is the independent product of countably many copies of $\mu_i$ in the $\vec{e_{d+1}}$-direction. Therefore, $\alpha_{d+1} \leq \alpha_d + \epsilon$. Since $\epsilon$ was arbitrary, we are done.

\end{proof}

Theorem~\ref{mainresult2} implies that $\alpha_d$ decays at most polynomially; $\alpha_d \geq \beta_d = \frac{1}{3^6  2^{24} d^{17}}$. Our final result gives an upper bound on $\alpha_d$ which also decays polynomially.

\begin{theorem}\label{alpharate}
There exists a constant $B$ so that $\alpha_d \leq \frac{1}{\lfloor B d^{0.25} (\log d)^{-0.75} \rfloor}$ for all $d$.
\end{theorem}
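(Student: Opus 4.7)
The plan is to construct, for each sufficiently large $d$, an explicit nearest neighbor $\mathbb{Z}^d$ SFT $X_d$ with alphabet $A_d$ admitting multiple measures of maximal entropy and satisfying $\log|A_d| - h(X_d) \leq 1/N$, where $N = \lfloor B d^{1/4}(\log d)^{-3/4}\rfloor$. The appearance of the exponents $1/4$ and $3/4$ matches exactly the Galvin--Kahn critical threshold $\lambda_c(\mathbb{Z}^d) = O((\log d)^{3/4}/d^{1/4})$ for phase transition in the hard-core lattice gas on $\mathbb{Z}^d$, so the construction almost certainly builds on that threshold.

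At a high level, I would choose $X_d$ to have an alphabet partitioned into a large ``free'' subset of size comparable to $N$, plus a small set of ``particle''-type letters whose adjacency with themselves is restricted in a hard-core manner. Under such a factorization the MMEs of $X_d$ can be written as products of a Gibbs measure of an induced hard-core lattice gas at effective activity $\lambda \sim 1/N$ with the uniform distribution on free letters at empty sites. For $B$ small enough, $\lambda = 1/N$ exceeds $\lambda_c(\mathbb{Z}^d)$, so Galvin--Kahn gives two distinct extremal Gibbs measures, which lift to two distinct MMEs of $X_d$. The entropy bound would then be obtained by explicitly counting locally admissible configurations in large boxes, in the same spirit as the constructions in the proofs of Theorem~\ref{equiv} and Theorem~\ref{iceberg}.

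The principal obstacle is the sharp entropy estimate. A naive cluster expansion for the hard-core free energy gives $\log(1+\lambda) - f_{HC}(\lambda) = \Theta(d\lambda^2)$, so the naive hard-core inflation with $\lambda = 1/N \sim d^{-1/4}(\log d)^{3/4}$ produces an entropy deficit of order $d/N^2 \sim d^{1/2}(\log d)^{3/2}$---much larger than the target $1/N$. Overcoming this gap is the crux of the proof. It likely requires a refinement of the construction: for instance, making the hard-core-like adjacency constraint anisotropic (so it acts in only a bounded number of lattice directions rather than all $d$, reducing the ``per-pair'' cost from $d\lambda^2$ to $k\lambda^2$ for an effective dimension $k$), using a Widom--Rowlinson-style two-species structure whose critical activity matches better to the entropy bookkeeping, or exploiting cancellations at the critical activity that lie beyond the reach of the low-activity expansion.

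The $(\log d)^{3/4}$ factor in the target bound closely tracks the cut-counting entropy in the Galvin--Kahn Peierls argument, so preserving it through the proof will require carefully following the surface-area/energy balance in that argument as transferred to the SFT setting. Once the construction and its effective activity are set so that multiplicity of extremal Gibbs measures is guaranteed while the entropy deficit is controlled by $1/N$, the theorem follows by the definition of $\alpha_d$.
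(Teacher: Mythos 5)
Your starting point is right---the paper's construction is indeed the hard-core shift with the $0$ symbol split into $N$ safe symbols $0_1,\ldots,0_N$ (call it $\mathcal{H}_{N,d}$), so that the Galvin--Kahn phase transition at activity $\lambda=1/N>Cd^{-0.25}(\log d)^{0.75}$ produces two distinct Gibbs measures $\mu_e,\mu_o$ that lift to measures $\widehat{\mu_e},\widehat{\mu_o}$ with the uniform conditional probabilities of Proposition~\ref{P1}. But there are two genuine problems with your outline. First, $\mu_e$ and $\mu_o$ are only $(2\mathbb{Z})^d$-invariant, not shift-invariant (each is the unit translate of the other), so their lifts are \emph{not} measures of maximal entropy in the sense used here; the only shift-invariant combination, $\frac12(\widehat{\mu_e}+\widehat{\mu_o})$, is the \emph{unique} MME of $\mathcal{H}_{N,d}$. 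The paper's essential extra idea, which your proposal is missing, is to pass to the direct product $\mathcal{H}_{N,d}^2$ and take $\nu_1=\frac12(\widehat{\mu_o}\times\widehat{\mu_o}+\widehat{\mu_e}\times\widehat{\mu_e})$ and $\nu_2=\frac12(\widehat{\mu_o}\times\widehat{\mu_e}+\widehat{\mu_e}\times\widehat{\mu_o})$: both are shift-invariant, both have uniform conditional probabilities (hence are MMEs by Proposition 1.21 of \cite{BS1}, since $\mathcal{H}_{N,d}^2$ is strongly irreducible), and they differ because $\nu_1(x(0)=(1,1))=\frac12(\alpha^2+\beta^2)\neq\alpha\beta=\nu_2(x(0)=(1,1))$ where $\alpha=\mu_e(x(0)=1)\neq\beta=\mu_o(x(0)=1)$.

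Second, the ``principal obstacle'' you identify is illusory, and the workarounds you propose (anisotropic constraints, Widom--Rowlinson variants) would take you away from the Galvin--Kahn setting for no benefit. The expansion $\log(1+\lambda)-f_{HC}(\lambda)=\Theta(d\lambda^2)$ is a low-activity asymptotic valid only for $\lambda\ll 1/d$; at $\lambda\sim d^{-1/4}$ the system is at or above its phase transition and the typical configuration is near-crystalline, not a dilute gas. The correct (and trivial) bound comes from the checkerboard counting argument: placing arbitrary letters on even sites and only safe symbols on odd sites gives $h(\mathcal{H}_{N,d}^2)\geq\frac12\bigl(\log(N+1)^2+\log N^2\bigr)=\log N(N+1)$, hence $\log|A|-h(\mathcal{H}_{N,d}^2)\leq\log\bigl(1+\frac1N\bigr)<\frac1N$, which is exactly the claimed bound with $N=\lfloor C^{-1}d^{0.25}(\log d)^{-0.75}\rfloor$. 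So the entropy estimate is a one-line computation; the real work in the proof is the shift-invariance repair described above.
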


\begin{proof}
The main tool in our construction is a theorem of Galvin and Kahn (\cite{GK}) about phase transitions for the hard-core shift with activities. 
Specifically, define a Gibbs measure with activity $\lambda \in \mathbb{R}^+$ on the $\mathbb{Z}^d$ hard-core shift $\mathcal{H}_d$ to be a measure $\mu$ with the property that for any $n$ and any configurations $w$ with shape $S$ and $\delta$ with shape $\partial S$ such that $w\delta \in L(\mathcal{H}_d)$,
\[
\mu([w] \ | \ [\delta]) = \frac{\lambda^{\# \textrm{ ones in } w}}{\sum_{v \in \{0,1\}^{S} \textrm{ s.t. } v\delta \in L(\mathcal{H}_d)} \lambda^{\# \textrm{ ones in } v}}.
\]
In other words, the conditional probability of $w$ given a fixed $\delta$ is proportional to $\lambda^{\# \textrm{ ones in } w}$. One way to create Gibbs measures is to fix any boundary conditions $\delta_n \in L_{\partial [-n,n]^d}(\mathcal{H}_d)$ on larger and larger cubes $[-n,n]^d$, and take a weak limit point of the sequence of conditional measures $\mu([x|_{[-n,n]^d}] \ | \ [\delta])$. For the hard-core model, two boundary conditions of interest are $\delta_{e,n}$, which contains $1$ on all odd $t \in \partial [-n,n]^d$ ($\sum t_i$ odd) and $0$ on all even $t \in \partial [-n,n]^d$  ($\sum t_i$ even), and $\delta_{o,n}$, which does the reverse.

The main result of \cite{GK} states that there exists a universal constant $C$ with the following property: for any dimension $d$ and activity level $\lambda$ which is greater than $C d^{-0.25} (\log d)^{0.75}$, the sequences of conditional measures $\mu([x_{[-n,n]^d}] \ | \ [\delta_{e,n}])$ and $\mu([x_{[-n,n]^d}] \ | \ [\delta_{o,n}])$ approach respective weak limits $\mu_e$ and $\mu_o$, which are distinct Gibbs measures with activity $\lambda$ on $\mathcal{H}_d$. Though it is not explicitly stated in \cite{GK}, it is well-known that each of $\mu_o$ and $\mu_e$ is a shift by one unit in any cardinal direction of the other, and in particular that both $\mu_o$ and $\mu_e$ are invariant under any shift in $(2\mathbb{Z})^d$. (This is mentioned in, among other places, \cite{vdBS}.) The strategy used in \cite{GK} to show that $\mu_o \neq \mu_e$ is quite explicit: it is shown that for $\lambda$ satisfying the hypothesis of the theorem, $\mu_e(x(0) = 1) < \mu_o(x(0) = 1)$. 

We now define $\mathcal{H}_{N,d}$ to be the nearest-neighbor $\mathbb{Z}^d$ SFT with $N$ safe symbols $\{0_1, \ldots, 0_N\}$ and a symbol $1$ which cannot appear next to itself in any cardinal direction; $\mathcal{H}_{N,d}$ is a version of the usual hard-core shift where the symbol $0$ has been ``split'' into $N$ copies. For any Gibbs measure $\mu$ on the hard-core model $\mathcal{H}_d$ with activity $\lambda = \frac{1}{N}$, define a measure $\hat{\mu}$ on $\mathcal{H}_{N,d}$ by ``splitting'' the measure of any cylinder set $[w]$ uniformly over all $\displaystyle N^{\# \ \textrm{zeroes in } w}$ ways of assigning subscripts to $0$ symbols in $w$. It is easily checked that any such measure $\hat{\mu}$ has the uniform conditional probabilities property from the conclusion of Proposition~\ref{P1} (Proposition 1.20 from \cite{BS1}), which stated that all measures of maximal entropy have this property. In fact, Proposition 1.21 from \cite{BS1} gives a partial converse: for strongly irreducible SFTs, any shift-invariant measure with uniform conditional probabilities must be a measure of maximal entropy.

Since $\mathcal{H}_{N,d}$ is clearly strongly irreducible, this would show that $\widehat{\mu_e}$ and $\widehat{\mu_o}$ are measures of maximal entropy on $\mathcal{H}_{N,d}$, were it not for the fact that these measures are not shift-invariant. However, their average $\frac{1}{2}(\widehat{\mu_e} + \widehat{\mu_o})$ clearly shares the uniform conditional probability property, and is shift-invariant, and so is a measure of maximal entropy on $\mathcal{H}_{N,d}$. In fact, it is the unique measure of maximal entropy on $\mathcal{H}_{N,d}$. We will show, however, that the direct product of $\mathcal{H}_{N,d}$ with itself can have multiple measures of maximal entropy.

Define the nearest neighbor $\mathbb{Z}^d$ SFT $\mathcal{H}_{N,d}^2$ with alphabet $\{(a,b) \ : \ a,b \in \{0_1,\ldots,0_N,1\}\}$, where the adjacency rules from $\mathcal{H}_{N,d}$ are separately enforced in each coordinate. In other words, $(0_1, 1)$ may appear next to $(1, 0_2)$ since $0_1 1$ and $1 0_2$ are each legal in $\mathcal{H}_{N,d}$, but $(0_1, 1)$ cannot appear next to $(1,1)$: though $0_1 1$ is legal in $\mathcal{H}_{N,d}$, $1 1$ is not. Define the measures $\nu_1 := \frac{1}{2}(\widehat{\mu_o} \times \widehat{\mu_o} + \widehat{\mu_e} \times \widehat{\mu_e})$ and $\nu_2 := \frac{1}{2}(\widehat{\mu_o} \times \widehat{\mu_e} + \widehat{\mu_e} \times \widehat{\mu_o})$ on $\mathcal{H}_{N,d}^2$. It should be obvious that both $\nu_1$ and $\nu_2$ have the uniform conditional probability property mentioned above and that both are shift-invariant. Since $\mathcal{H}_{N,d}^2$ is strongly irreducible (it still has safe symbols, for instance $(0_1, 0_1)$), both $\nu_1$ and $\nu_2$ are therefore measures of maximal entropy on $\mathcal{H}_{N,d}^2$.

We claim that $\nu_1 \neq \nu_2$ as long as $N < C^{-1} d^{0.25} (\log d)^{-0.75}$. If this condition holds, then $\frac{1}{N} > C d^{-0.25} (\log d)^{0.75}$, and so by \cite{GK}, $\mu_e(x(0) = 1) < \mu_o(x(0) = 1)$ for the hard-core shift with activity $\lambda = \frac{1}{N}$. Clearly, $\widehat{\mu_e}(x(0) = 1) = \mu_e(x(0) = 1)$ and $\mu_o(x(0) = 1) = \widehat{\mu_o}(x(0) = 1)$, so $\widehat{\mu_e}(x(0) = 1) < \widehat{\mu_o}(x(0) = 1)$. For brevity, denote these probabilities by $\alpha$ and $\beta$ respectively. By definition, $\nu_1(x(0) = (1,1)) = \frac{1}{2}(\alpha^2 + \beta^2)$ and $\nu_2(x(0) = (1,1)) = \frac{1}{2}(\alpha \beta + \alpha \beta)$. These are equal if and only if $\alpha = \beta$, which is not the case. Therefore, $\nu_1 \neq \nu_2$ and $\mathcal{H}_{N,d}^2$ has multiple measures of maximal entropy.

Our final observation is that any way of placing arbitrary letters on even sites and only pairs $(0_i, 0_j)$ on odd sites yields configurations in $L(\mathcal{H}_{N,d}^2)$, and so $h(\mathcal{H}_{N,d}^2) \geq \frac{1}{2}(\log (N+1)^2 + \log N^2) = \log N(N+1)$. Therefore, for this SFT, $\log |A| - h(\mathcal{H}_{N,d}^2) \leq \log (N+1)^2 - \log N(N+1) = \log \left( 1 + \frac{1}{N} \right) < \frac{1}{N}$. Choosing $B = C^{-1}$ and $N = \lfloor C^{-1} d^{0.25} (\log d)^{-0.75} \rfloor$ now completes our proof.

\end{proof}

Just the fact that $\alpha_d \rightarrow 0$ provides some quantification of the commonly believed heuristic that it should get easier, not harder, to have multiple measures of maximal entropy as $d \rightarrow \infty$, as there are more paths in which information can communicate. 

Our bounds on $\alpha_d$ are then $O(d^{-17}) < \alpha_d < d^{-0.25 + o(1)}$. We imagine that the true values of $\alpha_d$ are closer to the upper bounds than the lower, but have no conjectures as to the exact rate.

\section*{Acknowledgments}\label{acks}
The author would like to acknowledge the hospitality of the University of British Columbia, where much of this work was completed.

\end{document}